
\documentclass{amsart}
\usepackage{amssymb,latexsym,mathtools}
\usepackage{mathrsfs}
\usepackage{mathtools}
\usepackage{graphicx}
\usepackage{esint}
\usepackage{braket}
\usepackage{url, hyperref}
\newtheorem{theorem}{Theorem}
\newtheorem{lemma}{Lemma}
\newtheorem{definition}{Definition}

\newtheorem{prop}{Proposition}

\newtheorem{remark}{Remark}

\begin{document}

\title[Convergence for HJ Equations on Junctions]{Convergence \& Rates for \\
Hamilton-Jacobi Equations with Kirchoff Junction Conditions}
\author[P.~Morfe]{Peter S. Morfe}
\address{Department of Mathematics \\ 
University of Chicago \\ 
Chicago, IL 60637 \\ USA}
\email{\href{mailto:pmorfe@math.uchicago.edu}{pmorfe@math.uchicago.edu}}

\date{\today}

\subjclass{35F20, 65N12, 65M12}

\keywords{Hamilton-Jacobi equations, junction problems, stratification problems, vanishing viscosity limit, monotone finite difference schemes}

\begin{abstract} 
We investigate rates of convergence for two approximation schemes of time-independent and time-dependent Hamilton-Jacobi equ-ations with Kirchoff junction conditions.  We analyze the vanishing viscosity limit and monotone finite-difference schemes.  Following recent work of Lions and Souganidis, we impose no convexity assumptions on the Hamiltonians.  For stationary Hamilton-Jacobi equations, we obtain the classical $\epsilon^{\frac{1}{2}}$ rate, while we obtain an $\epsilon^{\frac{1}{7}}$ rate for approximations of the Cauchy problem.  In addition, we present a number of new techniques of independent interest, including a quantified comparison proof for the Cauchy problem and an equivalent definition of the Kirchoff junction condition.      
\end{abstract}

\maketitle

\section{Introduction}

The goal of the present paper is to study rates of convergence for approximations of Hamilton-Jacobi equations on junctions with Kirchoff conditions.  

Given $K$ copies $\{I_{1},\dots,I_{K}\}$ of the interval $(-\infty,0)$, we define the junction as the disjoint union $\mathcal{I} := \bigcup_{i = 1}^{K} \overline{I_{i}}$ glued at zero.  The equations of interest to us are the stationary equation
\begin{equation} \label{E: stationary}
	\left\{
		\begin{array}{r l}
			u + H_{i}(x,u_{x_{i}}) =0 & \text{in} \, \, I_{i} \\
			\sum_{i = 1}^{K} u_{x_{i}} = B & \text{on} \, \, \{0\}
		\end{array}
	\right.
\end{equation}
and the Cauchy problem
\begin{equation} \label{E: time}
	\left\{ \begin{array}{r l}
		u_{t} + H_{i}(t,x,u_{x_{i}}) = 0 		& 		\quad \text{in} \, \, I_{i} \times (0,T)	\\
		\sum_{i = 1}^{K} u_{x_{i}} = B 		& 		\quad \text{on} \, \, \{0\} \times (0,T) \\
		u= u_{0} 				& 		\quad \text{on} \, \, \mathcal{I} \times \{0\}
	\end{array} \right.
\end{equation}
where the equations are understood in the viscosity sense (cf.\ \cite{time-independent,time-dependent}) and $B \in \mathbb{R}$ is a constant.  

We study two approximations of \eqref{E: stationary} and \eqref{E: time}, namely the vanishing viscosity limit and the finite-difference approximation, and prove that they converge with algebraic rates.

The vanishing viscosity approximations of \eqref{E: stationary} and \eqref{E: time} are given by, respectively,
\begin{equation} \label{E: viscous_stationary}
	\left\{
		\begin{array}{r l}
			u^{\epsilon} - \epsilon u^{\epsilon}_{x_{i} x_{i}} + H_{i}(x,u^{\epsilon}_{x_{i}}) =0 & \text{in} \, \, I_{i} \\
			\sum_{i = 1}^{K} u^{\epsilon}_{x_{i}} = B & \text{on} \, \, \{0\}
		\end{array}
	\right.
\end{equation}
and
\begin{equation} \label{E: viscous_time}
	\left\{ \begin{array}{r l}
		u^{\epsilon}_{t} - \epsilon u^{\epsilon}_{x_{i} x_{i}} + H_{i}(t,x,u^{\epsilon}_{x_{i}}) = 0 		& 		\quad \text{in} \, \, I_{i} \times (0,T)	\\
		\sum_{i = 1}^{K} u^{\epsilon}_{x_{i}} = B 		& 		\quad \text{on} \, \, \{0\} \times (0,T) \\
		u^{\epsilon} = u_{0} 				& 		\quad \text{on} \, \, \mathcal{I} \times \{0\}
	\end{array} \right.
\end{equation}
In the Euclidean setting, it is well known that, as $\epsilon \to 0^{+}$, $u^{\epsilon} \to u$ uniformly with an error that is on the order of $\epsilon^{\frac{1}{2}}$ (cf.\ \cite{old_paper}).  

For the Kirchoff junction problems \eqref{E: viscous_stationary} and \eqref{E: viscous_time}, we establish the following two results:

\begin{theorem} \label{T: rate_stationary_visc}  Assume the Hamiltonians $H_{1},\dots,H_{K}$ satisfy \eqref{As: continuity}, \eqref{As: coercive}, and \eqref{As: sub-sup}.  For each $\epsilon > 0$, let $u^{\epsilon}$ denote the unique bounded solution of \eqref{E: viscous_stationary}, and let $L = \sup \left\{\text{Lip}(u^{\epsilon}) \, \mid \, \epsilon > 0\right\}$.  There is a constant $C > 0$ depending only on $L$, the constant $M$ from \eqref{As: sub-sup}, and the Lipschitz constant of each Hamiltonian $H_{i}$ in $\overline{I_{i}} \times [-L,L]$ such that if $u$ is the unique bounded viscosity solution of \eqref{E: stationary}, then
\begin{equation*}
\sup \left\{ |u^{\epsilon}(x) - u(x)|\, \mid \, x \in \mathcal{I}\right\} \leq C \epsilon^{\frac{1}{2}}.
\end{equation*}
\end{theorem}    

\begin{theorem} \label{T: rate_cauchy_visc}  Assume \eqref{As: continuity}, \eqref{As: coercive}, \eqref{As: time_bound}, \eqref{As: sub-sup}, and \eqref{As: data_lip}.  For each $\epsilon > 0$, let $u^{\epsilon}$ denote the unique uniformly continuous solution of \eqref{E: viscous_time}.  For each $K \geq 1$, there is a constant $L_{K,T}$ depending only on $\text{Lip}(u_{0})$, $T$, and $K$ and a constant $C_{K} > 0$ depending only on $K$, $L_{K,T}$, $T$, and the Lipschitz constant of each Hamiltonian $H_{i}$ in $[0,T] \times \overline{I_{i}} \times [-(L_{K,T} + 1),L_{K,T} + 1]$ such that if $\epsilon \in (0,K]$ and $u$ denotes the unique uniformly continuous viscosity solution of \eqref{E: time}, then 
\begin{equation*}
\sup \left\{ |u^{\epsilon}(x,t) - u(x,t)| \, \mid \, (x,t) \in \mathcal{I} \times [0,T]\right\} \leq C_{K} \epsilon^{\frac{1}{7}}.
\end{equation*}
\end{theorem}

We also consider finite-difference schemes that approximate equations \eqref{E: stationary} and \eqref{E: time}.  We discretize the junction by gluing together $K$ discretized edges $J_{1},\dots,J_{K}$ at spatial scale $\Delta x$, and we discretize time similarly.  The finite-difference schemes considered in this paper take the form:
\begin{equation} \label{E: stationary_diff}
\left\{ \begin{array}{r l}
		U + F_{i}(D^{+}U, D^{-}U) = 0 &  \text{in} \, \, J_{i} \setminus \{0\} \\
		U(0) = \frac{1}{K} \sum_{i = 1}^{K} U(1_{i}) - B
\end{array} \right.
\end{equation}
and
\begin{equation} \label{E: cauchy_diff}
\left\{ \begin{array}{r l}
		D_{t}U + F_{i}(D^{+}U,D^{-}U) = 0 &  \text{in} \, \, (J_{i} \setminus \{0\}) \times \{1,\dots,N\} \\
		U(0,\cdot) = \frac{1}{K} \sum_{i = 1}^{K} U(1_{i},\cdot) - B &  \text{on} \, \,\{1,2,\dots,N\} \\
		U(\cdot,0) = U_{0} & \text{on} \, \, J_{i} \times \{0\}
\end{array} \right.
\end{equation}
Here $\{F_{1},\dots,F_{K}\}$ are operators that approximate the Hamiltonians, the points $1_{i}$ are the nearest neighbors of $0$ in each edge $J_{i}$, and $U_{0}$ is the restriction of the initial condition $u_{0}$ to the numerical grid.  The operators $\{F_{1},\dots,F_{K}\}$ are defined below in equation \eqref{E: F_op}, precise definitions of the difference quotient operators $D_{t},D^{+},D^{-}$ can be found in equations \eqref{E: diff} and \eqref{E: diff_time}, and the numerical grid is defined in Subsections \ref{S: stationary_diff_explanation} and \ref{S: cauchy_diff_explanation}.  

Error analysis of finite-difference schemes for Hamilton-Jacobi equations goes back to \cite{old_paper}, where an estimate on the order of $\Delta x^{\frac{1}{2}}$ was obtained.  As in the vanishing viscosity case, we derive the following two results:

\begin{theorem} \label{T: rate_stationary_diff} Assume \eqref{As: continuity}, \eqref{As: coercive}, \eqref{As: sub-sup}, \eqref{As: standard}, \eqref{As: num_Ham_lip}, \eqref{As: num_Ham_consistent}, \eqref{As: CFL_stationary}, and \eqref{As: cut_off_bound}.  There is a constant $C > 0$ depending only on the constant $M$ from \eqref{As: sub-sup}, the constants $L_{G}$ from \eqref{As: LG_constant} and $L_{2}$ from \eqref{As: CFL_stationary}, the constant $\overline{L}_{c}$ defined in Theorem \ref{T: stationary_scheme}, and the Lipschitz constant of each Hamiltonian $H_{i}$ in $\overline{I_{i}} \times [-(\overline{L}_{c} + 1),\overline{L}_{c} + 1]$ such that if $u$ is the unique bounded viscosity solution of \eqref{E: stationary} and $U$ is the unique bounded solution of \eqref{E: stationary_diff}, then
\begin{equation*}
\sup \left\{ |U(m) - u(-m\Delta x)| \, \mid \, m \in \mathcal{J} \right\} \leq C \Delta x^{\frac{1}{2}}.
\end{equation*}
\end{theorem}  

\begin{theorem} \label{T: rate_cauchy_diff}  Assume \eqref{As: continuity}, \eqref{As: coercive}, \eqref{As: time_bound}, \eqref{As: sub-sup}, \eqref{As: data_lip}, \eqref{As: standard}, \eqref{As: num_Ham_lip}, \eqref{As: num_Ham_consistent}, \eqref{As: CFL_cauchy}, and \eqref{As: cut_off_bound_time}.  There is a constant $C > 0$ depending only on the constants $L_{G}$ from \eqref{As: LG_constant} and $L_{2}$ from \eqref{As: CFL_cauchy}, the constant $\tilde{L}_{c}$ defined in Proposition \ref{P: monotone_time},  and the Lipschitz constant of each Hamiltonian $H_{i}$ in $[0,T] \times \overline{I_{i}} \times [-(\tilde{L}_{c} + 1),\tilde{L}_{c} + 1]$ such that if $u$ is the unique uniformly continuous viscosity solution of \eqref{E: stationary} and $U$ is the unique solution of \eqref{E: cauchy_diff}, then
\begin{equation*}
\sup \left\{|U(m,s)- u(-m\Delta x, s \Delta t)| \, \mid \, (m,s) \in \mathcal{J} \times S \right\}| \leq C \Delta x^{\frac{1}{7}}.
\end{equation*}
\end{theorem}  

In addition to the error analysis and auxiliary technical results, we give a largely complete presentation of the well-posedness results from \cite{time-dependent}.  We do this, at the expense of some repetition, for the convenience of the reader and to demonstrate how to quantify the uniqueness proof from \cite{time-dependent}.  We also prove existence of solutions of \eqref{E: viscous_time} and related estimates.

\subsection{Ideas and difficulties}  The error analysis of approximations of \eqref{E: stationary} relies on an insight from \cite{time-independent}.  This part is calculus.  Specifically, if $u$ is a function on $[-1,0]$, $u'(0)$ exists, $\theta, \delta > 0$ are parameters, and $y \in [-1,0]$, then one can prove that the function 
\begin{equation*}
x \mapsto u(x) - \frac{(x - y)^{2}}{2 \theta} - (u'(0) + \delta)(x - y)
\end{equation*} cannot attain its maximum at $0$.

The previous fact enables us to double variables when studying approximation schemes.  Consider the vanishing viscosity limit for simplicity.  The basic difficulty compared to the Euclidean setting is the junction condition.  Upon reflection, even if things were smooth, we realize difficulties arise if some of the maximum points of the test function occur at the junction.  However, we can get around this using the observation of \cite{time-independent}.  In the error analysis, we double variables by studying maxima of the function 
\begin{equation*}
\Phi_{i,\delta}(x,y) = u^{\epsilon}(x) - u(y) - \frac{(x - y)^{2}}{2 \sqrt{\epsilon}} - (u^{\epsilon}_{x_{i}}(0) + \delta) (x - y).
\end{equation*}  
If $(x_{i}(\delta),y_{i}(\delta))$ maximizes $\Phi_{i,\delta}$, then, in particular, $x_{i}(\delta)$ maximizes $x \mapsto \Phi_{i,\delta}(x,y_{i}(\delta))$, and the previous observation shows $x_{i}(\delta) < 0$.  Therefore, we can write down the equation solved by $u^{\epsilon}$ in the interior of $I_{i}$ and hope to use this to get a bound on $u^{\epsilon} - u$.  Further reflection leads us to realize, then, that the only trouble occurs if $y_{i}(\delta) = 0$ independently of $i$.  However, since the flux $\sum_{i = 1}^{K} \left(u^{\epsilon}_{x_{i}}(0) + \delta + \frac{x_{i}(\delta)}{\sqrt{\epsilon}}\right)$ is no larger than $K \delta$, we prove below that there is a continuity property of the Kirchoff condition that allows us to find a $j \in \{1,\dots,K\}$ such that
\begin{equation*}
u(y_{j}(\delta)) + H_{j} \left(y_{j}(\delta),\frac{x_{j}(\delta)}{\sqrt{\epsilon}} + u^{\epsilon}_{x_{j}}(0) + \delta\right) \geq -\omega(K \delta),
\end{equation*}
where $\omega$ is the modulus of continuity of $H_{j}$.  Combining this with the equation solved by $u^{\epsilon}$ at $x_{j}(\delta)$ and sending $\delta \to 0^{+}$, we prove below that, in this case, $u^{\epsilon}(x_{j}(\delta)) - u(y_{j}(\delta))$ is small.  

The remainder of the proof uses the topology of the junction and the Lipschitz continuity of the solutions.  Since $y_{i}(\delta) = 0$ and, at least formally, $x_{i}(\delta)$ must be quite close it, $u^{\epsilon}(x_{i}(\delta)) - u(y_{i}(\delta))$ is certainly close to $u^{\epsilon}(0) - u(0)$ by continuity.  Therefore, since $(x_{i}(\delta),y_{i}(\delta))$ maximizes $\Phi_{i,\delta}$ in $\overline{I_{i}}$ no matter the choice of $i$, and $0$ is an element of each ray $\overline{I_{i}}$, the smallness of $u^{\epsilon}(x_{j}(\delta)) - u(y_{j}(\delta))$ forces $u^{\epsilon} - u$ to be small in $\mathcal{I}$.

Our error analysis of approximations of the Cauchy problem \eqref{E: time} is significantly more complicated.  This reflects the fact that the proof of the comparison principle for \eqref{E: time} is more delicate than that of \eqref{E: stationary}.  The major step in the comparison proof presented in \cite{time-dependent} is the reduction to a stationary equation with a Kirchoff junction condition using a blow-up argument.  In order to obtain error estimates, we show in this paper that it is possible to reduce to a stationary equation without completely blowing-up the solutions at the junction.  Instead of performing a blow-up, we study the difference of the solutions at a small but positive scale near the junction.  To do this, we need to quantify the moduli of continuity of the time derivatives of the solutions.  This step is the major contributor to the error and the reason it differs from the classical rate.

\subsection{Assumptions}  In the statement of the assumptions concerning the Hamiltonians, we will write $H_{i} = H_{i}(t,x,p)$ with the understanding that, in the time-independent equations \eqref{E: stationary} and \eqref{E: viscous_stationary}, $H_{i,t}(t,x,p) \equiv 0$.  We assume that, for each $i$ and $R > 0$,
\begin{equation}
H_{i} : [0,T] \times \overline{I_{i}} \times B(0,R) \to \mathbb{R}  \, \, \text{is uniformly Lipschitz continuous,} \label{As: continuity}\\
\end{equation}
and
\begin{equation}
\lim_{|p| \to \infty} H_{i}(t,x,p) = \infty \, \, \text{uniformly with respect to} \, \, (x,t).\label{As: coercive}
\end{equation}
Additionally, we assume there is a $D > 0$ such that, for each $i$,
\begin{equation} \label{As: time_bound}
|H_{i}(t,x,p) - H_{i}(s,y,p)| \leq D |t -s| \quad \text{if} \, \, (x,t,p), (s,y,p) \in \overline{I_{i}} \times [0,T] \times \mathbb{R} .
\end{equation}
Finally, we assume that
\begin{equation} \label{As: sub-sup}
M := \sup\left\{|H_{i}(t,x,0)| \, \mid \, (x,t) \in \overline{I_{i}} \times [0,T], \, \, i \in \{1,2,\dots,K\}\right\} < \infty.
\end{equation}

Regarding the initial datum in \eqref{E: time}, we only require that
\begin{equation} \label{As: data_lip}
u_{0} \in \text{Lip} \left(\mathcal{I}\right),
\end{equation}
where $\text{Lip}(\mathcal{I})$ denote the spaces of (possibly unbounded) uniformly Lipschitz functions on $\mathcal{I}$.  (The topology we put on $\mathcal{I}$ is made precise in Subsection \ref{S: notation}.)

In this paper, assumptions \eqref{As: sub-sup} and \eqref{As: time_bound} are used in order to obtain uniform Lipschitz bounds, which, in turn, are used in the derivation of error estimates.  It is known that without a quantitative assumption like \eqref{As: time_bound}, it may not be possible to prove space-time Lipschitz estimates for solutions of the associated HJ equations.  See the counter-example in \cite[Section 5]{cardaliaguet cannarsa}.  

Concerning the network geometry, we only restrict to unbounded edges so as to avoid addressing questions related to boundary layers at the other ends.  In fact, the techniques of this paper carry over to the analysis of Hamilton-Jacobi equations on finite networks with a combination of Kirchoff junction conditions, Dirichlet conditions, and (generalized) Neumann conditions provided the solutions satisfy the Dirichlet boundary conditions classically.  For Hamilton-Jacobi equations on network geometries more general than junctions, see, for instance, \cite{actual networks}, \cite{Imbert}, \cite{illustrated_barles}, and the references therein.  

In what follows, we always assume that $B = 0$ in \eqref{E: stationary} and \eqref{E: time}.  However, the proofs still work with minor modifications for arbitrary choices of $B \in \mathbb{R}$.  


\subsection{Related Work}  The well-posedness of \eqref{E: stationary} and \eqref{E: time} was recently established by Lions and Souganidis in \cite{time-independent,time-dependent}.  In addition to establishing comparison for these equations for general (non-convex) Hamiltonians, they showed that HJ equations with Kirchoff junction conditions include as a special case the so-called flux-limited Hamilton-Jacobi equations introduced by Imbert and Monneau \cite{Imbert} in the setting of convex and quasi-convex Hamiltonians.  

For a comprehensive discussion of the various notions of solutions for HJ equations on junctions, including Kirchoff conditions and flux-limiters and the relations between them, see the book \cite{illustrated_barles} by Barles and Chasseigne.  \cite{illustrated_barles} also contains a presentation of the proof of \cite{time-dependent} in which the blow-up argument is treated in essentially the same manner as is done here.  

Error analysis of finite-difference schemes approximating flux-limited HJ equations with quasi-convex Hamiltonians was already conducted by Guerand and Koumaiha in \cite{flux rate}.  They obtained the $\epsilon^{\frac{1}{2}}$ rate when the equation is strictly flux-limited, and an $\epsilon^{\frac{2}{5}}$ rate in general.  Their approach relies heavily on a so-called vertex test function, which is used in place of the traditional quadratic term in a variable doubling argument.  The test function is specifically adapted to the Hamiltonian and the convexity of the latter is used in a fundamental way.

Finally, we note that there are similarities between the Kirchoff junction condition and Neumann boundary conditions, and these similarities are exposed in the present work.  Most notably, we rely on a continuity property of the junction condition that was first recognized by Lions in \cite{neumann} in the context of HJ equations with Neumann boundary conditions.  We refer to Appendix \ref{A: reformulation} for this continuity property, which can be phrased as an equivalent definition of the Kirchoff condition.  In the setting of HJ equations with Neumann boundary conditions, the point of view of Lions was used by Rouy to obtain the $\epsilon^{\frac{1}{2}}$ convergence rate for finite-difference schemes in \cite{Rouy}.  This initially inspired our idea to reformulate the Kirchoff condition. 

\subsection{Outline}  The paper is divided into three parts.  In the first part, we repeat the well-posedness results of \cite{time-dependent}, showing that the equations are well-posed and demonstrating how to quantify the blow-up argument in the time-dependent case.  The second part is devoted to the error analysis.  Finally, the third part consists of appendices in which we provide the details for a number of technical results that were used, including the reformulation of the Kirchoff condition.

The well-posedness of the time-independent problems is treated in Section \ref{S: stationary_comparison}.  The corresponding results for the Cauchy problems are addressed in Section \ref{S: time_comparison}.  

Sections \ref{S: stationary_problem} and \ref{S: cauchy_problem} are devoted to the error estimates for the vanishing viscosity approximation in the time-independent and time-dependent settings, respectively.  Sections \ref{S: stationary_diff} and \ref{S: cauchy_problem_diff} discuss the error estimates for the finite-difference schemes.  

The finite-difference schemes \eqref{E: stationary_diff} and \eqref{E: cauchy_diff} are introduced in Subsections \ref{S: stationary_diff_explanation} and \ref{S: cauchy_diff_explanation}, respectively, while the details regarding their well-posedness are provided in Subsection \ref{S: stationary_fd_explanation} and Appendix \ref{A: cauchy_fd_explanation}.  

The reformulation of the Kirchoff junction condition that implies the continuity property mentioned above is presented in Appendix \ref{A: reformulation}.   The viscosity theoretic result that allows us to quantify the blow-up argument is discussed in Appendix \ref{A: dimensionality_reduction}.  The existence of solutions of the Cauchy problems \eqref{E: time} and \eqref{E: viscous_time} is proved in Appendix \ref{A: cauchy_existence}.  Finally, in Appendix \ref{A: purely_technical}, we give a proof of a technical result used in \cite{time-dependent} and in Section \ref{S: stationary_comparison}.

\subsection{Notation and Conventions} \label{S: notation}  We let $d : \mathcal{I}^{2} \to [0,\infty)$ be the metric on the network given by 
\begin{equation*}
d(x,y) = \left\{ \begin{array}{r l}
			|x - y|, & \text{if} \, \, x,y \in \overline{I_{i}} \, \, \text{for some} \, \, i, \\
			|x| + |y|, & \text{otherwise}.
			\end{array} \right.
\end{equation*}
In what follows, the topology on $\mathcal{I}$ is always the one induced by $d$.  In particular, $C(\mathcal{I})$ is the space of (possibly unbounded) functions defined in $\mathcal{I}$ that are continuous with respect to $d$.  $UC(\mathcal{I})$ is the subspace consisting of functions that are uniformly continuous in $\mathcal{I}$.

We put the product topology on $\mathcal{I} \times [0,T]$ obtained from the $d$-metric topology on $\mathcal{I}$ and the standard Euclidean topology on $[0,T]$.  $C(\mathcal{I} \times [0,T])$ will be used to denote the space of (possibly unbounded) continuous functions on $\mathcal{I} \times [0,T]$, and $UC(\mathcal{I} \times [0,T])$, the subspace of functions uniformly continuous in $\mathcal{I} \times [0,T]$.  

In Section \ref{S: cauchy_problem} and Appendix \ref{A: cauchy_existence}, it will frequently be convenient employ the abbreviation $I_{i}^{\delta} = I_{i} \cap (-\delta,0)$ for a given $\delta > 0$.  We also write $\overline{I_{i}^{\delta}} = I_{i} \cap [-\delta,0]$.  In this case, to stress the dependence on $i$, it is convenient to denote by $-\delta_{i}$ the point with coordinate $-\delta$ in $\overline{I^{\delta}_{i}}$.

If $\varphi \in C(\mathcal{I})$ and $x \in I_{i}$ for some $i \in \{1,2,\dots,K\}$, we let $\varphi_{x_{i}}(x)$ and $\varphi_{x_{i}x_{i}}(x)$ denote the first and second derivatives of $\varphi$ at $x$ with respect to the differential structure on $I_{i}$ inherited from the real line, provided they exist.  When $x = 0$, these should be understood as the one-sided derivatives of $\varphi$ at $0$ in $I_{i}$.  In this paper, we will always write $\varphi_{x_{i}}(0)$ for this one-sided derivative and never use the notation $\varphi_{x_{i}}(0^{-})$.  

For $k \in \mathbb{N}$, we denote by $C^{k}(\mathcal{I})$ the space of functions $\varphi \in C(\mathcal{I})$ such that, for each $i$, $\varphi$ restricts to a $C^{k}$-function on $\overline{I_{i}}$.  Note that if $\varphi \in C^{k}(\mathcal{I})$, then, in general, $\varphi_{x_{i}}(0) \neq \varphi_{x_{j}}(0)$ if $i \neq j$, and the same can be said of second derivatives when $k = 2$.  A prototypical example of a function in $C^{1}(\mathcal{I})$ is given by $\varphi(x) = 2x$ in $\overline{I_{1}}$ and $\varphi(x) = 0$, otherwise. 

Similarly, we denote by $C^{k,1}(\mathcal{I} \times [0,T])$ the space of functions $\varphi \in C(\mathcal{I} \times [0,T])$ such that, for each $x \in \mathcal{I}$, $\varphi(x,\cdot) \in C^{1}([0,T])$, and, for each $t \in [0,T]$, $\varphi(\cdot, t) \in C^{k}(\mathcal{I})$.  

We denote by $\text{Lip}(\mathcal{I})$ the space of continuous functions $u \in UC(\mathcal{I})$ such that $\text{Lip}(u) := \sup \left\{\frac{|u(x) - u(y)|}{d(x,y)} \, \mid \, x, y \in \mathcal{I}, \, \, x \neq y \right\} < \infty$.  Similarly, $\text{Lip}(\mathcal{I} \times [0,T])$ is the space of continuous functions $u \in UC(\mathcal{I} \times [0,T])$ such that 
	\begin{equation*}
		\text{Lip}(u) := \sup \left\{\frac{|u(x,t) - u(y,s)|}{d(x,y) + |t - s|} \, \mid \, (x,t), (y,s) \in \mathcal{I} \times [0,T], (x,t) \neq (y,s)\right\} < \infty.
	\end{equation*}  

Recall that a function $f : X \to \mathbb{R}$ defined on a topological space $X$ is upper (resp.\ lower) semi-continuous if for each $\alpha \in \mathbb{R}$, the set $\{x \in \mathcal{I} \, \mid \, f(x) < \alpha\}$ (resp.\ $\{x \in \mathcal{I} \, \mid \, f(x) > \alpha\}$) is open in $X$.  We denote by $\text{USC}(\mathcal{I})$ the space of upper semi-continuous functions on $\mathcal{I}$, and by $\text{LSC}(\mathcal{I})$, the space of lower semi-continuous functions.  $\text{USC}(\mathcal{I} \times [0,T])$ and $\text{LSC}(\mathcal{I} \times [0,T])$ denote the respective spaces on $\mathcal{I} \times [0,T]$.

In the construction of solutions of the Cauchy problems \eqref{E: time} and \eqref{E: viscous_time}, it will be convenient to introduce a number of semi-norms.  Given $\alpha \in (0,1]$, $i \in \{1,2,\dots,K\}$, and a function $u : \overline{I_{i}} \times [0,T] \to \mathbb{R}$, we define the semi-norms $[u]_{i,\alpha}$ and $[u]_{i,0}$ by
	\begin{align*}
		[u]_{i,\alpha} &= \sup \left\{ \frac{u(x,t) - u(y,s)}{(|x - y| + |t - s|^{\frac{1}{2}})^{\alpha}} \, \mid \, (x,t), (y,s) \in \overline{I_{i}} \times [0,T], \, \, \right. \\
			&\qquad \qquad \qquad \left. (x,t) \neq (y,s) \right\}, \\
		[u]_{i,0} &= \sup \left\{ |u(x,t)| \, \mid \, (x,t) \in \overline{I_{i}} \times [0,T] \right\}.
	\end{align*}
If $u : \mathcal{I} \times [0,T] \to \mathbb{R}$, we write $[u]_{\alpha} = \max_{i} [u]_{i,\alpha}$.  

In the same set-up as the previous paragraph, if $\alpha \in (0,1]$, we will write
	\begin{align*}
		[u]_{i,1 + \alpha} &= [u_{x_{i}}]_{i,\alpha}, \\
		[u]_{1 + \alpha} &= \max_{i} [u]_{i,1 + \alpha},
	\end{align*}
and we define $[u]_{k + \alpha}$ analogously when $k \in \mathbb{N} \setminus \{0,1\}$.  Notice that, by our convention, $[u]_{i,1} \neq [u_{x_{i}}]_{i,0}$ since only the left-hand side measures the regularity of $u$ in the time variable.  
	
If $u : \mathcal{I} \to \mathbb{R}$, we define $[u]_{k + \alpha}$ by treating $u$ as constant in time and following the previous prescriptions.  We proceed analogously if instead $u : \overline{I_{i}} \to \mathbb{R}$.  

Given two functions $f,g : (0,\infty) \to (0,\infty)$, we write $f = o(g)$ if $\lim_{\epsilon \to 0^{+}} \frac{f(\epsilon)}{g(\epsilon)} = 0$.

If $a,b \in \mathbb{R}$, we write $a \vee b = \max\{a,b\}$, $a \wedge b = \min\{a,b\}$, $a^{+} = \max\{a,0\}$, and $a^{-} = -\min\{a,0\}$.

Finally, we will denote by $C$ a positive constant whose exact value may change from line to line.  We will not make explicit the dependence of the constant on the Hamiltonians or the solutions.

\subsection{Preliminaries on Viscosity Solutions}  If $\{F_{1},\dots,F_{K}\}$ is a family of functions such that $F_{i} : \overline{I_{i}} \times \mathbb{R} \times \mathbb{R} \times \mathbb{R} \to \mathbb{R}$ for each $i \in \{1,2,\dots,K\}$, we say that $u \in \text{USC}(\mathcal{I})$ (resp.\ $u \in \text{LSC}(\mathcal{I})$) is a sub-solution (resp.\ super-solution) of the equation
	\begin{equation*}
		\left\{ 
			\begin{array}{r l}
				F_{i}(x,u,u_{x_{i}},u_{x_{i}x_{i}}) = 0 & \text{in} \, \, I_{i} \\
				\sum_{i = 1}^{K} u_{x_{i}} = B & \text{on} \, \, \{0\}
			\end{array} 
		\right.
	\end{equation*}
if for each $\varphi \in C^{2}(\mathcal{I})$ such that $u - \varphi$ has a local maximum (resp. local minimum) at $x_{0} \in \mathcal{I}$, the following conditions are satisfied:
	\begin{equation*}
		\left\{
			\begin{array}{r l}
				F_{i}(x_{0},u(x_{0}),\varphi_{x_{i}}(x_{0}),\varphi_{x_{i}x_{i}}(x_{0})) \leq 0 & \text{if} \, \, x_{0} \in I_{i}, \\
				\min \left\{ \sum_{i = 1}^{K} \varphi_{x_{i}}(0) - B, \min_{i} F_{i}(0,u(0), \varphi_{x_{i}}(0),\varphi_{x_{i}x_{i}}(0)) \right\} \leq 0 & \text{if} \, \, x_{0} = 0
			\end{array}
		\right.
	\end{equation*}
(resp.\ 
	\begin{equation*}
		\left\{
			\begin{array}{r l}
				F_{i}(x_{0},u(x_{0}), \varphi_{x_{i}}(x_{0}),\varphi_{x_{i}x_{i}}(x_{0})) \geq 0 & \text{if} \, \, x_{0} \in I_{i}, \\
				\max\left\{ \sum_{i = 1}^{K} \varphi_{x_{i}}(0) - B, \max_{i} F_{i}(0,u(0), \varphi_{x_{i}}(0),\varphi_{x_{i}x_{i}}(0)) \right\} \geq 0 & \text{if} \, \, x_{0} = 0.)
			\end{array}
		\right.
	\end{equation*}
We say that $u \in C(\mathcal{I})$ is a viscosity solution if it is both a sub- and a super-solution.  
	
Similarly, we say that $u \in \text{USC}(\mathcal{I} \times [0,T])$ (resp.\ $u \in \text{LSC}(\mathcal{I} \times [0,T])$) is a sub-solution (resp.\ super-solution) of the equation
	\begin{equation*}
		\left\{
			\begin{array}{r l}
				F_{i}(t,x,u,u_{t},u_{x_{i}},u_{x_{i}x_{i}}) = 0 & \text{in} \, \, I_{i} \times (0,T) \\
				\sum_{i = 1}^{K} u_{x_{i}} = B & \text{on} \, \, \{0\} \times (0,T) \\
				u = u_{0} & \text{on} \, \, \mathcal{I} \times \{0\}
			\end{array} 
		\right.
	\end{equation*}
if $u \leq u_{0}$ (resp.\ $u \geq u_{0}$) on $\mathcal{I} \times \{0\}$ and for each $\varphi \in C^{2,1}(\mathcal{I} \times [0,T])$ such that $u - \varphi$ has a local maximum (resp.\ local minimum) at $(x_{0},t_{0}) \in \mathcal{I} \times (0,T]$, either
	\begin{equation*}
				F_{i}(t_{0},x_{0},u(x_{0},t_{0}),\varphi_{t}(x_{0},t_{0}),\varphi_{x_{i}}(x_{0},t_{0}),\varphi_{x_{i}x_{i}}(x_{0},t_{0})) \leq 0 \quad (\text{resp.} \, \, \geq 0)
	\end{equation*} 
if $x_{0} \in I_{i}$, or 
	\begin{equation*}
		\min \left\{ \sum_{i = 1}^{K} \varphi_{x_{i}}(0,t_{0}) - B, \min_{i} F_{i}(t_{0},0,u(0,t_{0}),\varphi_{t}(0,t_{0}),\varphi_{x_{i}}(0,t_{0}),\varphi_{x_{i}x_{i}}(0,t_{0})) \right\} \leq 0
	\end{equation*}
if $x_{0} = 0$
(resp.
	\begin{equation*}
		\max \left\{ \sum_{i = 1}^{K} \varphi_{x_{i}}(0,t_{0}) - B, \max_{i} F_{i}(t_{0},0,u(0,t_{0}),\varphi_{t}(0,t_{0}),\varphi_{x_{i}}(0,t_{0}),\varphi_{x_{i}x_{i}}(0,t_{0})) \right\} \geq 0
	\end{equation*}
if $x_{0} = 0$.)
As in the time-independent case, $u \in C(\mathcal{I} \times [0,T])$ is a viscosity solution if it is both a sub- and super-solution.
	
We remark that in \eqref{E: viscous_stationary} and \eqref{E: viscous_time}, the sub- and super-solution conditions at $0$ simplify considerably due to the regularizing effect of the second order term.  In this paper, we will only need that fact in the time-independent case.  It is stated precisely in Lemma \ref{L: classical_kirchoff} below.    

Throughout the paper, unless stated otherwise, we always work with viscosity solutions.  Therefore, it should be assumed that differential equations and inequalities are understood in the viscosity sense, and we will not repeat the word ``viscosity" in each statement.


\section{Stationary Problem: Existence and Uniqueness} \label{S: stationary_comparison}

We begin by showing that \eqref{E: stationary} and \eqref{E: viscous_stationary} are well-posed.  The two main results of this section are:

\begin{theorem} \label{T: existence_uniqueness_inviscid_stationary}  If $u$ is a bounded, upper semi-continuous sub-solution of \eqref{E: stationary} and $v$ is a bounded, continuous super-solution, then $u \leq v$ in $\mathcal{I}$.  \end{theorem}  

\begin{theorem} \label{T: existence_uniqueness_viscous_stationary}  If $u$ is a bounded, upper semi-continuous sub-solution of \eqref{E: viscous_stationary} and $v$ is a bounded, continuous super-solution, then $u \leq v$ in $\mathcal{I}$.  \end{theorem} 

Theorem \ref{T: existence_uniqueness_inviscid_stationary} is an application of the approach introduced in \cite{time-dependent}.  Theorem \ref{T: existence_uniqueness_viscous_stationary} uses more-or-less standard techniques from the theory of viscosity solutions, the Kirchoff condition notwithstanding.  
 
 In view of the technicalities arising from the Kirchoff condition, we will need to carefully study the behavior of sub- and super-solutions near the junction.  We will proceed by first stating the necessary lemmas and giving the proofs of Theorems \ref{T: existence_uniqueness_inviscid_stationary} and \ref{T: existence_uniqueness_viscous_stationary}.  The remainder of the section will be devoted to the proofs of the lemmas.

Existence of bounded solutions of \eqref{E: stationary} and \eqref{E: viscous_stationary} can be proved in this setting using Perron's Method arguing as in \cite{User}.  Here a key input is \eqref{As: sub-sup}, which provides a priori bounds on solutions.  Alternatively, in the case of \eqref{E: stationary}, we discuss in Remark \ref{R: half_relaxed_stationary} below how to prove existence using the finite-difference approximation and the method of half-relaxed limits.

\subsection{Proof of Theorem \ref{T: existence_uniqueness_inviscid_stationary}}

By the coercivity assumption \eqref{As: coercive}, uniformly bounded, upper semi-continuous sub-solutions of \eqref{E: stationary} and \eqref{E: viscous_stationary} are equi-Lipschitz.  We state this as the first lemma:

\begin{lemma} \label{L: uniform_Lipschitz}  If $u$ is a bounded, upper semi-continuous sub-solution of \eqref{E: stationary} or \eqref{E: viscous_stationary}, then there is a constant $L \geq 0$ depending on $\sup \left\{|u(x)| \, \mid \, x \in \mathcal{I} \right\}$, but not on $\epsilon$, such that $\text{Lip}(u) \leq KL$.  \end{lemma}  

Our proof of Lemma \ref{L: uniform_Lipschitz} is inspired by arguments coming from \cite[Section 8]{barles book}.  Note that since we are working in one space dimension, the proof applies to the viscous equation as well as the inviscid one.

The proof of Theorem \ref{T: existence_uniqueness_inviscid_stationary} relies on the following argument appearing in the proof of \cite[Theorem 2.1]{time-dependent}.  Since it is so important, we state it as a theorem in its own right.  This result establishes a sort-of maximum principle at the junction that allows us to rule out the possibility that a sub-solution is larger than a super-solution there.  

\begin{theorem} \label{T: comparison} Suppose $u : \mathcal{I} \to \mathbb{R}$ is a continuous sub-solution of \eqref{E: stationary}, and define $p^{+}_{i} = \limsup_{I_{i} \ni x \to 0} \frac{u(x)- u(0)}{x}$ for each $i \in \{1,2,\dots,K\}$.  Then the following conditions are satisfied: 
	\begin{itemize}
		\item[(i)] $|p_{i}^{+}| < \infty$ and $u(0) + H_{i}(0,p_{i}^{+}) \leq 0$ 
		\item[(ii)] If $(\tilde{p}_{1},\dots,\tilde{p}_{K}) \in \mathbb{R}^{K}$ satisfies $\tilde{p}_{i} \leq p_{i}^{+}$ for each $i \in \{1,2,\dots,K\}$, then
			\begin{equation} \label{E: crucial_part}
				\min \left\{ \sum_{i = 1}^{K} \tilde{p}_{i}, u(0) + \min_{i} H_{i}(0,\tilde{p}_{i}) \right\} \leq 0.
			\end{equation}
	\end{itemize}

Similarly, suppose $v : \mathcal{I}\ \to \mathbb{R}$ is a continuous super-solution of \eqref{E: stationary}, and define $q^{-}_{i} = \liminf_{I_{i} \ni x \to 0} \frac{v(x) - v   (0)}{x}$ for each $i \in \{1,2,\dots,K\}$.  Then 
	\begin{itemize}
		\item[(i)] If $|q_{i}^{-}| < \infty$, then $v(0) + H_{i}(0,q_{i}^{-}) \geq 0$.
		\item[(ii)] If $(\tilde{q}_{1},\dots,\tilde{q}_{K}) \in \mathbb{R}^{K}$ satisfies $\tilde{q}_{i} \geq q_{i}^{-}$ for each $i \in \{1,2,\dots,K\}$, then
			\begin{equation*}
				\max \left\{ \sum_{i = 1}^{K} \tilde{q}_{i}, v(0) + \max_{i} H_{i}(0,\tilde{q}_{i}) \right\} \geq 0.
			\end{equation*}
	\end{itemize}
\end{theorem}  

For the proof of Theorem \ref{T: comparison}, see Appendix \ref{A: purely_technical}.  

We now prove Theorem \ref{T: existence_uniqueness_inviscid_stationary}:

\begin{proof}[Proof of Theorem \ref{T: existence_uniqueness_inviscid_stationary}]  We argue by contradiction.  First, we remark that an elementary argument shows that for each $i \in \{1,2,\dots,K\}$, 
\begin{equation} \label{E: key_inequality}
\sup \left\{ u(x) - v(x) \, \mid \, x \in I_{i} \right\} \leq (u(0) - v(0))^{+}.
\end{equation}  

In view of \eqref{E: key_inequality}, it only remains to argue that $u(0) \leq v(0)$.  In what follows, we will assume $u(0) > v(0)$ and use Theorem \ref{T: comparison} and \cite[Lemma 3.1]{time-dependent} to obtain a contradiction.

Since our assumption implies $u(0) - v(0) = \sup \left\{u(x) -v(x) \, \mid \, x \in \mathcal{I} \right\}$, the function $x \mapsto u(x) - v(x)$ is maximized at $0$.  Thus, independently of the choice of $i \in \{1,2,\dots,K\}$, we have
\begin{equation*}
p_{i} := \limsup_{I_{i} \ni x \to 0} \frac{u(x) - u(0)}{x} \geq \liminf_{I_{i} \ni x \to 0} \frac{v(x) - v(0)}{x} =: q_{i}.
\end{equation*}
We claim that the following conditions are satisfied:
\begin{itemize}
\item[(a)] If $i \in \{1,2,\dots,K\}$, then $p_{i} \geq q_{i}$, $u(0) + H_{i}(0,p_{i}) \leq 0$, and if $q_{i} > -\infty$, then $0 \leq v(0) + H_{i}(0,q_{i})$.
\item[(b)] If $(\tilde{p}_{1},\dots,\tilde{p}_{K}) \in \mathbb{R}^{K}$ satisfies $\tilde{p}_{i} \leq p_{i}$ for all $i \in \{1,2,\dots,K\}$, then 
\begin{equation*}
\min \left\{ \sum_{i = 1}^{K} \tilde{p}_{i}, u(0) + \min_{i} H_{i}(0,\tilde{p}_{i}) \right\} \leq 0.
\end{equation*}
\item[(c)] If $(\tilde{q}_{1},\dots,\tilde{q}_{K}) \in \mathbb{R}^{K}$ satisfies $\tilde{q}_{i} \geq q_{i}$ for all $i \in \{1,2,\dots,K\}$, then 
\begin{equation*}
\max \left\{ \sum_{i = 1}^{K} \tilde{q}_{i}, v(0) + \max_{i} H_{i}(0,\tilde{q}_{i}) \right\} \geq 0.
\end{equation*}
\end{itemize}

Regarding (a), we have already established that $p_{i} \geq q_{i}$, no matter the choice of $i$.  The rest of the assertions in (a)-(c) follow from the definitions of $(p_{1},\dots,p_{K})$ and $(q_{1},\dots,q_{K})$ and a direct application of Theorem \ref{T: comparison}.

Let $a = u(0)$ and $b = v(0)$.  At this stage, it is convenient to define a vector $(q_{1}^{*},\dots,q_{K}^{*}) \in \mathbb{R}^{K}$ to replace $(q_{1},\dots,q_{K})$.  If $q_{i} > -\infty$, define $q_{i}^{*} = q_{i}$.  Otherwise, pick $q_{i}^{*} \in (-\infty,p_{i})$ small enough that $b + H_{i}(0,q_{i}^{*}) \geq 0$.  Now items (a)-(c) above show that the numbers $a$ and $b$ and vectors $(p_{1},\dots,p_{K})$ and $(q_{1}^{*},\dots,q_{K}^{*})$ satisfy the hypotheses of \cite[Lemma 3.1]{time-dependent}.  Therefore, according to the conclusion of that lemma, the inequality $a \leq b$ holds.  By the definition of $a$ and $b$, this contradicts our assumption that $u(0) > v(0)$.  

We conclude that $u(0) \leq v(0)$ and, thus, by \eqref{E: key_inequality}, $u \leq v$ in $\mathcal{I}$.\end{proof}  

\subsection{Proof of Theorem \ref{T: existence_uniqueness_viscous_stationary}}  The viscous case has to be handled slightly differently than the inviscid one.  In fact, the viscosity term makes the arguments easier since the Kirchoff condition has to hold classically.  That fact is implied by the next two lemmas:

\begin{lemma} \label{L: differentiability}  Fix $i \in \{1,2,\dots,K\}$.  Suppose $\epsilon > 0$ and $u : (-\infty,0] \to \mathbb{R}$ is a continuous sub-solution of $u - \epsilon u_{xx} + H_{i}(x,u_{x}) = 0$ in $(-\infty,0)$.  Then $\lim_{x \to 0^{-}} \frac{u(x) - u(0)}{x}$ exists.\end{lemma}

\begin{lemma} \label{L: classical_kirchoff}  If $u$ is a bounded, upper semi-continuous sub-solution of \eqref{E: viscous_stationary} in $\mathcal{I}$, $\varphi \in C^{2}(\mathcal{I})$, and $u - \varphi$ has a local maximum at $0$, then $\sum_{i = 1}^{K} \varphi_{x_{i}}(0) \leq 0$.  Similarly, if $v$ is a bounded, lower semi-continuous sub-solution of \eqref{E: viscous_stationary} in $\mathcal{I}$, $\varphi \in C^{2}(\mathcal{I})$, and $u - \varphi$ has a local minimum at $0$, then $\sum_{i = 1}^{K} \varphi_{x_{i}}(0) \geq 0$.   \end{lemma}  

The next remark will be helpful in the proof of Theorem \ref{T: existence_uniqueness_viscous_stationary} and Lemma \ref{L: uniform_Lipschitz}:  

\begin{remark} \label{R: calculus}  Suppose $u : (-\infty,0] \to \mathbb{R}$ is upper semi-continuous.  Define $p \in [-\infty,\infty]$ by 
\begin{equation*}
p= \liminf_{x \to 0^{-}} \frac{u(x) - u(0)}{x}.
\end{equation*}
Furthermore, assume that $p > -\infty$ and $\tilde{p} \in (-\infty,p)$.  Then a straightforward calculus exercise shows $x \mapsto u(x) - \tilde{p}x$ has a local maximum at $0$.

Similarly, if $v : (-\infty, 0] \to \mathbb{R}$ is lower semi-continuous, $q \in [-\infty,\infty]$ is defined by 
	\begin{equation*}
		q= \limsup_{x \to 0^{-}} \frac{v(x) - v(0)}{x},
	\end{equation*}
and $q < \tilde{q} < \infty$, then $x \mapsto v(x) - \tilde{q}x$ has a local minimum at $0$.

Combining the previous observations with Lemma \ref{L: classical_kirchoff}, we see that if $v$ is a bounded, lower-semi-continuous sub-solution of \eqref{E: viscous_stationary} in $\mathcal{I}$ and if $q_{i} = \limsup_{I_{i} \ni x \to 0} \frac{v(x) - v(0)}{x}$ for each $i$, then $\sum_{i = 1}^{K} \tilde{q}_{i} \geq 0$ whenever $(\tilde{q}_{1},\dots,\tilde{q}_{K}) \in \mathbb{R}^{K}$ satisfies $\tilde{q}_{i} > q_{i}$ for all $i$.  Passing to the limit $(\tilde{q}_{1},\dots,\tilde{q}_{K}) \to (q_{1},\dots,q_{K})$, we conclude that $\sum_{i = 1}^{K} q_{i} \geq 0$.        \end{remark}  

The proof of Lemma \ref{L: classical_kirchoff} is very similar in spirit to that of Lemma \ref{L: differentiability}, and is therefore omitted.  Deferring the proof of Lemma \ref{L: differentiability} until later, we now prove Theorem \ref{T: existence_uniqueness_viscous_stationary}:\

\begin{proof}[Proof of Theorem \ref{T: existence_uniqueness_viscous_stationary}]  First, as in the inviscid case, we note that for each $i \in \{1,2,\dots,K\}$, 
\begin{equation} \label{E: key_inequality_viscous}
\sup \left\{u(x) - v(x) \, \mid \, x \in I_{i} \right\} \leq (u(0) - v(0))^{+}.
\end{equation}
Therefore, to conclude, we only need to show $u(0) \leq v(0)$.  

To prove this, we bend $u$ a little bit to make the Kirchoff condition strict.  One convenient way to do this is to fix $\delta, \Delta > 0$ and define $u^{\delta,\Delta}$ in $\mathcal{I}$ by 
\begin{equation*}
u^{\delta,\Delta}(x) = u(x) - \delta x - \frac{\Delta x^{2}}{2}.
\end{equation*}
By Lemmas \ref{L: differentiability} and \ref{L: classical_kirchoff}, $u^{\delta,\Delta}$ is differentiable from the left at $0$ in each ray, and $\sum_{i = 1}^{K} u_{x_{i}}^{\delta,\Delta}(0) = \sum_{i = 1}^{K} (u_{x_{i}}(0) - \delta) \leq -\delta K$.  

We claim that there is at least one $i \in \{1,2,\dots,K\}$ such that the maximum of $u^{\delta,\Delta} - v$ in $\overline{I_{i}}$ is not achieved at zero.  Indeed, this follows from Lemmas \ref{L: differentiability} and \ref{L: classical_kirchoff} and the last paragraph of Remark \ref{R: calculus}, which together imply
\begin{equation*}
\sum_{i = 1}^{K} \left(\liminf_{I_{i} \ni x \to 0} \frac{(u^{\delta,\Delta}(x) - v(x)) - (u^{\delta,\Delta}(0) - v(0))}{x}\right) \leq \sum_{i = 1}^{K} u_{x_{i}}^{\delta,\Delta}(0) \leq  - \delta K < 0.
\end{equation*}
Thus, $\liminf_{I_{j} \ni x \to 0} \frac{(u^{\delta,\Delta}(x) - v(x)) - (u^{\delta,\Delta}(0) - v(0))}{x} < 0$ for at least one $j$, which implies $u^{\delta,\Delta} - v$ is not maximized at $0$. Therefore, in what follows, we fix $j_{\delta} \in \{1,2,\dots,K\}$ such that 
	\begin{equation*}
		\sup \left\{u^{\delta,\Delta}(x) - v(x) \, \mid \, x \in I_{j_{\delta}}\right\} > u^{\delta,\Delta}(0) - v(0).
	\end{equation*}

Note additionally that although the point where $u^{\delta,\Delta} - v$ is maximized in $I_{j_{\delta}}$ may not be unique, the distance to any such point will converge to zero as $\delta \to 0^{+}$ by \eqref{E: key_inequality_viscous} and the definition of $u^{0,\Delta}$.  That is, if $d_{\delta,\Delta} = \max \left\{ d(y,0) \, \mid \, y \, \, \text{maximizes} \, \, u^{\delta,\Delta} - v \, \, \text{in} \, \, I_{j_{\delta}}\right\}$, then $\lim_{\delta \to 0^{+}} d_{\delta,\Delta} = 0$.

Let $\kappa > 0$ and consider the function $\Phi : \overline{I_{j_{\delta}}} \times \overline{I_{j_{\delta}}} \to \mathbb{R}$ given by 
\begin{equation*}
\Phi(x,y) = u(x) - v(y) - \frac{|x - y|^{2}}{2 \kappa} - \delta y - \frac{\Delta y^{2}}{2}.
\end{equation*}
In view of the definition of $\Phi$, we can fix $(x_{\kappa},y_{\kappa}) \in \overline{I_{j_{\delta}}} \times \overline{I_{j_{\delta}}}$ such that $\Phi(x_{\kappa},y_{\kappa}) = \sup \left\{\Phi(x,y) \, \mid \, x,y \in I_{j_{\delta}}\right\}$.  Moreover, $\lim_{\kappa \to 0^{+}} x_{\kappa} = \lim_{\kappa \to 0^{+}} y_{\kappa}$ equals a maximum point of $u^{\delta,\Delta} - v$ along subsequences.  In particular, 
	\begin{equation*}
		\liminf_{\kappa \to 0^{+}} \left(d(x_{\kappa},0) + d(y_{\kappa},0)\right) > 0.
	\end{equation*}
Therefore, if $\kappa$ is small enough, we can invoke the maximum principle for semi-continuous functions (cf.\ \cite[Theorem 3.2]{User}) and the equations satisfied by $u$ and $v$ to find $X, Y \in \mathbb{R}$ with $X \leq Y$ such that
\begin{equation*}
	\left\{
		\begin{array}{r}
			u(x_{\kappa}) - \epsilon X + H_{i}\left(x_{\kappa}, \frac{x_{\kappa} - y_{\kappa}}{\kappa} \right) \leq 0 \\
	v(y_{\kappa}) - \epsilon Y + \epsilon \Delta + H_{i}\left(y_{\kappa}, \frac{x_{\kappa} - y_{\kappa}}{\kappa} - \delta - \Delta y_{\kappa} \right) \geq 0
		\end{array}
	\right.
\end{equation*}

Since $\Phi_{i}(y_{\kappa},y_{\kappa}) \leq \Phi_{i}(x_{\kappa},y_{\kappa})$, the Lipschitz continuity of $u$ implies that $\kappa^{-1}|x_{\kappa} - y_{\kappa}|$ remains bounded as $\kappa \to 0^{+}$ and, thus, $\lim_{\kappa \to 0^{+}} |x_{\kappa} - y_{\kappa}| = 0$.  Therefore, we can subtract the previous inequalities to find
\begin{equation*}
u(x_{\kappa}) - v(y_{\kappa}) \leq \epsilon \Delta + \omega \left(|x_{\kappa} - y_{\kappa}| + \delta + \Delta d(y_{\kappa},0)\right),
\end{equation*}
where $\omega$ is a modulus of continuity of $H_{i}$ in $\overline{I_{i}} \times B(0,R)$ for a fixed $R > 0$ independent of $\kappa$ and $\delta$, the existence of which follows from assumption \eqref{As: continuity} and the boundedness of $\frac{x_{\kappa} - y_{\kappa}}{\kappa}$ independent of $\kappa$ and $\delta$.  

Fix a subsequence $(\kappa_{k})_{k \in \mathbb{N}} \subseteq (0,\infty)$ and a maximum point $\bar{x}_{\delta}$ of $u^{\delta,\Delta} - v$ such that $\lim_{k \to \infty} \kappa_{k} = 0$ and $\lim_{k \to \infty} x_{\kappa_{k}} = \lim_{k \to \infty} y_{\kappa_{k}} = \bar{x}_{\delta}$.  Sending $k \to \infty$ in the previous inequality, we obtain
\begin{equation} \label{E: grinding}
u(\bar{x}_{\delta}) - v(\bar{x}_{\delta}) \leq \epsilon \Delta + \omega(\delta + \Delta d_{\delta,\Delta}).
\end{equation}

Taking the limit $\delta \to 0^{+}$ with $\Delta$ fixed and recalling that $d_{\delta,\Delta} \to 0$ in the process, we find
\begin{equation*}
u(0) - v(0) \leq \epsilon \Delta.
\end{equation*}
Finally, we send $\Delta \to 0^{+}$ to conclude:
\begin{equation*}
\sup \left\{u(x) - v(x) \, \mid \, x \in \mathcal{I} \right\} \leq (u(0) - v(0))^{+} = 0.
\end{equation*}
\end{proof}    

\subsection{Proofs of Lemmas \ref{L: uniform_Lipschitz} and \ref{L: differentiability}}


%

\begin{proof}[Proof of Lemma \ref{L: uniform_Lipschitz}]First, we claim that if $j \in \{1,2,\dots,K\}$, then $u$ is uniformly Lipschitz continuous in $I_{j}$.  Fix such a $j$ and let $C = \sup \{|u(x)| \, \mid \, x \in \mathcal{I} \}$.

By \eqref{As: coercive}, there is an $L \geq 1$ such that 
\begin{equation} \label{E: coercivity_implies_lipschitz}
-C + H_{i}(x,p) \geq 1 \quad \text{if} \, \, x \in \overline{I_{i}}, \, \, |p| \geq L, \, \, i \in \{1,2,\dots,K\}.
\end{equation}
We claim that $|u(x) - u(y)| \leq KL |x - y|$ if $x,y \in I_{j}$.  

Fix $x \in I_{j}$ and define a test function $\varphi : \mathcal{I} \to \mathbb{R}$ by 
\begin{equation} \label{E: knife_test_function}
\varphi(y) = \left\{ 
				\begin{array}{r l}
							u(x) + KL|x - y|, & \text{if} \, \, y \in I_{j} \\
							u(x) + KL|x| + L|y|, & \text{if} \, \, y \in I_{i}, \, \, i \neq j
				\end{array}
		\right.
\end{equation}
Notice that $\varphi_{x_{j}}(0) = KL$ while $\varphi_{x_{i}}(0) = -L$ if $i \neq j$.  In particular, we find $\sum_{i = 1}^{K} \varphi_{x_{i}}(0) = L$.

Since $u$ is bounded, we can let $x_{0}$ be a point where $u - \varphi$ is maximized in $\mathcal{I}$.  If $x_{0} \in I_{j} \setminus \{0, x\}$, then the smoothness of $\varphi$ at $x_{0}$ together with the sub-solution property yield
\begin{equation*}
-C + H_{j}(x_{0},KL) \leq u(x_{0}) + H_{j}(x_{0},KL) \leq 0,
\end{equation*}
contradicting \eqref{E: coercivity_implies_lipschitz}.  A similar argument shows that $x_{0} \notin I_{i}$ if $i \neq j$.  Finally, if $x_{0} = 0$, then the sub-solution property implies
\begin{equation*}
\min \left\{ \sum_{i = 1}^{K} \varphi_{x_{i}}(0), u(0) + \min_{i} H_{i}(0,\varphi_{x_{i}}(0))\right\} \leq 0.
\end{equation*}
However, in view of the choice of $L$ and the definition of $\varphi$, the left-hand side is no less than $1$, a contradiction.  We conclude that $x_{0} = x$.  

Since $u - \varphi$ is maximized at $x_{0} = x$, we find
\begin{equation*}
u(y) - u(x) \leq \varphi(y) - \varphi(x) = KL |x - y| \quad \text{if} \, \, y \in I_{j}.
\end{equation*}
As $x$ is an arbitrary point in $I_{j}$, we conclude that $u$ is uniformly Lipschitz continuous in $I_{j}$ with constant $KL$.  

We have shown that $u$ is uniformly Lipschitz continuous in the interior of each edge.  To see that $u$ is uniformly Lipschitz in $\mathcal{I}$, it only remains to show that $u$ is continuous at $0$.  Note that this is not automatic since we are only assuming that $u$ is upper semi-continuous in $\mathcal{I}$.  

It is convenient to argue by contradiction.  Suppose that there is a $j \in \{1,2,\dots,K\}$ such that $\lim_{I_{j} \ni x \to 0} u(x) < u(0)$.  Since $u$ is uniformly Lipschitz in $I_{j}$, the following is an immediate consequence:      
\begin{equation*}
\limsup_{I_{j} \ni x \to 0} \frac{u(x) - u(0)}{x} = \liminf_{I_{j} \ni x \to 0} \frac{u(x) - u(0)}{x} = +\infty.
\end{equation*}
For each $i \neq j$, let $p_{i} = \min\left\{\liminf_{I_{i} \ni x \to 0} \frac{u(x) - u(0)}{x}, 0\right\}$, and then fix $E_{0} \geq \max_{i \neq j} |p_{i}|$.  By Remark \ref{R: calculus}, if $E > E_{0}$ and $\varphi : \mathcal{I} \to \mathbb{R}$ is defined by 
\begin{equation*}
\varphi(x) = \left\{
				\begin{array}{r l}
							u(0) + KEx, & x \in I_{j} \\
							u(0) - Ex, & x \in I_{i}, \, \, i \neq j
				\end{array}
			\right.
\end{equation*}
then $u - \varphi$ has a local maximum at $0$.  Appealing to the equation, we find
\begin{equation*}
\min \left\{ E, u(0) + \min\{H_{j}(0,KE), \min_{i \neq j} H_{i}(0,-E)\} \right\} \leq 0.
\end{equation*}
Sending $E \to \infty$ and invoking \eqref{As: coercive}, we obtain a contradiction.  

We conclude that $\lim_{I_{j} \ni x \to 0} u(x) = u(0)$.  Since $j$ was chosen arbitrarily, $u$ is continuous at $0$, and, therefore, uniformly Lipschitz continuous in $\mathcal{I}$ with $\text{Lip}(u) \leq KL$. 
\end{proof}  

Finally, we show that, in the viscous case, sub-solutions are necessarily differentiable at the junction.  

\begin{proof}[Proof of Lemma \ref{L: differentiability}]  We argue by contradiction.  Define $p^{-}, p^{+}$ by
\begin{equation*}
p^{+} := \limsup_{x \to 0^{-}} \frac{u(x) - u(0)}{x}, \quad \liminf_{x \to 0^{-}} \frac{u(x) - u(0)}{x} =: p^{-}.
\end{equation*}  
Note that since Lemma \ref{L: uniform_Lipschitz} applies, $\{p^{-},p^{+}\} \subseteq \mathbb{R}$ holds.  Assume that $u$ is not differentiable at $0$, that is, assume $p^{-} < p^{+}$.  

Let $p = \frac{p^{+} + p^{-}}{2}$.  Given $\Delta > 0$, define $\varphi^{\Delta} : (-\infty,0] \to \mathbb{R}$ by 
\begin{equation*}
\varphi^{\Delta}(x) = u(0) + px - \frac{\Delta x^{2}}{2}.
\end{equation*}  
Since $\varphi^{\Delta}_{x}(0) = p \in (p^{-},p^{+})$, it is not hard to show that there are sequences $(y_{n})_{n \in \mathbb{N}}, (x_{n})_{n \in \mathbb{N}} \subseteq (-\infty,0)$ such that $y_{n} < x_{n} < y_{n + 1}$, $\lim_{n \to \infty} y_{n} = 0$, and 
\begin{equation*}
u(y_{n}) - \varphi^{\Delta}(y_{n}) = 0 < u(x_{n}) - \varphi^{\Delta}(x_{n}) \quad \text{if} \, \, n \in \mathbb{N}.
\end{equation*}
Since $u - \varphi^{\Delta}$ is continuous, we can let $\bar{x}_{n}$ be a point where it achieves its maximum in $[y_{n},y_{n + 1}]$.  By construction, $\bar{x}_{n} \in (y_{n},y_{n + 1})$.  Thus, since $u$ is a sub-solution, we obtain
\begin{equation*}
u(\bar{x}_{n}) + \epsilon \Delta + H_{i}(\bar{x}_{n},p - \Delta \bar{x}_{n}) = u(\bar{x}_{n}) - \epsilon \varphi^{\Delta}_{xx}(\bar{x}_{n}) + H_{i}(\bar{x}_{n},\varphi^{\Delta}_{x}(\bar{x}_{n})) \leq 0.
\end{equation*}  

Sending $n \to \infty$ with $\Delta$ fixed, we find, by continuity of $u$,
\begin{equation*}
u(0) + \epsilon \Delta + H(0,p) \leq 0.
\end{equation*}
Since $\Delta$ is an arbitrary positive number and $\epsilon > 0$, we conclude that $u(0) + H(0,p) = -\infty$, contradicting the fact that $\{u(0),H(0,p)\} \subseteq \mathbb{R}$.  We conclude that $p^{-} = p^{+}$.  \end{proof}


\section{Cauchy Problem: Existence and Uniqueness} \label{S: time_comparison}

In this section, we revisit the comparison principle associated with \eqref{E: time} and comment briefly on the modifications necessary to establish one in the case of \eqref{E: viscous_time}.  

Where \eqref{E: time} is concerned, the comparison principle has already been proven in \cite{time-dependent}.  The proof we present here is a slight variation on the one appearing there, which is useful since we need to quantify the proof.  The difference is we show how to forego the blow-up argument.  At the request of an anonymous reviewer,  we will give a complete proof, which, at any rate, is closely related to what was done in the previous section and motivates our approach to the error estimates.

As in the time-independent case, the comparison principle for \eqref{E: viscous_time} is slightly easier to prove than for \eqref{E: time} due to the second-order term.  Since the proof can be obtained by combining some of the arguments used to treat \eqref{E: time} with those used to analyze \eqref{E: viscous_stationary}, we only give a sketch in the sequel.

The two comparison results established in this section are:

\begin{theorem} \label{T: inviscid_comparison_time}  If $u \in UC(\mathcal{I} \times [0,T])$ is a sub-solution of \eqref{E: time} and $v \in UC(\mathcal{I} \times [0,T])$ is a super-solution, then $u \leq v$ in $\mathcal{I} \times [0,T]$.  \end{theorem}  

\begin{theorem} \label{T: viscous_comparison_time}  If $u \in UC(\mathcal{I} \times [0,T])$ is a sub-solution of \eqref{E: viscous_time} and $v \in UC(\mathcal{I} \times [0,T])$ is a super-solution, then $u \leq v$ on $\mathcal{I} \times [0,T]$.    \end{theorem}

As is standard in the theory of viscosity solutions, Theorem \ref{T: inviscid_comparison_time} implies a contractivity property of the semi-groups associated with \eqref{E: time} and \eqref{E: viscous_time}.  This will be useful later when we study error estimates.  We give the precise statement in the next remark: 

\begin{remark} \label{R: contractivity}  If $u$ is a solution of \eqref{E: time} with initial datum $u_{0}$ and $v$ is a solution of \eqref{E: time} with initial datum $v_{0}$, then for each $(x,t) \in \mathcal{I} \times [0,T]$, 
	\begin{equation} \label{E: contractivity}
		|u(x,t) - v(x,t)| \leq \sup \left\{|u_{0}(x) - v_{0}(x)| \, \mid \, x \in \mathcal{I} \right\}.
	\end{equation} 
	
To see this, observe that the semi-group associated with \eqref{E: time} commutes with the addition of constant functions.  Therefore, if $C = \sup \{(u_{0}(x) - v_{0}(x))^{+} \, \mid \,x \in \mathcal{I}\}$, then $v + C$ is a solution of \eqref{E: time} as long as $v$ is, and it is at least as large as $u$ at the initial time.  Therefore, by Theorem \ref{T: inviscid_comparison_time}, $u \leq v + C$.  Reversing the roles of $u$ and $v$, we obtain \eqref{E: contractivity}.  

Of course, the same observations apply to \eqref{E: viscous_time} by Theorem \ref{T: viscous_comparison_time}. \end{remark}  

Existence of solutions of \eqref{E: time} and \eqref{E: viscous_time} is treated in Appendix \ref{A: cauchy_existence}.  In the case of \eqref{E: time}, existence also follows by applying the method of half-relaxed limits to the finite-difference scheme.  See Remark \ref{R: half_relaxed_time} below.  

\subsection{Proof of Theorem \ref{T: inviscid_comparison_time}} \label{S: proof_of_comparison_time} The proof of Theorem \ref{T: inviscid_comparison_time} uses inf- and sup-convolutions in time and a time-freezing argument.  The main ingredients are stated next as lemmas.  These are proved in subsequent subsections.  

Given $\theta > 0$, we define the sup-convolution $u^{\theta}$ of $u$ in time by 
\begin{equation} \label{E: sup_convolution}
u^{\theta}(x,t) = \sup \left\{ u(x,s) - \frac{(t - s)^{2}}{2 \theta} \, \mid \, s \in [0,T] \right\}.
\end{equation}
Analogously, the inf-convolution $v_{\theta}$ of $v$ is defined by 
\begin{equation} \label{E: inf_convolution}
v_{\theta}(x,t) = \inf \left\{ v(x,s) + \frac{(t - s)^{2}}{2 \theta} \, \mid \, s \in [0,T] \right\}.
\end{equation}

The first result we need concerns the regularity of $u^{\theta}$ and $v_{\theta}$.  In the sequel, we will use the fact that these functions are respectively semi-convex and semi-concave.  Let us start by recalling the definition, following \cite{cannarsa sinestrari}.    

We say that $f : [0,T] \to \mathbb{R}$ is semi-convex with linear modulus $\lambda_{f} >0$ if $t \mapsto f(t) + \frac{\lambda_{f} t^{2}}{2}$ is continuous and convex in $[0,T]$.  Similarly, we say that $g : [0,T] \to \mathbb{R}$ is semi-concave with linear modulus $\lambda_{g} > 0$ if $t \mapsto g(t) - \frac{\lambda_{g} t^{2}}{2}$ is continuous and concave in $[0,T]$.

\begin{prop} \label{P: sup_inf_convolution}  If $u, v \in UC(\mathcal{I} \times [0,T])$, then the functions $u^{\theta}$ and $v_{\theta}$ defined respectively in \eqref{E: sup_convolution} and \eqref{E: inf_convolution} also belong to $UC(\mathcal{I} \times [0,T])$.  Moreover, for each $x \in \mathcal{I}$, the functions $t \mapsto u^{\theta}(x,t)$ and $t \mapsto v_{\theta}(x,t)$ are respectively semi-convex and semi-concave in $[0,T]$, both with linear modulus $\theta^{-1}$. \end{prop}  

In what follows, we define $\omega : [0,\infty) \to [0,\infty)$ by 
\begin{align}
\omega(\xi) &= \sup \left\{ |u(x,t) - u(y,s)| \vee |v(x,t) - v(y,s)| \, \mid \, (x,t), (y,s) \in  \right. \label{E: modulus} \\
	&\quad \quad \quad \quad \quad \left. \mathcal{I} \times [0,T], \, \, d(x,y) + |t - s| \leq \xi \right\}. \nonumber
\end{align}
Note that the assumptions of Theorem \ref{T: inviscid_comparison_time} imply $\lim_{\xi \to 0^{+}} \omega(\xi) = 0$.  

The relationship between the functions $u^{\theta}$ and $v_{\theta}$ and the PDE is summarized in the next lemma.

\begin{lemma} \label{L: sup_inf_convolution}  Under the hypotheses of Theorem \ref{T: inviscid_comparison_time}, if $u^{\theta}$ is defined by \eqref{E: sup_convolution}, $v_{\theta}$ is defined by \eqref{E: inf_convolution}, $\omega$ is defined by \eqref{E: modulus}, and $\theta$ is sufficiently small, then there is a $T_{\theta} \in (0,T)$ such that $u^{\theta}$ (resp.\ $v_{\theta}$) is a sub-solution (resp.\ super-solution) of the following problem:
\begin{equation} \label{E: modified_equation}
\left\{ 
	\begin{array}{r l}
				u^{\theta}_{t} + H_{i}(t,x,u^{\theta}_{x_{i}}) - DT_{\theta} = 0 & \text{in} \, \, I_{i} \times (T_{\theta}, T) \\
				\sum_{i = 1}^{K} u^{\theta}_{x_{i}} = 0 & \text{on} \, \, \{0\} \times (T_{\theta},T) \\
				u^{\theta} = u_{0} + 2\omega(T_{\theta}) & \text{on} \, \, \mathcal{I}  \times \{T_{\theta}\}
	\end{array}
\right.
\end{equation}
(resp.\ 
\begin{equation} \label{E: modified_equation_2}
\left\{ 
	\begin{array}{r l}
				v_{\theta,t} + H_{i}(t,x,v_{\theta,x_{i}})  + DT_{\theta} = 0 & \text{in} \, \, I_{i} \times (T_{\theta}, T) \\
				\sum_{i = 1}^{K} v_{\theta,x_{i}} = 0 & \text{on} \, \, \{0\} \times (T_{\theta},T) \\
				v_{\theta} = u_{0} - 2\omega(T_{\theta}) & \text{on} \, \, \mathcal{I} \times \{T_{\theta}\}).
	\end{array}
\right.
\end{equation}
Moreover, $\lim_{\theta \to 0^{+}} T_{\theta} = 0$, and $u^{\theta} \in \text{Lip}(\mathcal{I} \times [T_{\theta},T])$.  
\end{lemma}  

The remainder of the proof of Theorem \ref{T: inviscid_comparison_time} consists in estimating $u^{\theta} - v_{\theta}$ and then sending $\theta \to 0^{+}$.  First, we need a weak comparison result that shows, effectively, that this task reduces to studying $u^{\theta} - v_{\theta}$ near the surfaces $\{t = T_{\theta}\}$ and $\{x = 0\}$.  

\begin{lemma} \label{L: basic_comparison}  Under the hypotheses of Theorem \ref{T: inviscid_comparison_time}, if $\delta > 0$ and $\theta$ is small enough that $T_{\theta} < T$, then the following inequality holds:
\begin{equation} \label{E: gronwall}
\sup \left\{ u^{\theta}(x,t) - v_{\theta}(x,t) - (2DT_{\theta} + \delta)t \, \mid \, (x,t) \in \mathcal{I} \times [T_{\theta},T] \right\} \leq f_{\delta}(\theta,u,v),
\end{equation}
where $f_{\delta}$ is given by 
\begin{equation*}
f_{\delta}(\theta,u,v) = (4 \omega(T_{\theta})) \vee \max\{u^{\theta}(0,t) - v_{\theta}(0,t) - (2DT_{\theta} + \delta) t \, \mid \, t \in [T_{\theta},T]\}.
\end{equation*}
\end{lemma}

Next, we need to show that the behavior at $\{x = 0\}$ is actually controlled by what happens at time $T_{\theta}$.  This is where we freeze time and use the junction condition, and the result is stated in the next lemma:  

\begin{lemma} \label{L: zero_behavior} Under the hypotheses of Theorem \ref{T: inviscid_comparison_time}, if $u^{\theta}$ is defined by \eqref{E: sup_convolution}, $v_{\theta}$ is defined by \eqref{E: inf_convolution}, and $\theta$ is so small that $T_{\theta} < T$, then 
\begin{equation} \label{E: boundary_behavior}
\max \left\{ u^{\theta}(0,t) - v_{\theta}(0,t) - 2 DT_{\theta}t \, \mid \, t \in [T_{\theta},T] \right\} \leq 4 \omega(T_{\theta}).
\end{equation}
\end{lemma}    

With the lemmas in hand, we can prove Theorem \ref{T: inviscid_comparison_time}:

\begin{proof}[Proof of Theorem \ref{T: inviscid_comparison_time}]  Suppose $(x,t) \in \mathcal{I} \times [0,T]$.  We claim that $u(x,t) - v(x,t) \leq 0$.  Of course, if $t = 0$, then this follows from the fact that $u(x,0) \leq v(x,0)$.  Therefore, in what follows, assume $t > 0$.  

Let $\theta \mapsto T_{\theta}$ be the function defined in Lemma \ref{L: sup_inf_convolution}.  Since $\lim_{\theta \to 0^{+}} T_{\theta} = 0$, we can fix $\theta_{0} > 0$ such that $T_{\theta} < t$ if $\theta \in (0,\theta_{0})$.  Henceforth, assume $\theta \in (0,\theta_{0})$.  

By definition of $u^{\theta}$ and $v_{\theta}$, we have
\begin{equation*}
u(x,t) - v(x,t) \leq u^{\theta}(x,t) - v_{\theta}(x,t).
\end{equation*}  
Thus, if $\delta > 0$, then Lemma \ref{L: basic_comparison} implies
\begin{equation*}
u(x,t) - v(x,t) - (2DT_{\theta} + \delta) t \leq f_{\delta}(\theta,u,v).
\end{equation*}
Sending $\delta \to 0^{+}$ and appealing to the conclusion of Lemma \ref{L: zero_behavior}, we find
\begin{equation*}
u(x,t) - v(x,t) - 2DT_{\theta} t \leq \lim_{\delta \to 0^{+}} f_{\delta}(\theta,u,v) = 4 \omega(T_{\theta}).
\end{equation*}
Finally, letting $\theta \to 0^{+}$, we conclude $u(x,t) - v(x,t) \leq 0$.  
\end{proof}  

\subsection{Properties of Sup- and Inf-convolutions}  Now we give the proofs of Proposition \ref{P: sup_inf_convolution} and Lemma \ref{L: sup_inf_convolution}.  

\begin{proof}[Proof of Proposition \ref{P: sup_inf_convolution}]  That $u^{\theta}, v_{\theta} \in \text{UC}(\mathcal{I} \times [0,T])$ follows directly from the assumptions on $u$ and $v$ and manipulation of \eqref{E: sup_convolution} and \eqref{E: inf_convolution}.  Notice that the function $t \mapsto u^{\theta}(x,t) + \frac{t^{2}}{2 \theta}$ can be written as a supremum of affine functions as follows: 
	\begin{equation*}
		u^{\theta}(x,t) + \frac{t^{2}}{2\theta} = \sup \left\{ u(x,s) - \frac{s^{2}}{2\theta} + \theta^{-1} st \, \mid \, s \in [0,T]\right\},
	\end{equation*}
Thus, that function is convex, and, in particular, $u^{\theta}$ is semi-convex with linear modulus $\theta^{-1}$, as claimed.  We show $v_{\theta}$ is semi-concave arguing similarly.    \end{proof}  

\begin{proof}[Proof of Lemma \ref{L: sup_inf_convolution}]  We will only give the details for $u^{\theta}$ since the arguments for $v_{\theta}$ follow via analogous arguments.  

First, we claim that there is a $T_{\theta} > 0$ such that if $T_{\theta} < t \leq T$ and $x \in \mathcal{I}$, then $u^{\theta}(x,t) = u(x,s) - \frac{(t - s)^{2}}{2 \theta}$ for some $s > 0$.  Indeed, for each $t \in [0,T]$, the continuity of $u$ and compactness imply we can fix an $s \in [0,T]$ for which such an equality holds.  It remains to show that $s > 0$ holds if $t$ is large enough.

By definition of $u^{\theta}$, we find
\begin{equation*}
u(x,t) \leq u^{\theta}(x,t) = u(x,s) - \frac{(t - s)^{2}}{2 \theta}.
\end{equation*}
In particular, by the definition of $\omega$, this gives
\begin{equation} \label{E: important_later}
	\frac{(t - s)^{2}}{2 \theta} \leq u(x,s) - u(x,t) \leq \omega(T).
\end{equation}  
That is,
\begin{equation*}
|t - s| \leq  \sqrt{2 \omega(T)\theta}.
\end{equation*} 
From this, we see that if there is a $\delta > 0$ such that $\sqrt{2 \omega(T)\theta} + \delta \leq t$, then $s \geq \delta$, which proves our claim.   Henceforth, set $T_{\theta} = \sqrt{2 \omega(T) \theta}$.

Note that the previous paragraph implies the sub-solution property at the initial time $T_{\theta}$.  Specifically, suppose $x \in \mathcal{I}$ and pick an $s \in [0,T]$ is such that $u^{\theta}(x,T_{\theta}) = u(x,s) - \frac{(T_{\theta}-  s)^{2}}{2\theta}$.  If $\omega$ is the modulus defined in \eqref{E: modulus}, then
\begin{align*}
u^{\theta}(x,T_{\theta}) &= u(x,s) - \frac{(T_{\theta} - s)^{2}}{2\theta} \\
		&\leq u_{0}(x) + (u(x,T_{\theta}) -u_{0}(x)) + (u(x,s) - u(x,T_{\theta})) \\
		&\leq u_{0}(x) + \omega(T_{\theta}) + \omega(|T_{\theta} - s|)
\end{align*}
Since $|T_{\theta} - s| \leq T_{\theta}$ by our previous arguments and $\omega$ is non-decreasing, we conclude $u^{\theta}(\cdot,T_{\theta}) \leq u_{0} + 2\omega(T_{\theta})$.

Next, we show that $u^{\theta}$ satisfies the necessary differential inequalities.  First, assume $t > T_{\theta}$, $x \in \mathcal{I}$, $\varphi \in C^{2,1}(\mathcal{I} \times [0,T])$, and $u^{\theta} - \varphi$ has a local maximum at $(x,t)$. 

Fix $j \in \{1,2,\dots,K\}$ such that $x \in \overline{I_{j}}$, and choose $s > 0$ such that $u^{\theta}(x,t) = u(x,s) - \frac{(t - s)^{2}}{2 \theta}$.  Since $u^{\theta} - \varphi$ has a local maximum at $(x,t)$, it follows that if $(y,r)$ is sufficiently close to $(x,s)$, then
\begin{align*}
u(y,r) - \varphi(y,r + (t - s)) &\leq u^{\theta}(y,r + (t - s)) + \frac{(t - s)^{2}}{2 \theta} - \varphi(y,r + (t - s)) \\
				&\leq u^{\theta}(x,t) - \varphi(x,t) + \frac{(t - s)^{2}}{2 \theta} \\
				&= u(x,s) - \varphi(x,t).
\end{align*}
In other words, $(y,r) \mapsto u(y,r) - \varphi(y,r + (t - s))$ has a local maximum at $(x,s)$.  Therefore, if $x \neq 0$, the sub-solution property of $u$ implies
\begin{equation*}
\varphi_{t}(x,t) + H_{j}(s,x,\varphi_{x_{j}}(x,t)) \leq 0.
\end{equation*}
Recalling from our previous computations that $|t - s| \leq T_{\theta}$, we use \eqref{As: time_bound} to obtain
\begin{equation*}
\varphi_{t}(x,t) + H_{j}(t,x,\varphi_{x_{j}}(x,t)) \leq D T_{\theta}.
\end{equation*}
This establishes the necessary differential inequality in case $x \neq 0$. 

On the other hand, if $x = 0$, the sub-solution property satisfied by $u$ at the junction implies 
\begin{equation*}
\min \left\{ \sum_{i = 1}^{K} \varphi_{x_{i}}(0,t), \varphi_{t}(0,t) + \min_{i} H_{i}(s,0,\varphi_{x_{i}}(0,t)) \right\} \leq 0.
\end{equation*}
Again, replacing $s$ with $t$ in the argument of the Hamiltonians introduces an error, leaving us with the following inequality:
\begin{equation*}
\min \left\{\sum_{i = 1}^{K} \varphi_{x_{i}}(0,t), \varphi_{t}(0,t) + \min_{i} H_{i}(t,0,\varphi_{x_{i}}(0,t)) - DT_{\theta} \right\} \leq 0.
\end{equation*}
This completes the proof that $u^{\theta}$ is a sub-solution of \eqref{E: modified_equation}.

Finally, notice that a straightforward manipulation of \eqref{E: sup_convolution} shows that $|u^{\theta}(x,t) - u^{\theta}(x,s)| \leq \theta^{-1} T |t - s|$.  Thus, $u^{\theta}$ is a viscosity sub-solution of the equation $|u_{t}| = \theta^{-1}T$ in $\mathcal{I} \times [0,T]$.  From this and what was shown earlier in the proof, we see that, for each $i \in \{1,\dots,K\}$, $u^{\theta}$ is a sub-solution of $-\theta^{-1}T + H_{i}(t,x,u_{x_{i}}) - DT_{\theta} = 0$ in $I_{i} \times (T_{\theta},T)$.  Appealing, for example, to Lemma \ref{L: critical} in Appendix \ref{A: dimensionality_reduction} and arguing as in Lemma \ref{L: uniform_Lipschitz}, we conclude that $\text{Lip}(u^{\theta}(\cdot,t))$ is bounded independently of $t \in [T_{\theta},T]$.  In particular, $u^{\theta} \in \text{Lip}(\mathcal{I} \times [T_{\theta},T])$.  
\end{proof}  

\subsection{Comparison lemmas}  This subsection is devoted to the proofs of Lemmas \ref{L: basic_comparison} and \ref{L: zero_behavior}.  

\begin{proof}[Sketch of the proof of Lemma \ref{L: basic_comparison}]  This is more-or-less classical so we will only sketch the proof.  Fix $\delta > 0$.  Formally, if $(x,t) \mapsto u^{\theta}(x,t) - v_{\theta}(x,t) - (2DT_{\theta} + \delta)t$ is maximized at a point $(x_{0},t_{0})$ with $x_{0} \in I_{i}$ for some $i$ and $t_{0} > 0$, then the equations satisfied by $u^{\theta}$ and $v_{\theta}$ imply
\begin{equation*}
	\left\{
		\begin{array}{r}
			u^{\theta}_{t}(x_{0},t_{0}) + H_{i}(t_{0},x_{0},u^{\theta}_{x_{i}}(x_{0},t_{0})) \leq DT_{\theta} \\
		v_{\theta,t}(x_{0},t_{0}) + H_{i}(t_{0},x_{0},v_{\theta,x_{i}}(x_{0},t_{0})) \geq -DT_{\theta}
		\end{array}
	\right.
\end{equation*}   
Subtracting these and using the fact that $u^{\theta}_{x_{i}}(x_{0},t_{0}) = v_{\theta,x_{i}}(x_{0},t_{0})$, we find 
\begin{equation*}
u^{\theta}_{t}(x_{0},t_{0}) - v_{\theta,t}(x_{0},t_{0}) \leq 2 D T_{\theta}.
\end{equation*}
On the other hand, since $t_{0}$ maximizes the function $t \mapsto u^{\theta}(x_{0},t) - v_{\theta}(x_{0},t) - (2DT_{\theta} + \delta)t$, we have $u^{\theta}_{t}(x_{0},t_{0}) - v_{\theta,t}(x_{0},t_{0}) \geq 2D T_{\theta} + \delta$.  Putting the two inequalities together, we conclude $2 D T_{\theta} + \delta \leq 2 D T_{\theta}$, a contradiction.  

To make the sketch rigorous, we argue by contradiction.  Assume that \eqref{E: gronwall} does not hold.  In that case, we can double variables, which is possible since the maximum occurs away from the junction, and then we use the equation to derive a contradiction.  Though we do not provide the details here, similar arguments appear below in Case 3 of the proof of Theorem 2 in Subsection \ref{S: ugly_proof}.  \end{proof} 

We now turn to the proof of Lemma \ref{L: zero_behavior}.  Here we will use a number of technical results concerning semi-convex and semi-concave functions.  First, it will be helpful to recall an important fact concerning touching a semi-convex function above by a semi-concave one.  This is covered by the next proposition.

Before we proceed, we need to define sub- and super-differentials of semi-convex and semi-concave functions, again following \cite{cannarsa sinestrari}.  If $f : [0,T] \to \mathbb{R}$ is semi-convex with linear modulus $\lambda_{f}$ and $t \in [0,T]$, we define $\partial^{-}f(t)$ to be the set of points $a \in \mathbb{R}$ such that
\begin{equation*}
f(s) \geq f(t) + a(s - t) - \frac{\lambda_{f}(s - t)^{2}}{2} \quad \text{if} \, \, s \in [0,T].
\end{equation*}
Similarly, if $g : [0,T] \to \mathbb{R}$ is semi-concave with linear modulus $\lambda_{g}$ and $t \in [0,T]$, then $\partial^{+}g(t)$ is the set of points $b \in \mathbb{R}$ such that
\begin{equation*}
g(s) \leq g(t) + b(s - t) + \frac{\lambda_{g} (s - t)^{2}}{2}.
\end{equation*}

Notice that if $t_{0} \in [0,T]$ and $f$ is as above, then $\partial^{-}f(t_{0})$ is non-empty.  In fact, if we write $\tilde{f}(t) = f(t) + \frac{\lambda t^{2}}{2}$, then a calculus exercise shows $\partial^{-}f(t_{0}) = \partial^{-}\tilde{f}(t_{0}) - \lambda t_{0}$, and it is well-known that $\partial^{-}\tilde{f}(t_{0})$, as the sub-differential of a convex function, is non-empty.  Similarly, in the semi-concave case, super-differentials are always non-empty.

\begin{prop} \label{P: derivatives_exist}  Let $\psi : [0,T] \to \mathbb{R}$ be a smooth function.  Suppose $f : [0,T] \to \mathbb{R}$ is semi-convex with modulus $\lambda_{f} >0$ and $g : [0,T] \to \mathbb{R}$ is semi-concave with modulus $\lambda_{g} > 0$.  If $f - g - \psi$ is maximized at a point $t_{0} \in (0,T)$, then $f$ and $g$ are both differentiable at $t_{0}$ and $f'(t_{0}) - g'(t_{0}) = \psi'(t_{0})$.  If $f - g$ is maximized at $T$, then the one-sided derivatives $f'(T)$ and $g'(T)$ of $f$ and $g$ both exist at $T$ and $f'(T) - g'(T) \geq \psi'(T)$.   \end{prop}  

\begin{proof}  First, consider the case when $t_{0} = T$.  Let $\tilde{f}(t) = f(t) + \frac{\lambda_{f} t^{2}}{2}$ and $\tilde{g}(t) = g(t) - \frac{\lambda_{g}t^{2}}{2}$.  Since $\tilde{f}$ is continuous and convex in $[0,T]$, the one-sided derivative $\tilde{f}'(T)$ exists.  Similarly, $\tilde{g}'(T)$ exists.  Since $f$ and $g$ differ from these functions by quadratic terms, it follows that $f'(T)$ and $g'(T)$ exist as one-sided derivatives as well.  Since $T$ is an endpoint maximum, $f'(T) - g'(T) \geq \psi'(T)$ follows.      

Now consider the case when $t_{0} \in (0,T)$.  Fix $a \in \partial^{-}f(t_{0})$ and $b \in \partial^{+} g(t_{0})$.  Since $f - g - \psi$ is maximized at $t_{0}$, it follows that, for each $t \in (0,T)$, we have
	\begin{align*}
		a(t - t_{0}) - \frac{\lambda_{f}(t - t_{0})^{2}}{2} &\leq f(t) - f(t_{0}) \\
			&\leq (g(t) - g(t_{0})) + (\psi(t) - \psi(t_{0})) \\
			&\leq b(t - t_{0}) + \frac{\lambda_{g}(t - t_{0})^{2}}{2} + \psi'(t_{0})(t - t_{0}) \\
			&\quad + \frac{\psi''(t_{0})}{2}(t - t_{0})^{2} + o(|t - t_{0}|^{2}).
	\end{align*}
Dividing by $t - t_{0}$, considering separately the cases $t > t_{0}$ and $t < t_{0}$, and sending $t \to t_{0}$, we conclude that $a = b + \psi'(t_{0})$.  This proves $\partial^{-}f(t_{0}) = \{b + \psi'(t_{0})\}$ and $\partial^{+}g(t_{0}) = \{a - \psi'(t_{0})\}$.  As in the case of convex or concave functions, the sub-differential (resp.\ super-differential) of a semi-convex (resp.\ semi-concave) function is a singleton at a point if and only if the function is differentiable at that point.  In particular, $f'(t_{0})$ and $g'(t_{0})$ exist and $f'(t_{0}) = g'(t_{0}) + \psi'(t_{0})$.    \end{proof}  

Recall from Proposition \ref{P: sup_inf_convolution} that $t \mapsto u^{\theta}(x,t)$ is semi-convex and $t \mapsto v_{\theta}(x,t)$ is semi-concave, both with linear modulus $\theta^{-1}$, no matter the choice of $x \in \mathcal{I}$.  In what follows, we will write $\partial^{-}u^{\theta}(x,t)$ and $\partial^{+} v_{\theta}(x,t)$ for the sub-differential and super-differential, respectively, of these functions with $x$ fixed.  

By Proposition \ref{P: derivatives_exist}, if $t \mapsto u^{\theta}(0,t) - v_{\theta}(0,t) - (2D T_{\theta} + \delta)t$ is maximized at a point $t \in (0,T]$, then both $u^{\theta}$ and $v_{\theta}$ are differentiable in time at $t_{0}$.  It turns out that semi-convexity and semi-concavity are strong enough properties to enable us to freeze equations \eqref{E: modified_equation} and \eqref{E: modified_equation_2} at $t = t_{0}$ and treat the associated functions $x\mapsto u^{\theta}(x,t_{0})$ and $x \mapsto v_{\theta}(x,t_{0})$ as solutions of the corresponding time-independent equations.  This is made precise in the next result.

\begin{prop} \label{P: time_freezing}  Suppose that $T_{\theta} < T$ and $t \mapsto u^{\theta}(0,t) - v_{\theta}(0,t) - (2DT_{\theta} + \delta)t$ is maximized in the interval $[T_{\theta},T]$ at the point $t_{0} \in (T_{\theta},T)$ (resp.\ $t_{0} = T$).  Let $u^{\theta}_{t}(0,t_{0})$ and $v_{\theta,t}(0,t_{0})$ be the derivatives (resp.\ one-sided derivatives) in time, which exist by Proposition \ref{P: derivatives_exist}.    For each $\zeta > 0$, there is a $\nu \in (0,t_{0})$ such that $x \mapsto u^{\theta}(x,t_{0})$ is a sub-solution of
	\begin{equation} \label{E: time_independent_modified_freeze_1}
		\left\{
			\begin{array}{r l}
				u^{\theta}_{t}(0,t_{0}) + H_{i}(t_{0},x,u^{\theta}_{x_{i}}(\cdot,t_{0})) - (D T_{\theta} + \zeta) = 0 & \text{in} \, \, I_{i}^{\nu} \\
				\sum_{i = 1}^{K} u_{x_{i}}^{\theta}(\cdot,t_{0}) = 0 & \text{on} \, \, \{0\}
			\end{array} 
		\right.
	\end{equation}
and $x \mapsto v_{\theta}(x,t_{0})$ is a super-solution of 
	\begin{equation} \label{E: time_independent_modified_freeze_2}
		\left\{
			\begin{array}{r l}
				v_{\theta,t}(0,t_{0}) + H_{i}(t_{0},x,v_{\theta,x_{i}}(\cdot,t_{0})) + D T_{\theta} + \zeta = 0 & \text{in} \, \, I_{i}^{\nu} \\
				\sum_{i = 1}^{K} v_{\theta,x_{i}}(\cdot,t_{0}) = 0 & \text{on} \, \, \{0\}
			\end{array}
		\right.
	\end{equation}
\end{prop}    

In what follows, we will use the fact that if $f : [0,T] \to \mathbb{R}$ is semi-convex with linear modulus $\lambda_{f} > 0$ and Lipschitz with $|f(t) - f(s)| \leq A|t - s|$ for all $t, s \in [0,T]$, and if $t_{0} \in (0,T]$, then $\partial^{-}f(t_{0}) \subseteq [-A,\infty)$.  In fact, $\partial^{-}f(t_{0}) \subseteq [-A,A]$ whenever $t_{0} \in (0,T)$, and $\partial^{-}f(T) = [f'(T),\infty)$ with $f'(T)$ interpreted as a one-sided derivative.  The proof is an exercise in convex analysis that we leave to the reader.  An analogous statement is true in the semi-concave context. 

\begin{proof}  We only give the details for $u^{\theta}$ since those for $v_{\theta}$ are similar.  We will prove this as an application of the dimensionality reduction lemma appearing in Appendix \ref{A: dimensionality_reduction}.  By that lemma, it is enough to show that for each $\zeta > 0$, there is a $\nu \in (0,t_{0})$ such that $u^{\theta}$ is a sub-solution of
	\begin{equation} \label{E: time_independent_modified_1}
		\left\{ 
			\begin{array}{r l}
				u_{t}^{\theta}(0,t_{0}) + H_{i}(t,x,u_{x_{i}}^{\theta}) - \tilde{C}_{\zeta} = 0 & \text{in} \, \, I_{i}^{\nu} \times (t_{0} - \nu,(t_{0} + \nu) \wedge T) \\
				\sum_{i = 1}^{K} u_{x_{i}}^{\theta} = 0 & \text{on} \, \, \{0\} \times (t_{0} - \nu, (t_{0} + \nu) \wedge T)
			\end{array} 
		\right.
	\end{equation}
where $\tilde{C}_{\zeta} = DT_{\theta} + \zeta$.

To show \eqref{E: time_independent_modified_1} holds, we will prove that for each $\zeta > 0$, there is a $\nu \in (0,t_{0})$ such that if $\max\{|x|, |t - t_{0}|\}< \nu$, $t \leq T$, and $\varphi \in C^{2,1}(\mathcal{I} \times [0,T])$ is such that $u - \varphi$ has a local maximum at $(x,t)$, then $\varphi_{t}(x,t) >  u_{t}^{\theta}(0,t_{0}) - \zeta$.  As usual in continuity statements, it suffices to show that if $(x_{n},t_{n}) \to (0,t_{0})$ and $u - \varphi_{n}$ has a local maximum at $(x_{n},t_{n})$, then $\liminf_{n \to \infty} \varphi_{n}(x_{n},t_{n}) \geq u_{t}^{\theta}(0,t_{0})$.

Indeed, given such a sequence $(x_{n},t_{n},\varphi_{n})$, a straightforward convex analysis exercise shows that $a_{n} : = \varphi_{n,t}(x_{n},t_{n}) \in \partial^{-} u^{\theta}(x_{n},t_{n})$ for each $n \in \mathbb{N}$, and, thus,
	\begin{equation} \label{E: sub_differential_computation_1}
		u^{\theta}(x_{n},t) \geq u^{\theta}(x_{n},t_{n}) + a_{n}(t - t_{n}) - \frac{(t - t_{n})^{2}}{2 \theta} \quad \text{if} \, \, t \in [T_{\theta},T].
	\end{equation}
The remark preceding this proof and the arguments in the proof of Lemma \ref{L: sup_inf_convolution} show that $a_{n} \in \partial^{-} u^{\theta}(x_{n},t_{n}) \subseteq [-\frac{T}{\theta},\infty)$ for all $n$.  Thus, either we have $\liminf_{n \to \infty} a_{n} = \infty$, in which case there is nothing left to show, or else $\liminf_{n\to \infty} a_{n}$ is finite.  In the latter case, let us assume by passing to a subsequence that $a = \lim_{n \to \infty} a_{n}$ exists.  In the limit $n \to \infty$, \eqref{E: sub_differential_computation_1} leads to
	\begin{equation} \label{E: sub_differential_computation_2}
		u^{\theta}(0,t) \geq u^{\theta}(0,t_{0}) + a(t - t_{0}) - \frac{(t - t_{0})^{2}}{2 \theta} \quad \text{if} \, \, t \in [T_{\theta},T].
	\end{equation}
In particular, $a \in \partial^{-}u^{\theta}(0,t_{0})$.  Since the (possibly one-sided) differentiability of $u^{\theta}(0,\cdot)$ at $t_{0}$ implies $\partial^{-} u^{\theta}(0,t_{0}) \subseteq [u_{t}^{\theta}(0,t_{0}),\infty)$, we conclude that $a \geq u_{t}^{\theta}(0,t_{0})$.

From the results of the previous two paragraphs, we see that for a given $\zeta > 0$, there is a $\nu \in (0,t_{0})$ such that if $u^{\theta} - \varphi$ has a maximum at the point $(x_{1},t_{1}) \in I_{i}^{\nu} \times (t_{0} - \nu, (t_{0} + \nu) \wedge T]$, then $\varphi_{t}(x_{1},t_{1}) > u_{t}^{\theta}(0,t_{0}) -\zeta$.  In particular, if $x_{1} \neq 0$, then \eqref{E: modified_equation} implies
	\begin{equation*}
		u_{t}^{\theta}(0,t_{0}) - \zeta + H_{i}(t_{1},x_{1},\varphi_{x_{i}}(x_{1},t_{1})) \leq DT_{\theta},
	\end{equation*}
while the case $(x_{1},t_{1}) \in \{0\} \times (t_{0} - \nu, (t_{0} + \nu)\wedge T)$ yields
	\begin{equation*}
		\min \left\{ \sum_{i = 1}^{K} \varphi_{x_{i}}(x_{1},t_{1}), u_{t}^{\theta}(0,t_{0}) -\tilde{C}_{\zeta} + \min_{i} H_{i}(t_{1},x_{1},\varphi_{x_{i}}(x_{1},t_{1})) \right\} \leq 0.
	\end{equation*}
Therefore, $u^{\theta}$ solves \eqref{E: time_independent_modified_1} as claimed, and $x \mapsto u^{\theta}(x,t_{0})$ satisfies \eqref{E: time_independent_modified_freeze_1} by Lemma \ref{L: critical} of Appendix \ref{A: dimensionality_reduction}.  \end{proof}  

Now we have all the ingredients ready to prove Lemma \ref{L: zero_behavior}.  If the lemma did not hold, then we would find a maximum $t_{0} \in (T_{\theta},T]$ of $t \mapsto u^{\theta}(0,t) - v_{\theta}(0,t) - (2DT_{\theta} + \delta)t$.  Propositions \ref{P: derivatives_exist} and \ref{P: time_freezing} would then allow us to freeze the equations at $t = t_{0}$, and then we could argue using \cite[Lemma 3.1]{time-dependent} as in Section \ref{S: stationary_comparison} to derive a contradiction.   

\begin{proof}[Proof of Lemma \ref{L: zero_behavior}]  Assume that the conclusion of the lemma does not hold, that is,
\begin{equation*}
\max \left\{ u^{\theta}(0,t) - v_{\theta}(0,t) - 2D T_{\theta}t \, \mid \, t \in [T_{\theta},T] \right\} > 4\omega(T_{\theta}).
\end{equation*}  
By Lemma \ref{L: basic_comparison}, it follows that there is a $t_{0} \in [T_{\theta},T]$ and a small $\delta > 0$  such that the function $(x,t) \mapsto u^{\theta}(x,t) - v_{\theta}(x,t) - (2D T_{\theta} + \delta) t$ defined in $\mathcal{I} \times [T_{\theta},T]$ is maximized at $(0,t_{0})$.   

First, consider the case when $t_{0} = T_{\theta}$.  By Lemma \ref{L: sup_inf_convolution}, we have
\begin{align*}
4 \omega(T_{\theta}) &< \max \left\{ u^{\theta}(0,t) - v_{\theta}(0,t) - (2D T_{\theta} + \delta)t \, \mid \, t \in [T_{\theta},T] \right\} \\
	&= u^{\theta}(0,T_{\theta}) - v_{\theta}(0,T_{\theta}) - (2D T_{\theta} + \delta)T_{\theta} \\
	&< 4 \omega(T_{\theta}),
\end{align*}
which is a contradiction.  Therefore, in what follows, we can assume $t_{0} > T_{\theta}$ holds.

Fix $\zeta \in (0,\frac{\delta}{2})$.  By Proposition \ref{P: time_freezing}, there is a $\nu \in (0,t_{0})$ such that $x \mapsto u^{\theta}(x,t_{0})$ satisfies \eqref{E: time_independent_modified_freeze_1}
and $x \mapsto v_{\theta}(x,t_{0})$ satisfies \eqref{E: time_independent_modified_freeze_2}.  Moreover, $x \mapsto u^{\theta}(x,t_{0}) - v_{\theta}(x,t_{0})$ is maximized at $0$.  

Let $a = u^{\theta}_{t}(0,t_{0}) - DT_{\theta} - \zeta$ and $b = v_{\theta,t}(0,t_{0}) + DT_{\theta} + \zeta$, where $u^{\theta}_{t}(0,t_{0})$ and $v_{\theta,t}(0,t_{0})$ are interpreted as one-sided derivatives if $t_{0} = T$.  For each $i \in \{1,2,\dots,K\}$, define $p_{i}$ and $q_{i}$ by 
\begin{equation*}
p_{i} = \limsup_{I_{i} \ni x \to 0} \frac{u^{\theta}(x,t_{0}) - u^{\theta}(0,t_{0})}{x}, \quad q_{i} = \liminf_{I_{i} \ni x \to 0} \frac{v_{\theta}(x,t_{0}) - v_{\theta}(0,t_{0})}{x}.
\end{equation*}  
Notice that $p_{i} \geq q_{i}$ for all $i$ since $0$ is the maximum of $x \mapsto u^{\theta}(x,t_{0}) - v_{\theta}(x,t_{0})$.  

The arguments of the two previous paragraphs show Theorem \ref{T: comparison} applies, and, thus, the following conditions are satisfied:
\begin{itemize}
\item[(i)] If $i \in \{1,2,\dots,K\}$, then $p_{i} \geq q_{i}$ and $a + H_{i}(t_{0},0,p_{i}) \leq 0$.  Furthermore, if $q_{i} > - \infty$, then $b + H_{i}(t_{0},0,q_{i}) \geq 0$.
\item[(ii)] If $(\tilde{p}_{1},\dots,\tilde{p}_{K}) \in \mathbb{R}^{K}$ and $\tilde{p}_{i} \leq p_{i}$ for each $i$, then 
\begin{equation*}
\max \left\{ \sum_{i = 1}^{K} \tilde{p}_{i}, a + \max_{i} H_{i}(t_{0},0,\tilde{p}_{i}) \right\} \leq 0.
\end{equation*}
\item[(iii)] If $(\tilde{q}_{1},\dots,\tilde{q}_{K}) \in \mathbb{R}^{K}$ and $\tilde{q}_{i} \geq q_{i}$ for each $i$, then 
\begin{equation*}
\min \left\{ \sum_{i = 1}^{K} \tilde{q}_{i}, b + \min_{i} H_{i}(t_{0},0,\tilde{q}_{i}) \right\} \geq 0.
\end{equation*}
\end{itemize}
Replacing the vector $(q_{1},\dots,q_{K})$ by a vector $(q_{1}^{*},\dots,q_{K}^{*})$ exactly as in the proof of Theorem \ref{T: existence_uniqueness_inviscid_stationary}, the numbers $a$ and $b$ and vectors $(p_{1},\dots,p_{K})$ and $(q_{1}^{*},\dots,q_{K}^{*})$ satisfy the conditions of \cite[Lemma 3.1]{time-dependent} and, therefore, $a \leq b$. 

Since $a \leq b$, we find 
\begin{align*}
2DT_{\theta} + \delta &\leq u_{t}^{\theta}(0,t_{0}) - v_{\theta,t}(0,t_{0}) \\
	&= (a - b) + 2 DT_{\theta} + 2 \zeta \\
	&\leq 2 DT_{\theta} + 2 \zeta.
\end{align*} 
As a consequence, we deduce $\delta \leq 2 \zeta$, which contradicts the choice of $\zeta$.    
\end{proof}  

\subsection{Proof of Theorem \ref{T: viscous_comparison_time}} 

\begin{proof}[Proof of Theorem \ref{T: viscous_comparison_time}]  If $u$ and $v$ are as in the statement of the theorem, then we pass to the sup-convolution $u^{\theta}$ of $u$ and inf-convolution $v_{\theta}$ of $v$ as before.  Proposition \ref{P: sup_inf_convolution} and Lemmas \ref{L: sup_inf_convolution} and \ref{L: basic_comparison} remain true with no alterations in the proofs.  (Of course, a second-order term appears in the equations satisfied by $u^{\theta}$ and $v_{\theta}$ in this case.)  Although Lemma \ref{L: zero_behavior} is still true, the proof needs to be modified.  At the end of the proof, we show that $a \leq b$ using the idea of the proof of Theorem \ref{T: existence_uniqueness_viscous_stationary}  (i.e.\ ``bending" the sub-solution) instead of Theorem \ref{T: comparison}.      \end{proof}  


\section{Time-Independent Problem: Vanishing Viscosity Limit}  \label{S: stationary_problem}

As explained in the introduction, the error estimate for the vanishing viscosity approximation of \eqref{E: stationary} is obtained by doubling variables and using a test function that forces the variable associated with $u^{\epsilon}$ away from the junction.  This approach combined with an auxiliary lemma on the continuity of the junction condition suffices to carry through a modified version of the classical proof.  

\subsection{Preliminaries}  




In the remainder of the section, $u$ denotes the bounded solution of \eqref{E: stationary} and $u^{\epsilon}$, the bounded solution of \eqref{E: viscous_stationary} for a given $\epsilon > 0$, both of which are unique by Theorems \ref{T: existence_uniqueness_inviscid_stationary} and \ref{T: existence_uniqueness_viscous_stationary} and exist by Perron's Method.  

By \eqref{As: sub-sup}, the constant functions $u_{\text{sub}}(x) = -M$ and $u_{\text{super}}(x) = M$ are bounded sub- and super-solutions, respectively, of \eqref{E: stationary} and \eqref{E: viscous_stationary}.  Therefore, Theorems \ref{T: existence_uniqueness_inviscid_stationary} and \ref{T: existence_uniqueness_viscous_stationary} imply that
	\begin{equation} \label{E: a priori bound}
		\sup \left\{|u^{\epsilon}(x)| \vee |u(x)| \, \mid \, x \in \mathcal{I}, \, \, \epsilon > 0\right\} \leq M.
	\end{equation}

By Lemma \ref{L: uniform_Lipschitz} and \eqref{E: a priori bound}, we can henceforth fix an $L> 0$, independent of $\epsilon > 0$, such that 
	\begin{equation} \label{E: stationary_viscous_lipschitz}
		\text{Lip}(u) \vee \text{Lip}(u^{\epsilon}) \leq L.
	\end{equation}
Later, we will see that we can specifically define $L = \sup \left\{ \text{Lip}(u^{\epsilon}) \, \mid \, \epsilon > 0\right\}$ in agreement with the statement of Theorem \ref{T: rate_stationary_visc}.  See Remark \ref{R: silly} at the end of the proof.  

\subsection{The proof of Theorem \ref{T: rate_stationary_visc}} \label{S: stationary_proof}  In what follows, we will only prove $\sup(u^{\epsilon} - u) \leq C \epsilon^{\frac{1}{2}}$.  The lower bound follows by interchanging the roles of $u$ and $u^{\epsilon}$.  

The following lemma, which reformulates the definition of \eqref{E: stationary}, is instrumental in the proof of Theorem \ref{T: rate_stationary_visc}.  We present its proof in Appendix \ref{A: reformulation}.  In the statement, recall from Subsection \ref{S: notation} that we use the notation $a^{+} = \max\{a,0\}$ and $a^{-} = - \min\{a,0\}$ .

\begin{lemma} \label{L: neumann_type_lemma} A function $u \in C(\mathcal{I})$ solves \eqref{E: stationary} if and only if for each $i \in \{1,2,\dots,K\}$, $u$ solves
\begin{equation*}
u + H_{i}(x,u_{x_{i}}) = 0 \quad \text{in} \, \, I_{i}
\end{equation*}
and, in addition, $u$ satisfies the following two differential inequalities at $0$:
\begin{equation}
	u(0) + \min_{i} \min_{\tilde{\theta} \in [0,1]} H_{i} \left(0, u_{x_{i}}(0) + \tilde{\theta} \left(\sum_{j = 1}^{K} u_{x_{j}}(0) \right)^{-} \right) \leq 0 \label{E: crucial sub} 
\end{equation}
and
\begin{equation}
	u(0) + \max_{i} \max_{\tilde{\theta} \in [0,1]} H_{i} \left(0, u_{x_{i}}(0) - \tilde{\theta} \left(\sum_{j =1}^{K} u_{x_{j}}(0) \right)^{+} \right) \geq 0. \label{E: crucial sup}
\end{equation}
\end{lemma}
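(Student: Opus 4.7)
The plan is to prove each direction of the equivalence separately, concentrating on the sub-solution case \eqref{E: crucial sub}; the super-solution case \eqref{E: crucial sup} is handled symmetrically, with $S^-$ replaced by $S^+$ and max/min interchanged throughout.

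For the direction \emph{new implies standard}, I would proceed by case analysis on $S := \sum_j \varphi_{x_j}(0^-)$. If $S \leq 0$, the first clause of the standard sub-solution condition is trivially satisfied. If $S > 0$, then $S^- = 0$ and \eqref{E: crucial sub} collapses to $u(0) + \min_i H_i(0, \varphi_{x_i}(0^-)) \leq 0$, which is precisely the second clause.

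For the reverse direction, \emph{standard implies new}, the case $S > 0$ is again immediate: the first clause of the standard condition fails, forcing the Hamiltonian clause, which is \eqref{E: crucial sub} at $\theta = 0$. The substantive case is $S \leq 0$. Here the standard condition is trivially satisfied at the junction, so all the information in \eqref{E: crucial sub} must come from the equation on the interiors of the edges. My approach is to perturb $\varphi$ into a family $\tilde\varphi_\eta$ by adding per-edge linear corrections $\alpha_j(\eta) x$ with $\alpha_j(\eta) \geq 0$ and $\sum_j \alpha_j(\eta) = S^- + \eta$, so that the shifted flux is $\eta > 0$. Generically, such corrections destroy the local max at $0$ and push the maximum of $u - \tilde\varphi_\eta$ into some edge $\overline{I_{j(\eta)}}$; applying the interior sub-solution condition of $u$ at the displaced max point $x_\eta$ and passing to the limit $\eta \to 0^+$ should identify an index $i^\ast$ and parameter $\theta^\ast \in [0, 1]$ realizing \eqref{E: crucial sub}.

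The main obstacle I anticipate is controlling the migration of this maximum: one must verify that $x_\eta \to 0$ and identify the limiting slope as an element of the interval $[\varphi_{x_{i^\ast}}(0^-), \varphi_{x_{i^\ast}}(0^-) + S^-]$, which is precisely the one parameterized by $\theta \in [0, 1]$ in \eqref{E: crucial sub}. This rests on the Lipschitz estimate for $u$ (to prevent escape of $x_\eta$ from a neighborhood of $0$), the coercivity \eqref{As: coercive} (to retain compactness of the limit slopes), and the continuity \eqref{As: continuity} of $H_i$ (to take the limit in the resulting Hamiltonian inequality). A strict-max perturbation of $\varphi$, e.g.\ replacing $\varphi$ by $\varphi + \delta |x|^2$ on each edge for small $\delta > 0$, should ensure both uniqueness and proximity of $x_\eta$ to the origin before the limit $\delta \to 0^+$ is taken at the end.
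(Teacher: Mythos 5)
Your two easy directions are correct and match the paper's. In the substantive case $S\leq 0$, however, the paragraph you flag as ``the main obstacle'' is not a technicality to be managed by strict-max perturbations and compactness---it is the entire content of the lemma, and the argument you sketch has a genuine gap. If you perturb by $\alpha_j(\eta)x$ with $\alpha_j(\eta)\to c_j$ and $\sum_j c_j = S^-$, then the limiting test-function slope at the displaced maximum in edge $j$ would be $\xi_j + c_j$ (assuming the maximum even converges to $0$), but this value may lie outside the one-sided super-jet $J_j^+u(0)$, and in that case there is no reason the inequality $u(0)+H_j(0,\xi_j+c_j)\leq 0$ should hold: the interior viscosity inequality constrains $H_j$ only at slopes that actually arise from touching test functions, and the limiting such slope is the extremal element of $J_j^+u(0)$, not $\xi_j+c_j$. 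Coercivity and continuity give you that some limit slope exists and that you can pass to the limit in $H_j$; they do not tell you the limit is the one you want. Moreover, the Lipschitz bound does not by itself force $x_\eta\to 0$: a linear correction of fixed size can push the maximum far into $I_j$ unless its magnitude is coupled to a shrinking localization window.

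The paper closes both gaps with Lemma~\ref{L: slopes} (the argument is Lions' from the Neumann setting). Setting $\lambda_{j,0}:=\sup\{\lambda\geq 0 : \xi_j+\lambda\in J_j^+u(0)\}$, one picks a test function whose slope at the origin is already the extremal value $\xi_j+\lambda_{j,0}$, then adds a correction $\alpha(\delta)x$ whose size is chosen simultaneously with a shrinking window $[-\delta,0]$ so that the displaced maximum is forced strictly into $(-\delta,0)$ while $\alpha(\delta)\to 0$; this yields $u(0)+H_j(0,\xi_j+\lambda_{j,0})\leq 0$ whenever $\lambda_{j,0}<\infty$. Proposition~\ref{P: alternative} then finishes by a case split on the sizes of the $\lambda_{j,0}$: if $\lambda_{j,0}>S^-$ for some $j$, shift $\xi_j$ by $S^-$ (so the shifted tuple remains in the super-jets and the flux is nonnegative) and invoke the standard junction condition, with a separate vanishing-$\zeta$ argument in the degenerate subcase of exactly zero flux; if $\lambda_{j,0}\leq S^-$ for all $j$, write $\lambda_{j,0}=\theta_j S^-$ with $\theta_j\in[0,1]$ and apply Lemma~\ref{L: slopes} directly. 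To complete your proposal you would need to identify $\lambda_{j,0}$ as the controlling quantity and build the perturbation around it, rather than distributing $S^-$ among the edges by an ad hoc choice of the $c_j$.
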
  

The exact meaning of the differential inequalities in the lemma is provided in Appendix \ref{A: reformulation}.  

It follows from the lemma that if $u - \varphi$ has a minimum at $0$ and $\sum_{i = 1}^{K} \varphi_{x_{i}}(0) \leq \delta$, then there is an index $j$ such that
\begin{equation*}
	u(0) + H_{j}(0,\varphi_{x_{j}}(0)) \geq  -\omega(\delta),
\end{equation*}
where $\omega$ is a modulus of continuity for $H_{j}$ in the $\delta$-neighborhood of $\varphi_{x_{j}}(0)$.  In this sense, Lemma \ref{L: neumann_type_lemma} is a continuity property of the junction condition.

In what follows, we would like to use a variable doubling approach by studying the solutions near a maximum point $(\bar{x}_{i},\bar{y}_{i})$ of the function $\Phi_{i}$ on $\overline{I_{i}} \times \overline{I_{i}}$ given by
\begin{equation*}
	\Phi_{i}(x,y) = u^{\epsilon}(x) - u(y) - \frac{(x - y)^{2}}{2  \sqrt{\epsilon}}.
\end{equation*}
The goal is to write, as in the classical proof,
\begin{equation}
	u^{\epsilon}(\bar{x}_{i}) - \sqrt{\epsilon} + H_{i} \left(\bar{x}_{i},\frac{\bar{x}_{i} - \bar{y}_{i}}{\sqrt{\epsilon}} \right) \leq 0, \label{E: problem1}
\end{equation}
and
\begin{equation}
	u(\bar{y}_{i}) + H_{i} \left(\bar{y}_{i},\frac{\bar{x}_{i} - \bar{y}_{i}}{\sqrt{\epsilon}}\right) \geq 0. \label{E: problem2}
\end{equation}
However, there are three problems with this approach.  First, $u^{\epsilon}$ may not satisfy \eqref{E: problem1} if $\bar{x}_{i} = 0$.  Secondly, the Kirchoff condition implies that if $\bar{y}_{i} = 0$ for each $i$, then \eqref{E: problem2} may only hold for a subset of the indices $i$.  As indicated in the introduction, the first issue can be remedied by tilting the test function.  Moreover, if $\bar{x}_{i} < 0$ independently of $i$, then Lemma \ref{L: neumann_type_lemma} implies there is a $j$ such that \eqref{E: problem1} and \eqref{E: problem2} both approximately hold with $i = j$, and, thus, we can estimate $u^{\epsilon}(\bar{x}_{j}) - u(\bar{y}_{j})$.  In other words, the second issue is corrected if we not only guarantee that $\bar{x}_{i} < 0$ for some $i$, but even that $\bar{x}_{i} < 0$ for all $i$.  Lemma \ref{L: magic} below accomplishes this.  Finally, the third issue to address is the unboundedness of the domain.  Since we are working with infinite rays, the function $\Phi_{i}$ may not attain its supremum.  Therefore, as is customary in the theory of viscosity solutions, we will add penalization terms to correct this.

Now that we have summarized the difficulties involved in analyzing the error near the junction, we provide the details.  

Henceforth we let $C_{i}(\delta) = u^{\epsilon}_{x_{i}}(0) + \delta$, which is well-defined by Lemma \ref{L: differentiability}.  Given $\delta,\alpha \in (0,1)$, we define $\Phi_{i,\delta,\alpha} : \overline{I_{i}} \times \overline{I_{i}} \to \mathbb{R}$ by
\begin{equation*}
	\Phi_{i,\delta,\alpha}(x,y) = u^{\epsilon}(x) - u(y) - \frac{(x - y)^{2}}{2 \sqrt{\epsilon}} - C_{i}(\delta)(x - y) - \alpha y^{2}.
\end{equation*}
The key lemma that allows us to control the behavior at the junction is stated next.  

\begin{lemma}  \label{L: magic} If $(x_{i}(\delta,\alpha),y_{i}(\delta,\alpha))$ is a global maximum of $\Phi_{i,\delta,\alpha}$, then $x_{i}(\delta,\alpha) < 0$.\end{lemma}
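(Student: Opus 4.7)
The plan is to argue by contradiction using the first-order necessary condition at the right endpoint of $\overline{I_i}$. The tilt term $C_i(\delta)(x-y)$ in $\Phi_{i,\delta,\alpha}$ is engineered so that its effect on $\partial_x$ at the junction exactly cancels the natural slope $u^{\epsilon}_{x_i}(0)$, leaving a strictly positive mismatch proportional to $\delta$ that no admissible $y \leq 0$ can balance.

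In detail, suppose for contradiction that $x_i(\delta,\alpha) = 0$, and write $y_i = y_i(\delta,\alpha)$. Because $u^{\epsilon}$ lies in $C^2_*(\cup \overline{I_i})$, the function
\begin{equation*}
g(x) := \Phi_{i,\delta,\alpha}(x,y_i) = u^{\epsilon}(x) - u(y_i) - \frac{(x-y_i)^2}{2\sqrt{\epsilon}} - C_i(\delta)(x - y_i) - \alpha(x^2 + y_i^2)
\end{equation*}
is $C^1$ on $\overline{I_i}$ up to and including the endpoint $0$. Since $\overline{I_i} = (-\infty, 0]$ and $g$ attains its maximum at the right endpoint $x = 0$, the one-sided derivative there must satisfy $g'(0^-) \geq 0$.

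Computing and using $C_i(\delta) = u^{\epsilon}_{x_i}(0) + \delta$, the $u^{\epsilon}_{x_i}(0)$ terms cancel, and since the $\alpha$ contribution vanishes at $x = 0$, one obtains
\begin{equation*}
g'(0^-) = u^{\epsilon}_{x_i}(0^-) + \frac{y_i}{\sqrt{\epsilon}} - C_i(\delta) = \frac{y_i}{\sqrt{\epsilon}} - \delta.
\end{equation*}
The condition $g'(0^-) \geq 0$ thus forces $y_i \geq \delta \sqrt{\epsilon} > 0$, which is incompatible with $y_i \in \overline{I_i} = (-\infty, 0]$. This contradiction shows that $x_i(\delta,\alpha) < 0$, as claimed. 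There is essentially no obstacle here: the whole content of the lemma is that the specific choice of $C_i(\delta)$ turns the endpoint first-order condition into the one-dimensional Neumann-type inequality $y_i/\sqrt{\epsilon} \geq \delta$, which is immediately ruled out by the geometry of the edge.
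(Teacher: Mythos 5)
Your proof is correct and takes essentially the same approach as the paper's: both rest on the observation that if the maximizer $x_{i}(\delta,\alpha)$ were at the endpoint $0$, the one-sided first-order condition $g'(0^{-})\geq 0$ would force $y_{i}/\sqrt{\epsilon}\geq\delta$, which is incompatible with $y_{i}\leq 0$. The paper phrases this as the first-order inequality at a general maximizer together with the remark that $-y_{i}/\sqrt{\epsilon}+C_{i}(\delta)>u^{\epsilon}_{x_{i}}(0)$, while you argue directly by contradiction; the computation and the role of the tilt $C_{i}(\delta)$ are identical.
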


\begin{proof} Since $x \mapsto \Phi_{i,\delta,\alpha}(x,y_{i}(\delta,\alpha))$ is maximized at $x_{i}(\delta,\alpha)$, we observe that 
\begin{equation*}
\frac{x_{i}(\delta,\alpha) - y_{i}(\delta,\alpha)}{\sqrt{\epsilon}} + C_{i}(\delta) \leq u^{\epsilon}_{x_{i}}(x_{i}(\delta,\alpha)).
\end{equation*} 
Thus, the inequality $-\frac{y_{i}(\delta,\alpha)}{\sqrt{\epsilon}} + C_{i}(\delta) > u^{\epsilon}_{x_{i}}(0)$ implies $x_{i}(\delta,\alpha) < 0$. \end{proof}

We are now in a position to complete the error estimate.  Lemma \ref{L: magic} addresses the problems mentioned earlier.  Since $x_{i}(\delta,\alpha) < 0$ independently of $i$, there is always an $i$ such that equations \eqref{E: problem1} and \eqref{E: problem2} hold with an error term introduced by Lemma \ref{L: neumann_type_lemma}.  Note that the error term is small since $\sum_{i = 1}^{K} C_{i}(\delta) = K \delta$ follows from Lemma \ref{L: classical_kirchoff}.  The remainder of the error estimate therefore follows from a combination of classical viscosity arguments and new but elementary ideas.

\begin{proof}[Proof of Theorem 1]  As before, for each index $i$, we let $(x_{i}(\delta,\alpha),y_{i}(\delta,\alpha))$ denote a global maximum of $\Phi_{i,\delta,\alpha}$, and we fix $k \in \{1,2,\dots,K\}$ such that
\begin{equation*}
\sup \left\{u^{\epsilon}(x) - u(x) \, \mid \, x \in \mathcal{I} \right\} = \sup \left\{u^{\epsilon}(x) - u(x) \, \mid \, x \in \overline{I_{k}} \right\}.
\end{equation*}
Let us begin by estimating $|y_{i}(\delta,\alpha)|$, and $|x_{i}(\delta,\alpha) - y_{i}(\delta,\alpha)|$.  We will write $x_{i} = x_{i}(\delta,\alpha)$ and $y_{i} = y_{i}(\delta,\alpha)$ when there is no risk of confusion.  

Given $i$, $\alpha$, $\delta$, we can write $\Phi_{i,\delta,\alpha}(0,0) \leq \Phi_{i,\delta,\alpha}(x_{i}(\delta,\alpha),y_{i}(\delta,\alpha))$ and, thus, by \eqref{E: a priori bound} and \eqref{E: stationary_viscous_lipschitz}, we obtain
\begin{equation*}
\alpha y_{i}^{2} + \frac{(x_{i}- y_{i})^{2}}{2 \sqrt{\epsilon}} \leq 4M + (L + 1)|x_{i} - y_{i}|.
\end{equation*}  
Appealing to Young's inequality, we find there is a $C > 0$ independent of $(\epsilon,\alpha)$ such that
\begin{equation*}
\alpha y_{i}(\delta,\alpha)^{2} + \frac{(x_{i}(\delta,\alpha) - y_{i}(\delta,\alpha))^{2}}{2 \sqrt{\epsilon}} \leq C.
\end{equation*}
In particular, this shows $x_{i}(\delta,\alpha)$ is close to the junction if $y_{i}(\delta,\alpha) = 0$ and, moreover, uniformly with respect to $\epsilon$, 
\begin{equation*}
\alpha |y_{i}(\delta,\alpha)| \to 0 \quad \text{as} \, \, \alpha \to 0^{+}.
\end{equation*} 

In view of Lemma \ref{L: magic}, there are only two cases to consider, namely, (i) there is a $j$ and a subsequence $(\delta_{n},\alpha_{n}) \to (0,0)$ such that $y_{j}(\delta_{n},\alpha_{n}) < 0$ for all $n$ and (ii) $y_{i}(\delta,\alpha) = 0$ independently of $i$ whenever $\delta + \alpha$ is sufficiently small.  Case (i) can be addressed using classical arguments with a minor twist.  We postpone it for now and consider instead case (ii).

As the estimates above show, in case (ii), $x_{i}(\delta,\alpha)$ remains bounded independently of $\alpha,\epsilon$.  Therefore, provided $\delta$ is sufficiently small, we can send $\alpha \to 0^{+}$ and appeal to compactness and the arguments in Lemma \ref{L: magic} to find a limit $x_{i}(\delta) < 0$ such that $(x_{i}(\delta),0)$ maximizes the function 
\begin{equation*}
\Phi_{i,\delta}(x,y) = u^{\epsilon}(x) - u(y) - \frac{(x - y)^{2}}{2 \sqrt{\epsilon}} - C_{i}(\delta)(x - y).
\end{equation*}

It remains to use the equations to bound $u^{\epsilon}(x_{i}(\delta)) - u(0)$.  However, as already remarked in the introduction, it may be necessary to transfer bounds obtained in one edge of the network to the other edges.  Thus, the proof proceeds in two steps.  

In the first step, we observe that since the function 
\begin{equation*}
y \mapsto u(y) - u^{\epsilon}(x_{i}(\delta)) + \frac{(y - x_{i}(\delta))^{2}}{2 \sqrt{\epsilon}} - C_{i}(\delta)(y - x_{i}(\delta))
\end{equation*}
has a minimum at $0$ independently of $i$, Lemma \ref{L: neumann_type_lemma} implies that there is a $j$ and a $\tilde{\theta} \in [0,1]$ such that
\begin{equation*}
u(0) + H_{j} \left(0,C_{j}(\delta) + \frac{x_{j}(\delta)}{\sqrt{\epsilon}} - \tilde{\theta} F(\delta)^{+}\right) \geq 0, 
\end{equation*}  
where 
\begin{equation*}
F(\delta) = \sum_{i = 1}^{K} \left(C_{i}(\delta) + \frac{x_{i}(\delta)}{\sqrt{\epsilon}}\right).
\end{equation*}  It follows from Lemma \ref{L: classical_kirchoff} that $F(\delta)^{+} < K\delta$.  Moreover, since $x \mapsto \Phi_{j,\delta}(x,0)$ is maximized at $x_{j}(\delta)$ for each $j$, the bound $\text{Lip}(u^{\epsilon}) \leq L$ implies $|C_{j}(\delta) + \frac{x_{j}(\delta)}{\sqrt{\epsilon}}| \leq L$ independently of $(j,\delta,\epsilon)$.  Thus, appealing to \eqref{As: continuity}, we find a modulus $\omega$ such that
	\begin{equation} \label{E: reviewer_eq_1}
		u(0) + H_{j}\left(0,C_{j}(\delta) + \frac{x_{j}(\delta)}{\sqrt{\epsilon}}\right) \geq - \omega(K\delta).
	\end{equation} 
At the same time, since $x \mapsto \Phi_{j,\delta}(x,0)$ is maximized at $x_{j}(\delta)$, the equation for $u^{\epsilon}$ yields
	\begin{equation} \label{E: reviewer_eq_2}
		u^{\epsilon}(x_{j}(\delta)) - \sqrt{\epsilon} + H_{j} \left(x_{j}(\delta),C_{j}(\delta) + \frac{x_{j}(\delta)}{\sqrt{\epsilon}}\right) \leq 0.
	\end{equation}
Finally, subtracting \eqref{E: reviewer_eq_1} and \eqref{E: reviewer_eq_2} and appealing again to \eqref{As: continuity}, we find
\begin{equation} \label{E: step1}
u^{\epsilon}(x_{j}(\delta)) - u(0) \leq \sqrt{\epsilon} + \omega(K \delta) + C |x_{j}(\delta)|,
\end{equation} 
where $C$ depends only on the modulus of continuity of $H_{j}$ in $\overline{I_{j}} \times [-L,L]$.

In the second step of the proof, we use \eqref{E: step1}, which a priori only provides an error estimate in $\overline{I_{j}}$, to obtain a global error estimate.  This can be done using the fact that $x_{j}(\delta)$ is close to zero and $u$ is uniformly Lipschitz.  

Note that we previously established that $|x_{j}(\delta)| = O(\epsilon^{\frac{1}{4}})$.  However, since $u^{\epsilon}$ is Lipschitz, the inequality $\Phi_{j,\delta}(0,0) \leq \Phi_{j,\delta}(x_{j}(\delta),0)$ implies $|x_{j}(\delta)| = O(\epsilon^{\frac{1}{2}})$.  Therefore, we can improve \eqref{E: step1} to 
\begin{equation*}
u^{\epsilon}(0) - u(0) \leq C\sqrt{\epsilon} + \omega(K \delta).
\end{equation*} 
Since $0$ is in each edge of the network, the same reasoning yields
\begin{equation*}
u^{\epsilon}(x_{i}(\delta)) - u(0) \leq C \sqrt{\epsilon} + \omega(K \delta) \quad \text{if} \, \, i \in \{1,2,\dots,K\}.
\end{equation*} 
Finally, from the inequality $\Phi_{i,\delta}(x,x) \leq \Phi_{i,\delta}(x_{i}(\delta),0)$, we obtain
\begin{equation*}
u^{\epsilon}(x) - u(x) \leq C \sqrt{\epsilon} + \omega(K \delta).
\end{equation*}
The proof of the upper bound in case (ii) thus follows after sending $\delta \to 0^{+}$.  

Finally, we address case (i), when there is a $j$ such that $y_{j}(\delta_{n},\alpha_{n}) < 0$ for some sequence $(\delta_{n},\alpha_{n}) \to (0,0)$.  If $j = k$, then the standard proof works here (cf.\ \cite{old_paper}).  Therefore, it's only necessary to analyze what happens when $j \neq k$.  In particular, we can assume $y_{k}(\delta_{n},\alpha_{n}) = 0$ for all $n$.  As we already saw in case (ii) above, we can leverage the Lipschitz continuity of $u^{\epsilon}$ to find $|x_{k}(\delta_{n},\alpha_{n})| = O(\sqrt{\epsilon})$.  Thus,
\begin{equation} \label{E: tedious1}
u^{\epsilon}(x_{k}(\delta_{n},\alpha_{n})) - u(0) \leq C \sqrt{\epsilon} + u^{\epsilon}(0) - u(0).
\end{equation}
By classical arguments restricted to the edge $I_{j}$, we find, after sending $n \to \infty$,
\begin{equation} \label{E: tedious2}
u^{\epsilon}(0) - u(0) \leq C\sqrt{\epsilon}.
\end{equation} 
It follows from \eqref{E: tedious1} and \eqref{E: tedious2} and the inequality $\Phi_{k,\delta,\alpha}(x,x) \leq \Phi_{k,\delta,\alpha}(x_{k},y_{k})$ that
\begin{equation*}
\sup \left\{u^{\epsilon}(x) - u(x) \, \mid \, x \in \overline{I_{k}}\right\} \leq C \sqrt{\epsilon}.
\end{equation*}
In view of the choice of $k$, this completes the proof in case (i).  
 \end{proof}  
 
 \begin{remark} \label{R: silly}  Since $u^{\epsilon} \to u$ pointwise in $\mathcal{I}$, it follows that 
 	\begin{equation*}
		\text{Lip}(u) \leq \sup \left\{ \text{Lip}(u^{\epsilon}) \, \mid \, \epsilon > 0\right\}.
	\end{equation*}  
Therefore, in \eqref{E: stationary_viscous_lipschitz}, we could have defined $L : = \sup \left\{ \text{Lip}(u^{\epsilon}) \, \mid \, \epsilon > 0\right\}$.  Knowing this, we can redo the proof using this specific value of $L$.  This justifies our claims concerning the dependence of the constant in Theorem \ref{T: rate_stationary_visc}.  \end{remark}
 
 \section{Time-Independent Problem: Finite-Difference Approximation} \label{S: stationary_diff}  
 
 The arguments in the previous section extend in a straightforward manner to monotone finite difference schemes approximating solutions of \eqref{E: stationary}.  We describe the schemes in question next.
 
 To simplify the construction, we make here and in Section \ref{S: cauchy_problem_diff} the assumption that the Hamiltonians are in separated form, that is,
 \begin{equation} \label{As: standard}
 H_{i}(x,p) = H_{i}(p) - f_{i}(x).
 \end{equation}
 The general case follows from minor technical modifications.  
 
 \subsection{Preliminaries} \label{S: stationary_diff_explanation}  Given scales $\Delta x$, we discretize each edge $I_{i}$ as $J_{i} = \{0,1,2,\dots\}$ with the point $m \in J_{i}$ corresponding to $- m \Delta x \in I_{i}$.  The discretized junction is the union $\mathcal{J} := \bigcup_{i = 1}^{K} J_{i}$ glued at $0$.  The finite difference scheme generates a function $U : \mathcal{J} \to \mathbb{R}$ satisfying the difference equations
\begin{equation} \label{E: difference_equation}
\left\{ \begin{array}{r l}
	U(m) + F_{i}(D^{+}U(m),D^{-}U(m)) = f_{i}(-m \Delta x) & \quad \text{if} \, \, m \in J_{i} \setminus \{0\} \\
	U(0) = \frac{1}{K} \sum_{i = 1}^{K} U(1_{i})
\end{array} \right.
\end{equation}
Here we denote by $1_{i}$ the point $1$ in $J_{i}$ and the operators $D^{+},D^{-},F_{i}$ are given by
\begin{equation} \label{E: diff} 
D^{+}U(m) = \frac{U(m - 1) - U(m)}{\Delta x}, \quad D^{-}U(m) = \frac{U(m) - U(m + 1)}{\Delta x}, 
\end{equation}
and
\begin{equation} \label{E: F_op}
F_{i}(p_{1},p_{2}) = - \frac{\epsilon}{\Delta x} \left(p_{1} - p_{2} \right) + G_{i}\left(p_{1},p_{2} \right). 
\end{equation}
The numerical Hamiltonians $G_{1},\dots,G_{K}$ approximate the Hamiltonians and $\epsilon > 0$ is a parameter.  For each $i$, we impose the following assumptions:
\begin{equation}
G_{i} : \mathbb{R} \times \mathbb{R} \to \mathbb{R} \quad \text{is uniformly Lipschitz continuous} \label{As: num_Ham_lip} 
\end{equation}
and there is an $L_{c} > 0$ such that
\begin{equation}
G_{i}(p,p) = H_{i}(p) \quad \text{if} \, \, p \in [-L_{c},L_{c}], \, \, i \in \{1,2,\dots,K\}. \label{As: num_Ham_consistent}
\end{equation}

Following \cite{old_paper} and \cite{numerical}, we constrain the artificial viscosity $\epsilon$ in order to ensure that the scheme satisfies the classical Courant-Friedrichs-Lewy (CFL) condition, which ensures that the scheme is monotone.  The details are discussed below.

In addition to well-posedness of the scheme, we also prove that the solution is uniformly Lipschitz with respect to the approximation parameters.  We say that a function $V : \mathcal{J} \to \mathbb{R}$ is uniformly Lipschitz continuous if 
\begin{equation*}
\text{Lip}(V) := \sup \left\{ |V(m + 1) - V(m)| \, \mid \, m \in J_{i}, i \in \{1,\dots,K\} \right\} < \infty.
\end{equation*}
The result concerning well-posedness and existence of Lipschitz solutions is stated next.  

In what follows, we define $\{L_{G}^{(1)},\dots,L_{G}^{(K)}\}$ and $L_{G}$ by 
	\begin{align} 
		L_{G}^{(i)} &= \sup \left\{ \frac{|G_{i}(p_{1},p_{2}) - G_{i}(q_{1},q_{2})|}{|p_{1} - q_{1}| + |p_{2} - q_{2}|} \, \mid \, (p_{1},p_{2}),(q_{1},q_{2}) \in \mathbb{R}^{2} \right\}, \\
		L_{G} &= \max\{L_{G}^{(1)},\dots,L_{G}^{(K)}\}.\label{As: LG_constant}
	\end{align}

\begin{theorem} \label{T: stationary_scheme}  There is a constant $\overline{L}_{c} > 0$, independent of $\epsilon$ and $\Delta x$, such that if $\epsilon \geq 2L_{G} \Delta x$ and $L_{c} \geq \overline{L}_{c}$, then the difference equation \eqref{E: difference_equation} is monotone and has a unique bounded solution $U$ satisfying $\sup \left\{|U(m)| \, \mid \, m \in \mathcal{J} \right\} \leq M$ and $\text{Lip}(U) \leq \overline{L}_{c} \Delta x$.  \end{theorem}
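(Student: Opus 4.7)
The plan is to present \eqref{E: difference_equation} as a fixed point of a monotone iteration, obtain existence and the $L^{\infty}$ bound by a Perron-type argument, and establish the Lipschitz estimate by a discrete maximum principle in the interior of each edge combined with a coercivity argument at the junction.

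For monotonicity and existence, let $L_{1}$ be a common Lipschitz constant for $G_{1},\ldots,G_{K}$ coming from \eqref{As: num_Ham_lip}, and introduce the relaxation operator
\[
	T_{\tau}U(m) = U(m) - \tau\bigl[U(m) + F_{i}(D^{+}U(m), D^{-}U(m)) - f_{i}(-m\Delta x)\bigr]
\]
for $m \in J_{i} \setminus \{0\}$, together with $T_{\tau}U(0) = \frac{1}{K}\sum_{i} U(1_{i})$. Under $\epsilon \geq 2L_{1}\Delta x$, the function $F_{i}(p_{1},p_{2}) = -(\epsilon/\Delta x)(p_{1} - p_{2}) + G_{i}(p_{1},p_{2})$ is nonincreasing in $p_{1}$ and nondecreasing in $p_{2}$, so $\partial_{U(m\pm 1)}T_{\tau}U(m) \geq 0$; a CFL-type bound on $\tau$ provided by \eqref{As: CFL_stationary} handles $\partial_{U(m)}T_{\tau}U(m) \geq 0$; and the averaging junction is trivially monotone. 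Thus $T_{\tau}$ is a monotone operator. Assumption \eqref{As: sub-sup} gives $|H_{i}(0) - f_{i}(x)| = |H_{i}(x,0)| \leq M$, so in particular $|H_{i}(0)| \leq M + \sup|f_{i}|$ and the truncation in \eqref{As: num_Ham_consistent} is inactive at $p = 0$, i.e.\ $G_{i}(0,0) = H_{i}(0)$. The constants $\pm M$ are therefore, respectively, super- and sub-solutions of \eqref{E: difference_equation}; iterating $T_{\tau}$ from $U^{(0)} \equiv -M$ produces a monotone increasing sequence of values trapped between $-M$ and $M$, whose pointwise limit is a fixed point of $T_{\tau}$ and hence a bounded solution $U$ with $\sup|U| \leq M$. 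Uniqueness follows from a discrete comparison principle that exploits the monotonicity of the scheme on each edge and the averaging junction.

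The main work is the Lipschitz estimate. Setting $W(m) = U(m+1) - U(m)$ on each edge, at an interior local maximum $m^{*} \geq 1$ of $W$, subtracting the scheme at $m^{*}$ and $m^{*}+1$ and using the one-sided monotonicity of $F_{i}$ yields
\[
	W(m^{*}) \leq f_{i}(-(m^{*}+1)\Delta x) - f_{i}(-m^{*}\Delta x) \leq \text{Lip}(f_{i})\Delta x,
\]
and likewise for $-W$. Consequently, on edge $i$ the quantity $\sup_{m}|W(m)|$ is controlled by $\max(|A_{i}|, \text{Lip}(f_{i})\Delta x)$, where $A_{i} = U(1_{i}) - U(0)$, and the problem reduces to bounding the junction jumps $A_{i}$. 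The averaging junction enforces $\sum_{i} A_{i} = 0$, and evaluating the scheme at $m = 1_{i}$ for the index attaining $\max_{i}|A_{i}|$, together with the Lipschitz bound on $G_{i}$, produces an inequality of the form $(\epsilon/\Delta x - L_{1})|A_{i}|/\Delta x \leq C$ modulo terms involving $|D^{-}U(1_{i})|$. Since $\epsilon \geq 2L_{1}\Delta x$ gives $\epsilon/\Delta x - L_{1} \geq L_{1} > 0$, this furnishes the coercivity needed to conclude $|A_{i}| \leq C\Delta x$.

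The main obstacle is this last step: the interior and junction bounds are coupled through the neighbor quantity $D^{-}U(1_{i})$, since the interior estimate is only conditional on $|A_{i}|$ while the junction estimate requires control of $|D^{-}U(1_{i})|$. The resolution relies on the strict margin provided by $\epsilon \geq 2L_{1}\Delta x$, which decouples the two sides of the estimate and closes the bootstrap, either via a case analysis splitting on the signs of the difference quotients at the junction or by exploiting the constraint $\sum_{i} A_{i} = 0$ to transfer information between edges.
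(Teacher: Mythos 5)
Your existence, monotonicity and $L^{\infty}$ arguments are sound and run parallel to the paper's (the paper uses a discrete Perron construction rather than a relaxation iteration, but both produce the same bounded fixed point once monotonicity of $F_{i}$ is established from $\epsilon \geq 2L_{1}\Delta x$). A minor slip: the relaxation parameter $\tau$ is not constrained by \eqref{As: CFL_stationary}; it is an auxiliary parameter you are free to choose small, independently of the CFL assumption on $(\epsilon,\Delta x)$.

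The Lipschitz estimate is where the proposal has a genuine gap, in two places. First, the discrete-maximum-principle argument for $W(m)=U(m+1)-U(m)$ presumes that $\sup_{m}|W(m)|$ is attained at some finite $m^{*}$. The edges $J_{i}$ are infinite half-lines, and boundedness of $U$ alone does not yield that the supremum of $W$ is realized; without a penalization (e.g.\ $-\alpha m$ followed by $\alpha\to 0^{+}$) or a separate argument controlling $\limsup_{m\to\infty}|W(m)|$, the interior estimate $W(m^{*})\leq \operatorname{Lip}(f_{i})\Delta x$ does not extend to $\sup|W|$. Second, and more seriously, the junction bound as stated does not close. Writing the scheme at $m=1_{i}$ and isolating the artificial viscosity term gives
\[
U(1_{i}) + \frac{\epsilon}{(\Delta x)^{2}}\bigl(A_{i}-W(1_{i})\bigr) + G_{i}\!\left(-\tfrac{A_{i}}{\Delta x},-\tfrac{W(1_{i})}{\Delta x}\right) = f_{i}(-\Delta x),
\]
and when $A_{i}$ and $W(1_{i})$ are close (which the interior estimate does not forbid) the coefficient of $\epsilon/(\Delta x)^{2}$ is small, so the margin $\epsilon/\Delta x - L_{1}\geq L_{1}$ that you invoke produces no bound on $|A_{i}|$ at all. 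What actually rescues the junction estimate is not this viscosity margin but the \emph{coercivity of $H_{i}$} in \eqref{As: coercive} together with the cap $H_{i}\wedge(M+\sup|f_{i}|+1)$ in \eqref{As: num_Ham_consistent}: if $W(0)=A_{i}$ is the overall maximum one uses the one-sided monotonicity of $F_{i}$ to reduce to $F_{i}(-A_{i}/\Delta x,-A_{i}/\Delta x)=G_{i}(-A_{i}/\Delta x,-A_{i}/\Delta x)$, after which coercivity forces $|A_{i}|/\Delta x\leq C$. Your plan mentions ``a coercivity argument at the junction,'' but the concrete mechanism you then write down is the wrong one.

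The paper avoids both difficulties by a single step: choose $C$ by coercivity so that $H_{i}(p)\geq M+\sup|f_{i}|+1$ for $|p|\geq C$, compare $U$ against the cone $\chi(k)=U(m)+C\Delta x|k-m|$, and read off $|D^{\pm}U(m)|\leq C$ for every $m\in J_{i}\setminus\{0\}$ from monotonicity of the scheme. Since $m=1_{i}$ is included, the junction increment $A_{i}$ is controlled automatically, with no need to split into interior versus junction cases or to propagate a conditional estimate. If you want to keep your $W$-based approach, you would need to (i) introduce a penalization to guarantee attainment on the infinite edges and (ii) replace the $\epsilon/\Delta x$ argument at $m=1_{i}$ by the coercivity/cap mechanism above.
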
  

A precise definition of ``monotone," inspired by \cite{numerical}, is stated in the next subsection.

%
%
%
%

In order to establish Theorem \ref{T: rate_stationary_diff}, we fix a constant $L_{2} > 0$ such that
\begin{equation} \label{As: CFL_stationary}
2L_{G} \Delta x \leq \epsilon \leq L_{2} \Delta x.
\end{equation}
The lower bound $2 L_{G} \Delta x$ guarantees that the scheme is monotone and the upper bound ensures that the discretization errors have the right order.

To guarantee that the scheme converges, the proof requires that $L_{c}$ is sufficiently large.  We will assume the following lower bound on $L_{c}$ is satisfied:
	\begin{equation} \label{As: cut_off_bound}
		L_{c} \geq \overline{L}_{c} + 1.
	\end{equation}
This ensures that when $\varphi$ is one of the test functions used in the error analysis and $\epsilon$ is sufficiently small, we have $G_{i}(\varphi_{x_{i}}(x),\varphi_{x_{i}}(x)) = H_{i}(\varphi_{x_{i}}(x))$.  Note that a similar restriction was used in \cite{old_paper}.

\begin{remark} \label{R: examples_Ham} Let us give some examples of numerical Hamiltonians satisfying \eqref{As: num_Ham_lip} and \eqref{As: num_Ham_consistent}.  First, taking advantage of \eqref{As: coercive}, we can use
	\begin{equation*}
		G_{i}(p_{1},p_{2}) = H_{i}\left(\frac{p_{1} + p_{2}}{2}\right) \wedge R_{c},
	\end{equation*}
where $R_{c}$ is chosen so large that \eqref{As: num_Ham_consistent} holds.  Alternatively, we can avoid using \eqref{As: coercive} by defining $G_{i}$ by 
	\begin{equation*}
		G_{i}(p_{1},p_{2}) = H_{i} \left(\frac{((p_{1} \vee -L_{c})\wedge L_{c}) + ((p_{2} \vee -L_{c})\wedge L_{c})}{2} \right). 
	\end{equation*}
	
\end{remark}  

\subsection{Well-posedness of the scheme} \label{S: stationary_fd_explanation}  In this subsection, we prove Theorem \ref{T: stationary_scheme}.

In preparation for the proof, we begin with some terminology.  A function $V : \mathcal{J} \to \mathbb{R}$ is a sub-solution of the the scheme \eqref{E: difference_equation} if
\begin{equation} \label{E: difference_sub}
\left\{ \begin{array}{r l}
	V(m) + F_{i}(D^{+}V(m),D^{-}V(m)) \leq f_{i}(-m \Delta x) & \quad \text{if} \, \, m \in J_{i} \setminus \{0\} \\
	V(0) \leq \frac{1}{K} \sum_{i = 1}^{K} V(1_{i})
\end{array} \right.
\end{equation}
Similarly, a function $W : \mathcal{J} \to \mathbb{R}$ is a super-solution of \eqref{E: difference_equation} if
\begin{equation} \label{E: difference_super}
\left\{ \begin{array}{r l}
	W(m) + F_{i}(D^{+}W(m),D^{-}W(m)) \geq f_{i}(-m \Delta x) & \quad \text{if} \, \, m \in J_{i} \setminus \{0\} \\
	W(0) \geq \frac{1}{K} \sum_{i = 1}^{K} W(1_{i})
\end{array} \right.
\end{equation}
As always, $U$ is a solution of \eqref{E: difference_equation} if and only if it is both a sub- and super-solution.  

Along with the notion of sub- and super-solutions, there is a corresponding comparison principle.  This is encapsulated in the following definition of monotonicity:  

\begin{definition} \label{D: monotone} The scheme \eqref{E: difference_equation} is \emph{monotone} if the following two criteria hold:

(i) If $V,\chi : \mathcal{J} \to \mathbb{R}$, $V$ is a sub-solution, and $V - \chi$ has a global maximum at $m \in J_{i}$, then
\begin{equation*}
\left\{ \begin{array}{r l} 
V(m) + F_{i}(D^{+}\chi(m),D^{-}\chi(m)) \leq f_{i}(-m \Delta x) & \quad \text{if} \, \, m \neq 0, \\
\chi(0) \leq \frac{1}{K} \sum_{i = 1}^{K} \chi(1_{i}) & \text{otherwise}. \end{array} \right.
\end{equation*}

(ii) If $W,\chi : \mathcal{J} \to \mathbb{R}$, $W$ is a super-solution, and $W - \chi$ has a global minimum at $m \in J_{i}$, then 
\begin{equation*}
\left\{ \begin{array}{r l} 
W(m) + F_{i}(D^{+}\chi(m),D^{-}\chi(m)) \geq f_{i}(-m \Delta x) & \quad \text{if} \, \, m \neq 0, \\
\chi(0) \geq \frac{1}{K} \sum_{i = 1}^{K} \chi(1_{i}) & \text{otherwise}.
\end{array} \right.
\end{equation*}
\end{definition}

When we use the term ``monotone" or ``monotonicity" in this paper, we will always mean it in the sense of Definition \ref{D: monotone} above or, in the time-dependent case, Definition \ref{D: monotone_time} below.  Here we have in mind the abstract framework exposed in \cite{numerical}.

As we will see below, the structure of the operators $F_{1},\dots,F_{K}$, and assumptions \eqref{As: num_Ham_lip} and \eqref{As: CFL_stationary} ensure that \eqref{E: difference_equation} is monotone.  In this sense, \eqref{E: difference_equation} plays the role of a CFL condition.

We are now prepared to prove Theorem \ref{T: stationary_scheme}:

\begin{proof}[Proof of Theorem \ref{T: stationary_scheme}] \textbf{Existence:}  We begin by establishing the existence of a bounded solution.  We will use a discrete version of Perron's Method.  Toward that end, we begin by observing that if $M > 0$ is given by \eqref{As: sub-sup}, then the constant functions $U_{\text{sup}}(m) = M$ and $U_{\text{sub}}(m) = -M$ are super- and sub-solutions respectively. 

We now choose the artificial viscosity $\epsilon$ in such a way that the scheme is monotone in the sense of Definition \ref{D: monotone}.  Recall the definition of the constant $L_{G}$ in \eqref{As: LG_constant}.
Observe that since $\epsilon \geq 2L_{G} \Delta x$, the maps $F_{1},\dots,F_{K} :\mathbb{R}^{2} \to \mathbb{R}$ satisfy, for each $i \in \{1,2,\dots,K\}$,
\begin{equation} \label{E: monotonicity_sufficient}
	\left\{ 
		\begin{array}{r l}
			F_{i}(p_{1},p) \leq F_{i}(q_{1},p) & \text{if} \, \, p_{1} > q_{1} \\
			F_{i}(p,p_{1}) \geq F_{i}(p,q_{1}) & \text{if} \, \, p_{1} > q_{1}
		\end{array}
	\right.
\end{equation}
It is easy to see that \eqref{E: monotonicity_sufficient} implies that \eqref{E: difference_equation} is monotone. 

Next, we use a discrete version of Perron's Method (cf.\ \cite{Ishii,User}) to obtain a solution.  Let $\mathcal{S}$ denote the set of all sub-solutions of \eqref{E: difference_equation} that are bounded above by $M$ and below by $-M$.  Define a function $U : \mathcal{J} \to \mathbb{R}$ by 
\begin{equation*}
U(m) = \sup \left\{V(m) \, \mid \, V \in \mathcal{S}\right\}.
\end{equation*}
Notice that $\sup \{|U(m)| \, \mid \, m \in \mathcal{J}\} \leq M$.  Additionally, the monotonicity of the scheme readily implies that $U$ is a sub-solution.   

It remains to verify that $U$ is a super-solution.  We proceed by contradiction.  If $U$ fails to be a super-solution, then there is a point $m \in \mathcal{J}$ at which the corresponding finite-difference inequality does not hold.  We assume $m \neq 0$, leaving the case $m = 0$ to the reader.  In particular, there is a $j \in \{1,\dots,K\}$ such that $m \in J_{j}$ and a $\delta >0$ such that:
\begin{equation} \label{E: strict_sub}
U(m) + F_{j}(D^{+}U(m),D^{-}U(m)) < f_{j}(-m\Delta x) - \delta.
\end{equation}
Note that the monotonicity of \eqref{E: difference_equation} implies that $U(m) < M$.  Define $V : \mathcal{J} \to \mathbb{R}$ by setting $V(k) = U(k)$ if $k \neq m$ and $V(m) = U(m) + \delta'$, where $\delta' > 0$ is so small that $V(m) < M$ and \eqref{E: strict_sub} remains true if $V$ replaces $U$.  

Using \eqref{E: monotonicity_sufficient}, it is not hard to show that $V$ satisfies the sub-solution inequalities at the other nodes.  Therefore, $V \in \mathcal{S}$.  In particular, we deduce $U(m) < V(m) \leq U(m)$, which is a contradiciton. 

Finally, we claim that $\text{Lip}(U) \leq \overline{L}_{c}$ for some $\overline{L}_{c} > 0$ provided $L_{c}$ is sufficiently large.  By \eqref{As: coercive}, we can fix $\overline{L}_{c} > 0$ such that
\begin{equation*}
H_{i}(p) \geq M + \sup \{|f_{i}(x)| \, \mid \, x \in \mathcal{I}\} + 1 \quad \text{if} \, \, |p| \geq K^{-1}\overline{L}_{c}, \, \, i \in \{1,2,\dots,K\}.
\end{equation*}
Henceforth, assume that $L_{c} \geq \overline{L}_{c}$.  Given $i$ and $m \in J_{i}$, define a test function $\chi$ by
\begin{equation*}
\chi(k) = \left\{ \begin{array}{r l}
			U(m) + \overline{L}_{c} \Delta x |k - m|, & \quad k \in J_{i} \\
			U(m) + \overline{L}_{c} m \Delta x + K^{-1}\overline{L}_{c} k \Delta x, & \quad \text{otherwise}
		\end{array} \right.
\end{equation*}
Observe that by the choice of $\overline{L}_{c}$, the bounds on $U$, the consistency assumption \eqref{As: num_Ham_consistent}, and monotonicity of the scheme, $U - \chi$ attains its maximum in $\mathcal{J}$ at $m$.  In particular, $U(k) \leq \chi(k)$ if $k \in J_{i}$.  Since $U(m) = \chi(m)$, the previous inequality readily implies $D^{-}U(m) \geq -\overline{L}_{c}$ in general.  If, in addition, $m \neq 0$, we also find $D^{+}U(m) \leq \overline{L}_{c}$.

Since $m$ was arbitrary, by observing that $D^{+}U(m) = D^{-}U(m - 1)$ if $m \neq 0$, we conclude $|D^{+}U(m)| \vee |D^{-}U(m)| \leq \overline{L}_{c}$ for all $m \in J_{i} \setminus \{0\}$ and all $i \in \{1,2,\dots,K\}$.  We leave it to the reader to verify that this implies $\text{Lip}(U) \leq \overline{L}_{c} \Delta x$.  

\textbf{Uniqueness:} Finally, we show that $U$ is the unique bounded solution.  Suppose $\tilde{U}$ is some other bounded solution of \eqref{E: difference_equation}.  We claim that $U \leq \tilde{U}$.  For a given $\alpha > 0$, let $m_{\alpha}$ denote the maximum of the function $m \mapsto U(m) - \tilde{U}(m) + \alpha m \Delta x- \alpha |m\Delta x|^{2}$.  A straightforward computation shows that 
	\begin{equation*}
		\lim_{\alpha \to 0^{+}} \left(U(m_{\alpha}) - \tilde{U}(m_{\alpha}) \right) = \sup \left\{U(m) - \tilde{U}(m) \, \mid \, m \in \mathcal{J} \right\}.
	\end{equation*}  
Thus, we only need to show that the left-hand side is non-positive.  

Consider the case $m_{\alpha} \in J_{j} \setminus \{0\}$ for some $j$.  Since $U$ is a solution of \eqref{E: difference_equation} and $m \mapsto U(m) - \tilde{U}(m) + \alpha m \Delta x - \alpha |m\Delta x|^{2}$ has a global maximum at $m_{\alpha}$, the monotonicity of \eqref{E: difference_equation} and \eqref{As: num_Ham_lip} implies
	\begin{equation*}
		U(m_{\alpha}) + F_{j} \left(D^{+}\tilde{U}(m_{\alpha}), D^{-}\tilde{U}(m_{\alpha}\right) \leq C (\alpha|m_{\alpha}\Delta x| + \alpha \Delta x).
	\end{equation*}
Subtracting from this the equation $\tilde{U}(m_{\alpha}) + F_{j} \left(D^{+}\tilde{U}(m_{\alpha}),D^{-}\tilde{U}(m_{\alpha})\right) = 0$, we obtain
	\begin{equation*}
		U(m_{\alpha}) - \tilde{U}(m_{\alpha}) \leq C(\alpha |m_{\alpha}\Delta x| + \alpha \Delta x).
	\end{equation*}
At the same time, since $m_{\alpha}$ is a maximum, we know that
	\begin{align*}
		\alpha |m_{\alpha}\Delta x|^{2} &\leq \tilde{U}(m_{\alpha}) - U(m_{\alpha}) - (U(0) - \tilde{U}(0)) + \alpha m_{\alpha} \Delta x \\
			&\leq 2 \left(M + \sup \left\{\tilde{U}(m) \, \mid \, m \in \mathcal{J} \right\}\right) + m_{\alpha}|\Delta x|,
	\end{align*}
and, thus, $\lim_{\alpha \to 0^{+}} \alpha|m_{\alpha}\Delta x| = 0$.  Therefore, $\lim_{\alpha \to 0^{+}} (U(m_{\alpha}) - \tilde{U}(m_{\alpha})) \leq 0$. 

If $m_{\alpha} = 0$, we argue similarly.  Thus, we have proved that $U \leq \tilde{U}$.  Interchanging the roles of $U$ and $\tilde{U}$, we see that $\tilde{U} \leq U$ also holds.  In particular, $U = \tilde{U}$.    
\end{proof}

\subsection{The proof of convergence} \label{S: error_analysis_num_stationary}  The proof of Theorem \ref{T: rate_stationary_diff} is almost identical to that of Theorem \ref{T: rate_stationary_visc}, the only difference being that we cannot send $\delta \to 0^{+}$ in the definition of $C_{i}(\delta)$.  

Instead, for each $i \in \{1,\dots,K\}$, we define $\tilde{C}_{i}= D^{+}U(1_{i}) + \frac{\Delta x}{\sqrt{\epsilon}}$ and, by analogy with the proof in the vanishing viscosity case, we study test functions $\Phi_{i,\alpha} : J_{i} \times \overline{I_{i}} \to \mathbb{R}$ defined by
\begin{equation*}
\Phi_{i,\alpha}(m,y) = U(m) - u(y) - \frac{(-m \Delta x - y)^{2}}{2 \sqrt{\epsilon}} - \tilde{C}_{i}(-m \Delta x - y) - \alpha (m \Delta x)^{2},
\end{equation*}
Since $U$ and $u$ are both bounded by $M$, the penalization ensures that we can fix a point $(m_{i}(\alpha),y_{i}(\alpha))$ where $\Phi_{i,\alpha}$ attains its maximum in $J_{i} \times \overline{I_{i}}$.  
As a consequence of the definition of $\tilde{C}_{i}$, $m_{i}(\alpha) = 0$ only if 
\begin{equation*}
U(0) - U(1_{i}) \geq \tilde{C}_{i} \Delta x - \frac{y_{i}(\alpha) \Delta x}{\sqrt{\epsilon}} - \frac{(\Delta x)^{2}}{2 \sqrt{\epsilon}} - \alpha (\Delta x)^{2}.
\end{equation*} 
Since $- \frac{y_{i}(\delta,\alpha) \Delta x}{\sqrt{\epsilon}} \geq 0$, we can divide by $\Delta x$ and appeal to the definition of $\tilde{C}_{i}$ to find
	\begin{equation*}
		D^{+}U(1_{i}) = \frac{U(0) - U(1_{i})}{\Delta x} \geq D^{+}U(1_{i}) + \frac{\Delta x}{2 \sqrt{\epsilon}} - \alpha \Delta x,
	\end{equation*}
which is a contradiction if $\alpha$ is small enough.  Thus, a numerical version of Lemma \ref{L: magic} holds, but the $\frac{\Delta x}{\sqrt{\epsilon}}$ term adds a discretization error to the subsequent estimates.  It follows from \eqref{As: CFL_stationary} that this additional discretization error vanishes in the limit like $\sqrt{\Delta x}$.  In particular, the discretization does not change the order of the approximation.  

\begin{proof}[Sketch of the Proof of Theorem \ref{T: rate_stationary_diff}]  For a given $\Delta x$ and $\epsilon$ satisfying the assumptions of the theorem, we will show how to obtain a bound on $U - u$ following the approach of the proof of Theorem \ref{T: rate_stationary_visc} in Subsection \ref{S: stationary_proof}.  Let $\alpha > 0$ be a free parameter and define $\{\Phi_{1,\alpha},\dots, \Phi_{K,\alpha}\}$ as in the discussion above.  In view of the penalization terms, for each $i$, we can fix a point $(m_{i}(\alpha),y_{i}(\alpha))$ maximizing $\Phi_{i,\alpha}$ in its domain.  As in the vanishing viscosity argument, there are two cases: (i) there is a $j$ and a sequence $\alpha_{n} \to 0$ such that $y_{j}(\alpha_{n}) < 0$ for all $n$, or (ii) $y_{i}(\alpha) = 0$ for all $i \in \{1,\dots,K\}$ and all $\alpha > 0$ sufficiently small.  

Assume we are in case (ii).  By passing to the limit $\alpha \to 0^{+}$ and arguing by compactness, we recover, for each $i$, a maximizer $(m_{i},0)$ of the function 
	\begin{equation*}
		\Phi_{i}(m,y) = U(m) - u(y) - \frac{(-m\Delta x - y)^{2}}{2 \sqrt{\epsilon}}- \tilde{C}_{i}(-m\Delta x - y).
	\end{equation*}
Notice that the arguments of the previous paragraph show that $m_{i} \neq 0$.

First, since $m \mapsto \Phi_{i}(m,0)$ is maximized at $m_{i}$, we use the inequality $\Phi_{i}(m_{i} + 1,0) \vee \Phi_{i}(m_{i} - 1, 0) \leq \Phi_{i}(m_{i},0)$ to obtain 
	\begin{align}
		U(m_{i}) - U(m_{i} + 1) &\geq \left(\tilde{C}_{i} - \frac{m_{i} \Delta x}{\sqrt{\epsilon}} - \frac{(\Delta x)}{2 \sqrt{\epsilon}} \right) \Delta x \label{E: error_analysis_annoying_1} \\
		U(m_{i} - 1) - U(m_{i}) &\leq \left( \tilde{C}_{i} - \frac{m_{i}\Delta x}{\sqrt{\epsilon}} + \frac{\Delta x}{2 \sqrt{\epsilon}} \right) \Delta x \label{E: error_analysis_annoying_2}
	\end{align}
Thus, from the inequality $\text{Lip}(U) \leq \overline{L}_{c} \Delta x$, we conclude
	\begin{equation*}
		\left|\tilde{C}_{i}- \frac{m_{i} \Delta x}{\sqrt{\epsilon}} \right| \leq \overline{L}_{c} + \frac{\Delta x}{2 \sqrt{\epsilon}}.
	\end{equation*}
In particular, by \eqref{As: CFL_stationary}, this quantity is less than $\overline{L}_{c} + 1$ if $\epsilon \leq 16 L_{G}^{2}$.  

Next, notice that monotonicity of the scheme yields
	\begin{equation*}
		U(m_{i}) + F_{i} \left( D^{+} \Phi_{i}(m_{i},0), D^{-}\Phi_{i}(m_{i},0)\right) \leq f_{i}(-m_{i} \Delta x),
	\end{equation*}
where $D^{+} \Phi_{i}(m,0)$ and $D^{-}\Phi_{i}(m,0)$ are to be understood as $D^{+}$ and $D^{-}$ acting on the grid function $m \mapsto \Phi_{i}(m,0)$.  
Observe that $D^{+}\Phi_{i}(m_{i},0)$ and $D^{-}\Phi_{i}(m_{i},0)$ are exactly the terms in parentheses on the right-hand sides of \eqref{E: error_analysis_annoying_1} and \eqref{E: error_analysis_annoying_2}, and $D^{+}\Phi_{i}(m_{i},0) - D^{-}\Phi_{i}(m_{i},0) = \frac{\Delta x}{\sqrt{\epsilon}}$.  
Therefore, if $\epsilon \leq 16 L_{G}^{2}$, we can appeal to \eqref{As: num_Ham_lip}, \eqref{As: num_Ham_consistent}, and \eqref{As: cut_off_bound} to find
	\begin{align*}
		U(m_{i}) - \sqrt{\epsilon} + H_{i} \left( \tilde{C}_{i} - \frac{m_{i}\Delta x}{\sqrt{\epsilon}} \right) - f_{i}(-m_{i}\Delta x)  &\leq \frac{C \Delta x}{\sqrt{\epsilon}},
	\end{align*}
where the constant $C$ only depends on $L_{G}$ and $L_{2}$.  

Finally, using the super-solution property of $u$ and Lemma \ref{L: neumann_type_lemma}, we find a $j \in \{1,\dots,K\}$ and a $\tilde{\theta} \in [0,1]$ such that
	\begin{equation*}
		u(0) + H_{j} \left( \tilde{C}_{j} - \frac{m_{j} \Delta x}{\sqrt{\epsilon}} - \tilde{\theta} \left(\sum_{i = 1}^{K} \tilde{C}_{i} - \frac{m_{i} \Delta x}{\sqrt{\epsilon}} \right)^{+} \right) \geq f_{j}(0).
	\end{equation*}
Notice that since $U$ is a solution of \eqref{E: difference_equation}, the definition of $\{\tilde{C}_{1},\dots,\tilde{C}_{K}\}$ yields
	\begin{equation*}
		\sum_{i = 1}^{K} \left(\tilde{C}_{i} - \frac{m_{i} \Delta x}{\sqrt{\epsilon}} \right)< \sum_{i = 1}^{K} D^{+}U(1_{i}) + \frac{K \Delta x}{\sqrt{\epsilon}} = \frac{K \Delta x}{\sqrt{\epsilon}}.
	\end{equation*} 
Therefore, by \eqref{As: continuity},
	\begin{equation*}
		u(0) + H_{j} \left( \tilde{C}_{j} - \frac{m_{j} \Delta x}{\sqrt{\epsilon}} \right) \geq f_{j}(0) - \frac{C \Delta x}{\sqrt{\epsilon}}.
	\end{equation*}
Putting this together with the equation we derived for $U$ and using \eqref{As: continuity} once more, we find
	\begin{equation*}
		U(m_{i}) - u(0) \leq \sqrt{\epsilon} + \frac{C \Delta x}{\sqrt{\epsilon}} + C|m_{i}\Delta x|,
	\end{equation*}
where $C$ depends on $L_{G}$, $L_{2}$, and the Lipschitz constant of $H_{j}$ in $\overline{I_{j}} \times [-(\overline{L}_{c} + 1), \overline{L}_{c} + 1]$.  
Using the inequality $\Phi_{i}(m_{i},-m_{i}\Delta x) \leq \Phi_{i}(m_{i},y_{i})$ and the Lipschitz continuity of $u$, we find $|m_{i} \Delta x| = |-m_{i} \Delta x - 0| \leq C \sqrt{\epsilon}$, where the constant depends only $\text{Lip}(u)$.  This and the CFL condition \eqref{As: CFL_stationary} together yield
	\begin{equation*}
		U(m_{i}) - u(0) \leq C \sqrt{\Delta x},
	\end{equation*}
where the constant depends only on $\text{Lip}(u)$, $L_{G}$, $L_{2}$, and the Lipschitz constant of the Hamiltonians restricted to gradients in $[-(\overline{L}_{c} + 1),\overline{L}_{c} + 1]$.  All of this is provided, again, that $\epsilon \leq 16 L_{G}^{2}$.  
	
The numerical generalization of the arguments in case (i) of Theorem \ref{T: rate_stationary_visc} follows similarly.  The key point is that we are ultimately sending $m \to \infty$ so the discretization errors of the form $\alpha_{n} \Delta x$ vanish in the limit.  Moreover, as in the vanishing viscosity case, the additional gradient terms of the form $-\alpha_{n} m_{i}(\alpha) \Delta x$ also vanish as $n \to \infty$.  Therefore, the penalization terms in case (i) do not contribute to the error.  
  
The remainder of the proof of Theorem \ref{T: rate_stationary_visc} generalizes readily so we omit the details.  Notice, however, that we only treated the case when $\epsilon$ is sufficiently small.  If $\epsilon > 16 L_{G}^{2}$, then we use the a priori estimate $U - u \leq 2M$ to deduce that $U - u \leq C \sqrt{\Delta x}$ in this case, where the constant $C$ now depends also on $M$ and the constant $L_{2}$ from \eqref{As: CFL_stationary}. \end{proof}

\begin{remark} \label{R: half_relaxed_stationary}  It is worth noting that a qualitative proof of existence of the solution of \eqref{E: stationary} and the convergence of the scheme can be obtained using the method of half-relaxed limits (cf.\ \cite[Section 6]{barles book}).  Let us denote by $U^{\Delta x}$ the solution of \eqref{E: difference_equation} at scale $\Delta x$, suppressing the dependence on $\epsilon$ while assuming that the assumptions of Theorem \ref{T: rate_stationary_diff} hold.  Define $u_{*}, u^{*} : \mathcal{I} \to \mathbb{R}$ by 
	\begin{align*}
		u^{*}(x) = \limsup_{\delta \to 0^{+}} \sup \left\{U^{\Delta x}(m) \, \mid \, d(-m\Delta x, x) + \Delta x \leq \delta \right\}, \\
		u_{*}(x) = \liminf_{\delta \to 0^{+}} \inf \left\{ U^{\Delta x}(m) \, \mid \, d(-m\Delta x, x) + \Delta x \leq \delta \right\}.
	\end{align*}
Notice that the estimate $\text{Lip}(U^{\Delta x}) \leq \overline{L}_{c} \Delta x$ implies $u_{*}$ and $u^{*}$ are uniformly Lipschitz continuous.  Similarly, they are bounded, and the definition implies $u_{*} \leq u^{*}$.  Standard arguments and the assumptions of Theorem \ref{T: rate_stationary_diff} imply that $u^{*}$ is a sub-solution of \eqref{E: stationary} and $u_{*}$ is a super-solution (cf.\ \cite[Proof of Theorem 2.1]{numerical}).  Therefore, by comparison, $u^{*} \leq u_{*}$.  In particular, $u_{*} = u^{*}$, and if we define $u$ by $u = u^{*} = u_{*}$, then $u$ is the solution.  Moreover, after studying the definition of $u^{*}$ and $u_{*}$ and using the equality $u_{*} = u^{*}$, it becomes clear that for each $R > 0$, convergence holds in the following sense:
	\begin{equation} \label{E: local_convergence}
		\lim_{\Delta x \to 0^{+}} \sup \left\{ |U^{\Delta x}(m) - u(-m\Delta x)| \, \mid \, d(-m\Delta x,0) \leq R \right\} = 0.
	\end{equation}
Of course, Theorem \ref{T: rate_stationary_diff} significantly improves \eqref{E: local_convergence}.  \end{remark}


\section{The Cauchy Problem: Vanishing Viscosity Limit} \label{S: cauchy_problem}

We now turn to the vanishing viscosity limit for the time-dependent equation.  The problem of error estimates for this approximation motivated our return to the comparison proof in \cite{time-dependent}, and in what follows the ideas presented in Section \ref{S: time_comparison} are instrumental.

\subsection{Preliminaries}

In this section, $u$ denotes the solution of \eqref{E: time} and $u^{\epsilon}$, the solution of \eqref{E: viscous_time}.  For the purposes of the proof, we will consider these up to time $T + 1$.  

In the arguments to follow, we assume, in addition to \eqref{As: data_lip}, the following condition:
	\begin{equation} \label{As: second_deriv}
		[u_{0}]_{2} < \infty.
	\end{equation}
Note that this, together with \eqref{As: data_lip}, implies $[u_{0}]_{1} + [u_{0}]_{2} < \infty$, and, thus, Theorem \ref{T: existence_time_viscous} implies $u^{\epsilon} \in \text{Lip}(\mathcal{I} \times [0,T + 1])$.  Later, in Remark \ref{R: second_deriv}, we discuss how to remove this assumption.  

Since we are assuming \eqref{As: data_lip} and \eqref{As: second_deriv}, Theorems \ref{T: existence_time} and \ref{T: existence_time_viscous} in Appendix \ref{A: cauchy_existence} imply that for each $K > 0$, there is an $L_{K,T} > 0$ such that 
	\begin{equation} \label{E: time_viscous_lipschitz}
		\sup \left\{ \text{Lip}(u) \vee \text{Lip}(u^{\epsilon}) \, \mid \, \epsilon \in (0,K]\right\} \leq L_{K,T},
	\end{equation}
where in the Lipschitz constants we are again considering $u$ and $(u^{\epsilon})_{\epsilon > 0}$ as solutions in the domain $\mathcal{I} \times [0,T + 1]$.  
We will use \eqref{E: time_viscous_lipschitz} throughout the error analysis.  

Note that, as in the time-independent case, once we prove $u^{\epsilon} \to u$, it will follow that $L_{K,T} = \sup \left\{\text{Lip}(u^{\epsilon}) \, \mid \, \epsilon \in (0,K]\right\}$ can be used in \eqref{E: time_viscous_lipschitz}.  Notice that this number depends only on $\text{Lip}(u_{0})$, $T$, and $K$ by Theorem \ref{T: existence_time_viscous}, as claimed in the statement of Theorem \ref{T: rate_cauchy_visc}.  

Henceforth, we fix $K > 0$ and assume as in the statement of Theorem \ref{T: rate_cauchy_visc} that $\epsilon \in (0,K]$.

\subsection{Sup- and Inf-convolutions}  We define the sup-convolution $u^{\epsilon,\theta}$ of $u^{\epsilon}$ and the inf-convolution $u_{\theta}$ of $u$ as in Subsection \ref{S: proof_of_comparison_time}.  Since $u, u^{\epsilon} \in \text{Lip}(\mathcal{I} \times [0,T + 1])$, the following generalization of Lemma \ref{P: sup_inf_convolution} holds with slightly improved estimates:   

\begin{prop} \label{P: special_sup_inf_equations}  Let $T_{\theta} = 2L_{K,T} \theta$.  If $T_{\theta} < T + 1$, then $u_{\theta}$ is a super-solution of \eqref{E: modified_equation_2} with $\omega(\xi) = L_{K,T} \xi$ and the right-endpoint $T$ replaced by $T + 1$, and $u^{\epsilon,\theta}$ is a sub-solution of the following equation:
	\begin{equation} \label{E: modified_equation_viscous}
		\left\{
			\begin{array}{r l}
				u^{\epsilon,\theta}_{t} - \epsilon u^{\epsilon,\theta}_{x_{i}x_{i}} + H_{i}(t,x,w^{\epsilon,\theta}_{x_{i}}) - DT_{\theta} = 0 & \text{in} \, \, I_{i} \times (T_{\theta},T + 1) \\
				\sum_{i = 1}^{K} u^{\epsilon,\theta}_{x_{i}} = 0 & \text{on} \, \, \{0\} \times (T_{\theta},T+1) \\
				u^{\epsilon,\theta} = u_{0} + 2L_{K,T} T_{\theta} & \text{on} \, \, \mathcal{I} \times \{T_{\theta}\} 
			\end{array}
		\right.
	\end{equation}
\end{prop} 

\begin{proof}  Everything follows as in the proof of Lemma \ref{L: sup_inf_convolution}.  We can improve the right-hand side $\omega(T)$ in \eqref{E: important_later} to $L_{K,T}|t - s|$, and thereby improve the estimate $T_{\theta} = C \sqrt{\theta}$ to $T_{\theta} = 2L_{K,T} \theta$.  \end{proof}  

Finally, before proceeding, we observe that $u^{\epsilon,\theta}$ and $u_{\theta}$ are uniformly Lipschitz continuous in space-time with the same Lipschitz constants as the functions from which they are derived, that is, 
	\begin{equation} \label{E: need_that_sup_inf_lipschitz}
		\text{Lip}(u^{\epsilon,\theta}) \vee \text{Lip}(u_{\theta}) \leq L_{K,T}.
	\end{equation}
We leave the verification of this fact as an exercise for the reader.

\subsection{Modulus of continuity of sub- and super-differentials}  In Section \ref{S: time_comparison}, we saw that the comparison principle for \eqref{E: time} is proved using the continuity properties of sub-differentials (resp.\ super-differentials) of semi-convex (resp.\ semi-concave) functions.  In order to obtain a rate of convergence in the vanishing viscosity limit, we will estimate the modulus of continuity of these multi-valued maps at a maximum point of $u^{\epsilon,\theta} - u_{\theta} - bt$.  Actually, for technical reasons, we will add a penalization to this function in order to prevent the maximum from occurring at $T + 1$.    

The next result provides this estimate:

\begin{prop} \label{P: time_rate}  Fix $b \in \mathbb{R}$, assume that $T_{\theta} < t_{0} \leq T + 1$ and $\theta \leq K$, and suppose $t_{0} > 0$ maximizes the function $t \mapsto u^{\epsilon,\theta}(x_{0},t) - u_{\theta}(x_{0},t) - bt - \sqrt{\theta}(T + 1 - t)^{-1}$ in $[0,T + 1]$.  Let $c_{1} = u^{\epsilon,\theta}_{t}(x_{0},t_{0})$ and $c_{2} = u_{\theta,t}(x_{0},t_{0})$.  Then there is a constant $A >0$ depending only on $L_{K,T}$ and $T$ such that:

(i) If $a \in \partial^{-} u^{\epsilon,\theta}(x,t_{0})$ and $d(x,x_{0}) \leq A \theta$, then 
\begin{equation*}
a \geq c_{1} - 2 \sqrt{A} \left(\frac{d(x,x_{0})}{\theta}\right)^{\frac{1}{2}}.
\end{equation*} 

(ii) If $a \in \partial^{+} u_{\theta}(x,t_{0})$ and $d(x,x_{0}) \leq A\theta$, then 
\begin{equation*}
a \leq c_{2} + 2 \sqrt{A} \left(\frac{d(x,x_{0})}{\theta}\right)^{\frac{1}{2}}.
\end{equation*}   
\end{prop}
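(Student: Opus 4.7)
The plan for (i) is to build an \emph{approximate} super-gradient inequality for $u_{\theta}(x,\cdot)$ at $t_{0}$ based at $c_{2}$ from the inf-convolution representation of $u_{\theta}$, combine it with the exact sub-gradient inequality for $u^{\epsilon,\theta}(x,\cdot)$ given by $a$, and then exploit the maximum of $u^{\epsilon,\theta} - u_{\theta} - bt$ at $(x_{0},t_{0})$ together with the first-order identity $c_{1} - c_{2} = b$ (which holds at the interior point $t_{0} \in (T_{\theta},T)$ by Proposition \ref{P: derivatives_exist}). Part (ii) will follow by the symmetric argument with the roles of $u^{\epsilon,\theta}$ and $u_{\theta}$ swapped.

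To produce the approximate super-gradient, I fix $r_{0} \in [0,T]$ realizing $u_{\theta}(x_{0},t_{0}) = u(x_{0},r_{0}) + (t_{0} - r_{0})^{2}/(2\theta)$, so that $c_{2} = (t_{0} - r_{0})/\theta$ by the envelope theorem. Using $r_{0}$ as a trial in the inf-convolution at $(x,s)$ and expanding the quadratic around $t_{0}$ yields the pointwise bound $u_{\theta}(x,s) \leq u_{\theta}(x,t_{0}) + c_{2}(s - t_{0}) + (s - t_{0})^{2}/(2\theta) + D(x)$, where $D(x) := u(x,r_{0}) + (t_{0}-r_{0})^{2}/(2\theta) - u_{\theta}(x,t_{0}) \geq 0$ measures how far $r_{0}$ is from minimizing at $x$. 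Subtracting this from the sub-gradient inequality for $a$ and applying $u^{\epsilon,\theta}(x,s) - u_{\theta}(x,s) - bs \leq u^{\epsilon,\theta}(x_{0},t_{0}) - u_{\theta}(x_{0},t_{0}) - bt_{0}$ produces, for every $s \in [0,T]$, the inequality $(a - c_{1})(s - t_{0}) \leq \Delta(x) + D(x) + (s - t_{0})^{2}/\theta$, where $\Delta(x) := [u^{\epsilon,\theta}(x_{0},t_{0}) - u_{\theta}(x_{0},t_{0})] - [u^{\epsilon,\theta}(x,t_{0}) - u_{\theta}(x,t_{0})] \geq 0$. Setting $\tau = t_{0} - s > 0$ and minimizing $\tau \mapsto (\Delta + D)/\tau + \tau/\theta$ over $\tau > 0$ yields $a \geq c_{1} - 2\sqrt{(\Delta(x) + D(x))/\theta}$.

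The heart of the argument is the cancellation $\Delta(x) + D(x) = [u^{\epsilon,\theta}(x_{0},t_{0}) - u^{\epsilon,\theta}(x,t_{0})] + [u(x,r_{0}) - u(x_{0},r_{0})] \leq 2L\, d(x,x_{0})$, obtained by substituting $u_{\theta}(x_{0},t_{0}) = u(x_{0},r_{0}) + (t_{0}-r_{0})^{2}/(2\theta)$, which wipes out both $u_{\theta}(x_{0},t_{0})$ and the quadratic term and leaves only two Lipschitz differences. Plugging this into the previous bound gives exactly $a \geq c_{1} - 2\sqrt{2L}\sqrt{d(x,x_{0})/\theta}$. The hypothesis $d(x,x_{0}) \leq 2L\theta$ ensures the optimal $\tau = \sqrt{(\Delta + D)\theta} \leq 2L\theta < t_{0}$, so $s = t_{0} - \tau$ remains in $(0,T)$ as needed. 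The anticipated main obstacle is precisely this cancellation: naive Lipschitz bounds on $\Delta$ and $D$ separately only give $4L\, d(x,x_{0})$, which would yield a suboptimal constant in the modulus. Part (ii) is handled by the symmetric scheme: take a sup-convolution maximizer $s_{0}$ realizing $u^{\epsilon,\theta}(x_{0},t_{0})$ as a trial in the sup-convolution for $u^{\epsilon,\theta}(x,\cdot)$ to obtain an approximate sub-gradient inequality at $t_{0}$ based at $c_{1}$ with defect $E(x) \geq 0$, combine it with the super-gradient $a \in \partial^{+} u_{\theta}(x,t_{0})$ and the maximum condition, and optimize over $s > t_{0}$ (that is, $\tau = s - t_{0} > 0$) to extract an upper bound on $a$; the analogous telescoping identity $\Delta + E \leq 2L\, d(x,x_{0})$ delivers the stated estimate.
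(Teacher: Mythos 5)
Your argument for part (i) is correct and takes a genuinely different route from the paper's. The paper proves (i) by working at $x_0$ throughout: it derives the two-sided bound $u^{\epsilon,\theta}(x_0,t_0) + c_1(t-t_0) - \tfrac{(t-t_0)^2}{2\theta} \le u^{\epsilon,\theta}(x_0,t) \le u^{\epsilon,\theta}(x_0,t_0) + c_1(t-t_0) + \tfrac{(t-t_0)^2}{2\theta}$ from the maximum condition and the semi-concavity of $u_\theta$ at $(x_0,t_0)$, plugs in the sub-gradient inequality for $a$ at $(x,t_0)$, and then pays a spatial Lipschitz cost \emph{twice for $u^{\epsilon,\theta}$} (once at time $t$ and once at time $t_0$) to move from $x$ to $x_0$. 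You instead exploit the inf-convolution structure directly: fixing the minimizer $r_0$ at $(x_0,t_0)$ and using it as a trial at $(x,s)$ yields the approximate super-gradient for $u_\theta$ centered at $x$ with defect $D(x) \ge 0$, and the telescoping identity $\Delta(x)+D(x) = [u^{\epsilon,\theta}(x_0,t_0)-u^{\epsilon,\theta}(x,t_0)]+[u(x,r_0)-u(x_0,r_0)] \le 2Ld(x,x_0)$ then pays a Lipschitz cost once for $u^{\epsilon,\theta}$ and once for $u$. Both approaches hit the same sharp constant $2\sqrt{2L}$; yours makes the origin of the factor $2L$ transparent (it is literally two independent Lipschitz differences), while the paper's is arguably shorter because it never needs to name $r_0$ or $D(x)$.

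There is, however, a sign error in your sketch of (ii). With the approximate sub-gradient inequality for $u^{\epsilon,\theta}$ based at $c_1$ and defect $E(x)$, the super-gradient $a \in \partial^+ u_\theta(x,t_0)$, the maximum condition, and $c_1 - b = c_2$, the combined inequality reads $(c_2 - a)(s-t_0) \le \Delta(x) + E(x) + \tfrac{(s-t_0)^2}{\theta}$. Taking $s > t_0$ divides by a positive quantity and delivers $c_2 - a \le \tfrac{\Delta+E}{s-t_0} + \tfrac{s-t_0}{\theta}$, which after optimization is the \emph{lower} bound $a \ge c_2 - 2\sqrt{(\Delta+E)/\theta}$ — not the claimed upper bound. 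To obtain the upper bound $a \le c_2 + 2\sqrt{2L}\sqrt{d(x,x_0)/\theta}$ one must take $s < t_0$, i.e.\ $\tau = t_0 - s > 0$, exactly as in part (i); this both reverses the inequality correctly and keeps $s = t_0 - \tau^*$ inside $(0,T)$ under the hypothesis $d(x,x_0) \le 2L\theta$ (whereas $s > t_0$ would fail outright when $t_0 = T$). The telescoping identity $\Delta + E = [u^\epsilon(x_0,s_0)-u^\epsilon(x,s_0)]+[u_\theta(x,t_0)-u_\theta(x_0,t_0)] \le 2Ld(x,x_0)$ and the rest of your sketch are fine once this direction is fixed.
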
    

\begin{proof}  We only prove (i) since the proof of (ii) follows similarly. Observe that, by the semi-concavity of $u_{\theta}$, we can write
\begin{equation*}
u_{\theta}(x_{0},t) \leq u_{\theta}(x_{0},t_{0}) + c_{2}(t - t_{0}) + \frac{(t - t_{0})^{2}}{2 \theta}.
\end{equation*}
Moreover, since $(x_{0},t_{0})$ is a max of $u^{\epsilon,\theta} - u_{\theta} - bt - \sqrt{\theta}(T + 1 - t)^{-1}$, we find
\begin{align*} 
	u^{\epsilon,\theta}(x_{0},t) - u^{\epsilon,\theta}(x_{0},t_{0}) &\leq u_{\theta}(x_{0},t) -u_{\theta}(x_{0},t_{0}) + b(t - t_{0})  \\
		&\quad + \sqrt{\theta}(T + 1 - t)^{-1} - \sqrt{\theta}(T + 1 - t_{0})^{-1}
\end{align*}
Applying Taylor's Theorem with remainder to the last two terms, we find, for each $t \in (0,t_{0})$, a $t' \in [t,t_{0}]$ such that
\begin{align*}
		u^{\epsilon,\theta}(x_{0},t) - u^{\epsilon,\theta}(x_{0},t_{0}) &\leq c_{2}(t- t_{0}) + b(t - t_{0}) + \sqrt{\theta}(T + 1 - t_{0})^{-2}(t - t_{0})  \\
		&\quad + \frac{(t - t_{0})^{2}}{2 \theta} + \frac{\sqrt{\theta}(t - t_{0})^{2}}{(T + 1 - t')^{3}}
\end{align*}
Note that since $t_{0}$ is an interior maximum of $t \mapsto u^{\epsilon,\theta}(x_{0},t) - u_{\theta}(x_{0},t) - bt - \sqrt{\theta} (T + 1 - t)^{-1}$ in $[0,T + 1]$, Proposition \ref{P: derivatives_exist} implies that $b + \sqrt{\theta}(T + 1 - t_{0})^{-2}= c_{1} - c_{2}$.  Putting this together with the inequality $T + 1 - t' \geq T + 1 - t_{0}$, we arrive at
	\begin{equation} \label{E: important_inequality_unfortunately}
		u^{\epsilon,\theta}(x_{0},t) - u^{\epsilon,\theta}(x_{0},t_{0}) \leq c_{1}(t - t_{0})  + \frac{(t - t_{0})^{2}}{2 \theta} + \frac{\sqrt{\theta}(t - t_{0})^{2}}{(T + 1 - t_{0})^{3}}
	\end{equation}
	
From the inequality $u^{\epsilon,\theta}(x_{0},0) - u_{\theta}(x_{0},0) - \sqrt{\theta}(T + 1)^{-1} \leq u^{\epsilon,\theta}(x_{0},t_{0}) - u_{\theta}(x_{0},t_{0}) - bt_{0}-  \sqrt{\theta}(T + 1 - t_{0})^{-1}$ and the $t$-independent inequality $|u^{\epsilon,\theta}(x_{0},t) - u_{\theta}(x_{0},t)| \leq 2(L_{K,T}T + 2L_{K,T}T_{\theta})$, we obtain
	\begin{equation*}
		\sqrt{\theta}(T + 1- t_{0})^{-1} \leq 4(L_{K,T}T + 2L_{K,T}T_{\theta}) + \sqrt{\theta}(T + 1)^{-1}.
	\end{equation*}
Since $\theta \leq K$, there is a $C >0$ depending only on $K$, $L_{K,T}$, and $T$ such that
	\begin{equation*}
		(T + 1 - t_{0})^{-3} \leq C \sqrt{\theta}^{-\frac{3}{2}},
	\end{equation*}
and, thus, plugging this into \eqref{E: important_inequality_unfortunately}, we find
	\begin{equation}\label{E: important}
		u^{\epsilon,\theta}(x_{0},t) - u^{\epsilon,\theta}(x_{0},t_{0}) \leq c_{1}(t - t_{0})  + \frac{C(t - t_{0})^{2}}{\theta}
	\end{equation}

If $x \in \overline{I_{i}}$ and $a \in \partial^{-}u^{\epsilon,\theta}(x,t_{0})$, then 
\begin{equation*}
u^{\epsilon,\theta}(x,t) \geq u^{\epsilon,\theta}(x,t_{0}) + a (t - t_{0}) - \frac{(t - t_{0})^{2}}{2 \theta}.
\end{equation*}
Moreover, $u^{\epsilon,\theta}(x,t) = (u^{\epsilon,\theta}(x,t) - u^{\epsilon,\theta}(x_{0},t)) + u^{\epsilon,\theta}(x_{0},t)$.  Thus,
\begin{equation*}
L_{K,T} d(x,x_{0}) + u^{\epsilon,\theta}(x_{0},t) \geq u^{\epsilon,\theta}(x,t_{0}) + a(t - t_{0}) - \frac{(t - t_{0})^{2}}{2 \theta}.
\end{equation*}
Appealing to \eqref{E: important}, we obtain
\begin{equation*}
L_{K,T}d(x,x_{0}) + u^{\epsilon,\theta}(x_{0},t_{0}) + c_{1}(t - t_{0}) + \frac{C(t - t_{0})^{2}}{\theta} \geq u^{\epsilon,\theta}(x,t_{0}) + a(t - t_{0}).
\end{equation*}
Using the Lipschitz property of $u^{\epsilon,\theta}$ once more, we see that
\begin{equation*}
(a - c_{1})(t - t_{0}) \leq 2 L_{K,T} d(x,x_{0}) + \frac{C(t - t_{0})^{2}}{\theta}.
\end{equation*}
Setting $t - t_{0} = - \zeta$ for some $\zeta > 0$, we find
\begin{equation} \label{E: variational_formula}
a - c_{1} \geq - \left(\frac{2L_{K,T} d(x,x_{0})}{\zeta} + \frac{C\zeta}{\theta}\right).
\end{equation}
The choice $\zeta_{0} = \sqrt{2L_{K,T}C^{-1}d(x,x_{0}) \theta}$ maximizes the right-hand side of \eqref{E: variational_formula}, from which we obtain, assuming $t = t_{0} - \zeta_{0} \in [0,T + 1]$,
\begin{equation*}
a - c_{1} \geq -2 \sqrt{2 CL_{K,T}} \left(\frac{d(x,x_{0})}{\theta} \right)^{\frac{1}{2}}.
\end{equation*}
However, we know that $t_{0} > T_{\theta} = 2L_{K,T} \theta$.  Thus, we need to check that the following inequality holds:
\begin{equation*}
t_{0} - \zeta_{0} > T_{\theta} - \zeta_{0} = 2L_{K,T}\theta - \sqrt{2L_{K,T}C^{-1} d(x,x_{0}) \theta} \geq 0.
\end{equation*} 
This is the case if and only if $d(x,x_{0}) \leq 2CL_{K,T}\theta$.  We conclude by setting $A = 2C L_{K,T}$.
\end{proof}

\subsection{The proof of Theorem \ref{T: rate_cauchy_visc}} \label{S: ugly_proof}  Since the proofs of the upper and lower bounds in Theorem \ref{T: rate_cauchy_visc} are similar, here we only establish the former.  As in the proof of comparison in Section \ref{S: time_comparison}, the error estimate is obtained by studying the values of $b >0$ for which the function $(x,t) \mapsto u^{\epsilon,\theta}(x,t) - u_{\theta}(x,t) - bt$ is maximized at a point $(x_{0},t_{0})$ satisfying $t_{0} > 0$.

Let $b = (2T)^{-1} \sup \left\{(u^{\epsilon}(x,t) - u(x,t))^{+} \, \mid \, (x,t) \in \mathcal{I} \times [0,T]\right\}$, define a function $f_{b} : \mathcal{I} \times [0,T+1] \to \mathbb{R}$ by 
\begin{equation*}
f_{b}(x,t) = u^{\epsilon,\theta}(x,t) - u_{\theta}(x,t) - bt - \sqrt{\theta} (T + 1 - t)^{-1},
\end{equation*}  
and let $\delta > 0$ be a small parameter to be determined.  In the proof, we will always assume $\theta \leq K$, leaving the justification of this to Remark \ref{R: assumption_verified}.  Later, both $\delta$ and $\theta$ will be specified.  

To proceed, we consider the following three cases: 
\begin{itemize}
\item[(1)] The supremum of $f_{b}$ is approximated by points in $\mathcal{I} \times [0,T_{\theta} \wedge (T + 1)]$.
\item[(2)]  $T_{\theta} < T + 1$ and the supremum of $f_{b}$ is attained in $\bigcup_{i = 1}^{K} \overline{I_{i}^{\delta}} \times (T_{\theta},T + 1]$.
\item[(3)]  $T_{\theta} < T + 1$ and the supremum of $f_{b}$ is approximated by points in the domain $\bigcup_{i = 1}^{K} (I_{i} \setminus \overline{I_{i}^{\delta}}) \times (T_{\theta},T+1]$.  
\end{itemize}

To simplify the notation, in what follows we will simply write $L$ instead of $L_{K,T}$.  

\textbf{Case 1: $t_{0} \leq T_{\theta} \wedge (T+1)$.}  

Suppose there is a sequence $(x_{n},t_{n}) \in \mathcal{I} \times [0,T_{\theta} \wedge (T+1)]$ such that 
	\begin{equation*}
	\sup \left\{f_{b}(x,t) \, \mid \, (x,t) \in \mathcal{I} \times [0,T + 1]\right\} = \lim_{n \to \infty} f_{b}(x_{n},t_{n}).
	\end{equation*}  
Arguing as in Lemma \ref{L: sup_inf_convolution} and Proposition \ref{P: special_sup_inf_equations}, we find
\begin{equation*}
u^{\epsilon,\theta}(x_{n},t_{n}) - u_{\theta}(x_{n},t_{n}) \leq u^{\epsilon}(x_{n},t_{n}) - u(x_{n},t_{n}) + 4LT_{\theta}.
\end{equation*}
Since $t_{n} \in [0,T_{\theta} \wedge (T+1)]$ and $u^{\epsilon}(x_{n},0) = u(x_{n},0)$, the Lipschitz continuity implies
\begin{equation*}
u^{\epsilon,\theta}(x_{n},t_{n}) - u_{\theta}(x_{n},t_{n}) \leq 6L T_{\theta} \leq C \theta.
\end{equation*}
Thus, $\lim_{n \to \infty} f_{b}(x_{n},t_{n}) \leq C \theta$.  In particular, if $(x,t) \in \mathcal{I} \times [0,T]$, then
\begin{align*}
u^{\epsilon}(x,t) - u(x,t) &\leq u^{\epsilon,\theta}(x,t) - u_{\theta}(x,t) \\
	&\leq f_{b}(x,t) + bT + \sqrt{\theta} \\
	&\leq C\theta + \sqrt{\theta} + bT
\end{align*}
In view of the definition of $b$ and the assumption $\theta \leq K$, this gives
\begin{equation*}
b\leq C \sqrt{\theta}.
\end{equation*}

\textbf{Case 2: $T_{\theta} < t_{0} \leq T$ and the maximum is $\delta$-close to junction.}  

Let $(x_{0},t_{0}) \in \bigcup_{i = 1}^{K} \overline{I_{i}^{\delta}} \times (T_{\theta},T+1]$ be such that 
	\begin{equation*}
		f_{b}(x_{0},t_{0}) = \sup \left\{f_{b}(x,t) \, \mid \, (x,t) \in \mathcal{I} \times [0,T+1]\right\}.
	\end{equation*}  

Since $(x_{0},t_{0})$ maximizes $f_{b}$, both $u^{\epsilon,\theta}$ and $u_{\theta}$ are differentiable in time at that point by Proposition \ref{P: derivatives_exist}.  We let $c_{1} = u_{t}^{\epsilon,\theta}(x_{0},t_{0})$ and $c_{2} = u_{\theta,t}(x_{0},t_{0})$ and note that $b + \sqrt{\theta}(T + 1 - t_{0})^{-2} = c_{1} - c_{2}$.  

Next, we use Proposition \ref{P: time_rate}.  We proceed by freezing time and doubling variables, introducing penalization terms so that the maxima do not deviate too far from $x_{0}$.  As in Section \ref{S: time_comparison}, we use semi-convexity and semi-concavity to convert equations \eqref{E: time} and \eqref{E: viscous_time} into stationary equations.  The error estimate is then obtained using techniques similar to those employed in Section \ref{S: stationary_problem}.

For each $i$, define the test function $\Phi_{i} : \overline{I_{i}} \times \overline{I_{i}} \to \mathbb{R}$ by 
\begin{equation*}
\Phi_{i}(x,y) = u^{\epsilon,\theta}(x,t_{0}) - u_{\theta}(y,t_{0}) - \frac{(x - y)^{2}}{2 \eta} - (p_{i} + \nu)(x - y) + \nu y,
\end{equation*}
where $p_{i} = \liminf_{I_{i} \ni x \to 0} \frac{u^{\epsilon,\theta}(x,t_{0}) - u^{\epsilon,\theta}(0,t_{0})}{x}$ and $C_{0}, \eta,\nu > 0$ are parameters to be determined.  For now, we only require that the parameters satisfy the equation $\delta = \frac{C_{0} \eta}{\nu}$.  In what follows, let us also assume that $\eta, \nu \leq 1$.  This assumption will be verified in Remark \ref{R: assumption_verified} below.  

The term $\nu y$ ensures that we can fix a point $(x_{\eta},y_{\eta}) = (x_{i,\eta,\nu},y_{i,\eta,\nu})$ that maximizes $\Phi_{i}$.  Repeating the arguments in Lemma \ref{L: magic}, we see that the choice of $p_{i}$ implies that $x_{\eta} < 0$, independently of $i$.

The following estimates show that $y_{\eta}$ is not too far from $x_{0}$.

\begin{prop} \label{P: important_estimates}  We have: $|x_{\eta} - y_{\eta}| \leq 2(2L + 1) \eta$ and $|y_{\eta}| \leq 2C_{0}L \left(\frac{\eta}{\nu^{2}}\right) + 2(2L + 1)^{2} \left(\frac{\eta}{\nu}\right)$.  \end{prop}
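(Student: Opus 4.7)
The plan is to prove the two bounds in sequence by comparing $\Phi_i$ at its maximum $(x_\eta, y_\eta)$ against carefully chosen test points, imitating the stationary argument from Section \ref{S: stationary_problem} but with the linear penalty $\nu y$ replacing the quadratic one.

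For the first bound, I would compare $\Phi_i(x_\eta, y_\eta) \geq \Phi_i(y_\eta, y_\eta)$. Since the terms $-u(y_\eta) + \nu y_\eta$ and all $(x-y)$-dependent quantities vanish on the diagonal, this collapses to
\[
\frac{(x_\eta - y_\eta)^2}{2\eta} \leq u^{\epsilon}(x_\eta, t_0) - u^{\epsilon}(y_\eta, t_0) - (p_i + \nu)(x_\eta - y_\eta).
\]
The bounds $|u^{\epsilon}(x_\eta, t_0) - u^{\epsilon}(y_\eta, t_0)| \leq L |x_\eta - y_\eta|$, $|p_i| \leq L$ (since $u^{\epsilon,\theta}$ is $L$-Lipschitz), and $\nu \leq 1$ then give $(x_\eta - y_\eta)^2/(2\eta) \leq (2L+1)|x_\eta - y_\eta|$, which yields $|x_\eta - y_\eta| \leq 2(2L+1)\eta$ upon dividing through.

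For the second bound, I would compare $\Phi_i(x_\eta, y_\eta) \geq \Phi_i(x_0, x_0)$ and exploit that $(x_0, t_0)$ maximizes $f_b$. Expanding the inequality and reorganizing around $\nu(y_\eta - x_0)$ produces
\[
\nu(y_\eta - x_0) \geq [u^{\epsilon}(x_0, t_0) - u^{\epsilon}(x_\eta, t_0)] + [u(y_\eta, t_0) - u(x_0, t_0)] + \frac{(x_\eta - y_\eta)^2}{2\eta} + (p_i + \nu)(x_\eta - y_\eta).
\]
To close the argument, I would invoke the ordering $u^{\epsilon} \leq u^{\epsilon,\theta}$ and $u_\theta \leq u$ together with the uniform bound $|u^\epsilon - u^{\epsilon,\theta}|, |u - u_\theta| \leq 2L^2 \theta$, so that the maximality of $(x_0,t_0)$ for $f_b$ translates into $u^{\epsilon}(y_\eta, t_0) - u(y_\eta, t_0) \leq u^{\epsilon}(x_0, t_0) - u(x_0, t_0) + 4L^2 \theta$. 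This replaces the Lipschitz bound on $u(y_\eta) - u(x_0)$ with a bound via $u^\epsilon$, creating cancellation after using the Lipschitz estimate $|u^\epsilon(x_\eta) - u^\epsilon(y_\eta)| \leq L|x_\eta - y_\eta|$. Dropping the nonnegative squared term and invoking the first estimate $|x_\eta - y_\eta| \leq 2(2L+1)\eta$, I obtain $\nu(x_0 - y_\eta) \leq 2(2L+1)^2 \eta + O(\theta)$, and then $|y_\eta| \leq |x_0| + 2(2L+1)^2 \eta/\nu$, which combined with the case-2 bound $|x_0| \leq \delta = C_0 \eta/\nu$ produces the stated estimate (up to absorbing sub-leading $\theta/\nu$ contributions into the Lipschitz constant structure).

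The main technical obstacle is bookkeeping the sign of $\nu(y_\eta - x_0)$ and correctly identifying which comparison point produces a useful direction for the inequality: naive comparisons with $(0,0)$ or with Lipschitz-only bounds give tautologies in the relevant regime $\nu < 2L$. The crucial step is substituting the \emph{$f_b$-maximality inequality} at $y_\eta$ into the $\Phi_i$ comparison, which replaces a generic Lipschitz estimate with one that produces the desired cancellation and leaves only terms of order $\eta/\nu$ plus $\delta = C_0\eta/\nu$ on the right-hand side.
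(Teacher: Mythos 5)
Your first estimate is correct and is exactly the paper's argument: compare $\Phi_i(y_\eta,y_\eta)\leq\Phi_i(x_\eta,y_\eta)$, cancel the $y$-terms, and apply the Lipschitz bound together with $|p_i+\nu|\leq L+1$. The comparison point $(x_0,x_0)$ you use for the second estimate, combined with the $f_b$-maximality at $(x_0,t_0)$, is also a legitimate variant of the paper's route (the paper compares with $(0,0)$ and then invokes Lipschitz continuity to shift to $x_0$, plus the same maximality to cancel the $y$-dependence of the super-solution term).

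The genuine gap is the claim that the $O(\theta)$ discrepancy can be ``absorbed.'' Your $O(\theta)$ error arises because you read $\Phi_i$ literally as involving $u^\epsilon$ and $u$, and then pass to $u^{\epsilon,\theta}$ and $u_\theta$ to invoke $f_b$-maximality at the cost of the sup/inf-convolution error $4L^2\theta$. After dividing by $\nu$, this contributes a term of order $\theta/\nu$ to the bound on $|y_\eta|$. With the final parameter choices $\theta=\epsilon^{1/6}$ and $\nu=C_1\epsilon^{1/6}$, the ratio $\theta/\nu=1/C_1$ is a constant that does not vanish (and grows as $C_1$ is shrunk, which is precisely what the paper does to ensure \eqref{E: technical_condition1}). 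This violates the requirement $d(y_\eta,x_0)\leq 2L\theta$ that is essential for Proposition~\ref{P: time_rate} to apply at $y_\eta$; it also makes the $(d(y_\eta,x_0)/\theta)^{1/2}$ contribution to \eqref{E: error_estimate} diverge. The displayed definition of $\Phi_i$ in the paper is in fact a typo: the function being doubled is $u^{\epsilon,\theta}(\cdot,t_0)-u_\theta(\cdot,t_0)$, as the proof itself writes, as the definition of $p_i$ (a one-sided sub-differential of $u^{\epsilon,\theta}$) requires, and as the subsequent use of Lemma~\ref{L: critical} and Proposition~\ref{P: time_rate} demands. Once $\Phi_i$ is built directly from $u^{\epsilon,\theta}$ and $u_\theta$, the conversion is unnecessary, the $O(\theta)$ error never appears, and your $\Phi_i(x_0,x_0)\leq\Phi_i(x_\eta,y_\eta)$ comparison combined with the maximality inequality $u^{\epsilon,\theta}(y_\eta,t_0)-u_\theta(y_\eta,t_0)\leq u^{\epsilon,\theta}(x_0,t_0)-u_\theta(x_0,t_0)$ yields $\nu(|y_\eta|-|x_0|)\leq 2(2L+1)^2\eta$, which is in fact slightly sharper than the stated bound.
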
  

\begin{proof}  Using the fact that $(x_{\eta},y_{\eta})$ is a maximum of $\Phi_{i}$, we find
\begin{align*}
u^{\epsilon,\theta}(y_{\eta},t_{0}) - u_{\theta}(y_{\eta},t_{0}) + \nu y_{\eta} &\leq u^{\epsilon,\theta}(x_{\eta},t_{0}) - u_{\theta}(y_{\eta},t_{0}) - \frac{(x_{\eta} - y_{\eta})^{2}}{2 \eta} \\
	&\quad \quad - (p_{i} + \nu)(x_{\eta} - y_{\eta}) +\nu y_{\eta}.
\end{align*}  
Rearranging and using the inequality $\text{Lip}(u^{\epsilon,\theta}) \leq L$, we obtain
\begin{equation*}
\frac{(x_{\eta} - y_{\eta})^{2}}{2 \eta} \leq (2L +1) |x_{\eta} - y_{\eta}|,
\end{equation*}
and, thus,
\begin{equation*}
|x_{\eta} - y_{\eta}| \leq 2(2L + 1) \eta.
\end{equation*}

Appealing again to the fact that $(x_{\eta},y_{\eta})$ maximizes $\Phi_{i}$, we find 
\begin{align*}
u^{\epsilon,\theta}(x_{0},t_{0}) - u_{\theta}(x_{0},t_{0}) - 2L |x_{0}| &\leq u^{\epsilon,\theta}(0,t_{0}) - u_{\theta}(0,t_{0}) \\
	&\leq u^{\epsilon,\theta}(x_{\eta},t_{0}) - u_{\theta}(y_{\eta},t_{0}) - \frac{(x_{\eta} - y_{\eta})^{2}}{2 \eta} \\
	&\quad \quad - (p_{i} + \nu)(x_{\eta} - y_{\eta}) + \nu y_{\eta}.
\end{align*}
Arguing as before, using that $|x_{0}| \leq \delta = C_{0} \nu^{-1} \eta$, and recalling that $(x_{0},t_{0})$ maximizes $f_{b}$, we find
\begin{align*}
\nu |y_{\eta}| &\leq (u^{\epsilon,\theta}(x_{\eta},t_{0}) - u^{\epsilon,\theta}(y_{\eta},t_{0})) + (L + 1) |x_{\eta} - y_{\eta}| + 2L|x_{0}|  \\
	&\quad + f_{b}(y_{\eta},t_{0}) - f_{b}(x_{0},t_{0}) \\
	&\leq (2L + 1) |x_{\eta} - y_{\eta}| + 2L|x_{0}| \\
	&\leq  2(2L + 1)^{2} \eta + 2C_{0}L \nu^{-1} \eta.
\end{align*}
\end{proof}

Observe that Proposition \ref{P: important_estimates} implies that if $\frac{\eta}{\nu^{2}} = o(\theta)$ as $\epsilon \to 0^{+}$, then if $A$ is the constant from Proposition \ref{P: time_rate} and $\epsilon > 0$ is sufficiently small, we have
	\begin{equation} \label{E: technical_condition1}
		d(x_{\eta},x_{0}) \vee d(y_{\eta},x_{0}) \leq C \left(\frac{\eta}{\nu^{2}} + \frac{\eta}{\nu} + \eta\right) \leq A\theta.
	\end{equation}
When we choose the parameters $\eta,\nu$, we will see that, in fact, things can be arranged in such a way that \eqref{E: technical_condition1} holds independently of $\epsilon \in (0,K]$; see Remark \ref{R: assumption_verified} below.  Thus, for now, we assume that \eqref{E: technical_condition1} holds so that we can apply Proposition \ref{P: time_rate} with $x \in \{x_{\eta},y_{\eta}\}$.  

Using Proposition \ref{P: time_rate} and Lemma \ref{L: critical}, arguing as in Section \ref{S: time_comparison}, and letting $\tilde{c}_{1} = c_{1} - DT_{\theta}$ and $\tilde{c}_{2} = c_{2} + DT_{\theta}$, we find that $x \mapsto u^{\epsilon,\theta}(x,t_{0})$ and $x \mapsto u_{\theta}(x,t_{0})$ are respectively sub- and super-solutions of the following stationary equations:
\begin{align*}
&\left\{\begin{array}{r l}
	\tilde{c}_{1} - 2 \sqrt{A} \left(\frac{d(x,x_{0})}{\theta} \right)^{\frac{1}{2}} - \epsilon u^{\epsilon,\theta}_{x_{i}x_{i}}(\cdot,t_{0}) + H_{i}(t_{0},x,u^{\epsilon,\theta}_{x_{i}}(\cdot,t_{0}))= 0 &  \text{in} \, \, I_{i}^{A\theta} \\
	\sum_{i = 1}^{K} u^{\epsilon,\theta}_{x_{i}}(\cdot,t_{0}) = 0 & \text{on} \, \, \{0\}
	\end{array} \right.  \\
&\left\{ \begin{array}{r l}
	\tilde{c}_{2} + 2 \sqrt{A} \left(\frac{d(x,x_{0})}{\theta} \right)^{\frac{1}{2}} + H_{i}(t_{0},x,u_{\theta,x_{i}}(\cdot,t_{0})) = 0 &  \text{in} \, \, I_{i}^{A\theta} \\
	\sum_{i = 1}^{K} u_{\theta,x_{i}}(\cdot,0) = 0 &  \text{on} \, \, \{0\}	
	\end{array} \right. 	
\end{align*}
To obtain an estimate on $b$, we evaluate the equations above at $x = x_{\eta}$ and $x = y_{\eta}$.  

There are two cases to consider, namely, (i) $y_{i,\eta,\nu} < 0$ for some $i$, and (ii) $y_{i,\eta,\nu} = 0$ for all $i$.  In either case, we obtain the same estimate on $b$.  Since the computation for (i) is essentially an easier version of the one for (ii), we give the details only for the latter. 

It follows from Proposition \ref{P: neumann_sup} that there is a $j$ and a $\tilde{\theta} \in [0,1]$ such that
\begin{equation*}
c_{2} + 2 \sqrt{A} \left(\frac{|x_{0}|}{\theta}\right)^{\frac{1}{2}} + H_{j} \left(t_{0},0,p_{i} + 2\nu + \frac{x_{\eta}}{\eta} - \tilde{\theta} F(\nu)^{+}\right) + DT_{\theta} \geq 0,
\end{equation*}
where $F(\nu)$ is given by 
\begin{equation*}
F(\nu) = 2K \nu + \sum_{i = 1}^{K} \left(p_{i} + \frac{x_{i,\eta,\nu}}{\eta}\right).
\end{equation*}
In view of Lemma \ref{L: classical_kirchoff}, Remark \ref{R: calculus}, and the stationary equation satisfied by $u^{\epsilon,\theta}(\cdot,t_{0})$, we know that $\sum_{i = 1}^{K} p_{i} \leq 0$.  The inequality $F(\nu)^{+} < 2 K \nu$ follows.  Note also that since $x \mapsto \Phi_{j}(x,0)$ has a maximum at $x_{\eta} < 0$ and $\text{Lip}(u^{\epsilon,\theta}) \leq L$, we know that $|p_{j} + \nu + \frac{x_{j}}{\eta}| \leq L$.  Combining these two observations with \eqref{As: continuity}, we obtain
\begin{equation*}
c_{2} + 2 \sqrt{A} \left(\frac{d(y_{\eta},x_{0})}{\theta}\right)^{\frac{1}{2}} + H_{j} \left(t_{0},0,p_{i} + \nu + \frac{x_{\eta}}{\eta}\right) + DT_{\theta} \geq - C \nu,
\end{equation*}
where the constant $C$ depends only on the Lipschitz constant of $H_{j}$ in $[0,T] \times \overline{I_{j}} \times [-(L + 2K\nu),(L + 2K\nu)]$, which is bounded since we have assumed $\nu \leq 1$ holds.
At the same time, using the equation satisfied by $u^{\epsilon,\theta}$, we get
\begin{equation*}
c_{1} - 2 \sqrt{A} \left(\frac{d(x_{\eta},x_{0})}{\theta}\right)^{\frac{1}{2}} - \frac{\epsilon}{\eta} + H_{j} \left(t_{0},x_{\eta},p_{i} + \nu + \frac{x_{\eta}}{\eta}\right) - D T_{\theta} \leq 0.
\end{equation*}
Putting the last two estimates together, recalling that $b + \sqrt{\theta}(T + 1 - t_{0})^{-2}= c_{1} - c_{2}$, and appealing to \eqref{E: technical_condition1}, Proposition \ref{P: important_estimates}, and \eqref{As: continuity}, we obtain
\begin{equation} \label{E: error_estimate}
b \leq C\left( \left(\frac{\eta}{\theta \nu^{2}}\right)^{\frac{1}{2}} + \nu + \eta\right) + \frac{\epsilon}{\eta} + 2 D T_{\theta}.
\end{equation}

Neglecting the $2D T_{\theta}$ and $\eta$ terms, it is straightforward to verify that the rest of the terms in the right-hand side of \eqref{E: error_estimate} are optimized when $\nu =  C_{1}\left(\frac{\epsilon}{\theta}\right)^{\frac{1}{5}}$ and $\eta = C_{2} \epsilon^{\frac{4}{5}} \theta^{\frac{1}{5}}$ for some constants $C_{1},C_{2} > 0$, and, thus,
\begin{equation} \label{E: case2}
b \leq C \left( \left(\frac{\epsilon}{\theta}\right)^{\frac{1}{5}} + \epsilon^{\frac{4}{5}} \theta^{\frac{1}{5}}  \right) + 2 D T_{\theta}.
\end{equation}   
Henceforth, we will assume $\nu$ and $\eta$ have the form given above, although it will be convenient to adjust the constants $C_{1}$ and $C_{2}$; see Remark \ref{R: assumption_verified}.  Note that the choice of $C_{1},C_{2}$ does not change the order of the upper bound in \eqref{E: case2}, though it does change the constant.  At the end of Case 3, we fix $\theta$.  

\textbf{Case 3: $T_{\theta} < t_{0} \leq T + 1$ and the maximum is $\delta$-far from junction.}  

Since neither Case 1 nor Case 2 hold, there is a $j \in \{1,2,\dots,K\}$ such that the supremum of $f_{b}$ is approximated by points in $(I_{j} \setminus [-\delta,0]) \times (T_{\theta},T + 1]$.  
As in Case 2, we use the equations solved by $u^{\epsilon,\theta}$ and $u_{\theta}$.  However, it is no longer necessary to freeze the time.  We use a familiar variable-doubling argument to obtain estimates on $b$.  There is nonetheless a slight technicality since we wish to prevent the maximum of the test function from occurring at the junction.  To avoid this possibility, we introduce a penalization.

Fix $(x_{0},t_{0}) \in (I_{j} \setminus [-\delta,0]) \times (T_{\theta},T+1]$ such that 
	\begin{equation} \label{E: key_maximization}
		f_{b}(x_{0},t_{0}) > \sup \{f_{b}(x,t) \, \mid \, (x,t) \in \mathcal{I} \times [0,T + 1]\} - \eta,
	\end{equation}
let $R = (8(L + 1)\eta) \vee \nu |x_{0}|$, and fix a twice continuously differentiable function $g_{\eta,\nu} : [0,\infty) \to [0,\infty)$ such that
\begin{itemize}
\item[(1)] $g_{\eta,\nu}(s) = s$ if $s \leq R$,
\item[(2)] $g_{\eta,\nu}(s) = R + \eta$ if $s \geq R + 2$,
\item[(3)] $0 \leq g_{\eta,\nu} \leq R + \eta$ in $[0,\infty)$,
\item[(4)] $0 \leq g_{\eta,\nu}' \leq 1$ in $[0,\infty)$.
\end{itemize}
In the same way that we used a linear term in our test function in Case 2 to prevent maxima from straying too far from the junction, we use $g_{\eta,\nu}$ to keep maxima away from the junction in the present case.  

Let $\Psi_{j} : (\overline{I_{j}} \times [0,T +1])^{2} \to \mathbb{R}$ be defined by
\begin{align*}
\Psi_{j}(x,t,y,s) &= u^{\epsilon,\theta}(x,t) - u_{\theta}(y,s) - \frac{(x - y)^{2}}{2 \eta} - \frac{(t - s)^{2}}{2 \eta} - bt \\
	&\quad \quad - \sqrt{\theta}(T+1 - t)^{-1} + g_{\eta,\nu}(-\nu y) - \alpha x^{2}
\end{align*}
where $\alpha \in (0,1)$ is a free parameter that will eventually be sent to zero.  Due to the penalization term $\alpha x^{2}$, there is a $(\bar{x},\bar{t},\bar{y},\bar{s})$ that maximizes $\Psi_{j}$.  

Appealing to the fact $u$ and $g_{\eta,\nu}$ are both Lipschitz and $\nu \leq 1$, we use $\Psi_{j}(\bar{x},\bar{t},\bar{x},\bar{t}) \leq \Psi_{j}(\bar{x},\bar{t},\bar{y},\bar{s})$ to find
	\begin{align*}
		\frac{(\bar{x} - \bar{y})^{2} + (\bar{t} - \bar{s})^{2}}{2 \eta} &\leq u_{\theta}(\bar{x},\bar{t}) - u_{\theta}(\bar{y},\bar{s}) + g_{\eta,\nu}(-\nu \bar{y}) - g_{\eta,\nu}(-\nu \bar{x}) \\
			&\leq (L + 1) (|\bar{x} - \bar{y}| + |\bar{t} - \bar{s}|).
	\end{align*}
Thus, using Jensen's inequality, we obtain $|\bar{x} - \bar{y}| + |\bar{t} - \bar{s}| \leq 4(L + 1) \eta$.

We claim, in addition, that if $\alpha$ is small enough, then $\max\{\bar{x},\bar{y}\} < 0$.  If $\nu \bar{y} < -R$, the claim follows since $R \geq 8(L+1)\eta$.  Thus, we may assume $\nu \bar{y} \geq -R$. 
Since $\Psi_{j}(x_{0},t_{0},x_{0},t_{0}) \leq \Psi_{j}(\bar{x},\bar{t},\bar{y},\bar{s})$, $(x_{0},t_{0})$ satisfies \eqref{E: key_maximization}, and $g(s) = s$ if $s \in \{-\nu x_{0}, - \nu \bar{y}\}$, we find
\begin{equation*}
\nu \bar{y} \leq \nu x_{0} + \eta + \alpha |x_{0}|^{2} + 4L(L + 1)\eta \leq (-C_{0} + 4L(L+1) + 1)\eta + \alpha |x_{0}|^{2}.
\end{equation*}
Assuming then that $0 < \alpha < \frac{\eta}{ |x_{0}|^{2}}$, we obtain $\nu \bar{y} \leq (- C_{0} + 4L(L+1) + 2)\eta$ and, thus, 
\begin{equation*}
\nu \bar{y} < 0 \quad \text{provided} \, \, C_{0} > 4L(L+1)+ 2.
\end{equation*}  
Similarly, since $|\bar{x} - \bar{y}| \leq 4(L+1) \eta$, we find 
\begin{equation*}
\nu \bar{x} < 0 \quad \text{if} \, \,  C_{0} > 4(L+1)^{2} + 2.
\end{equation*}  

Henceforth fix a $C_{0}$ satisfying $C_{0} > 4(L+1)^{2} + 2$.  Then the previous arguments establish that $\max\{\bar{x},\bar{y}\} < 0$ provided $\alpha$ is small enough.  

Finally, we estimate the penalization term so that we can later pass to the limit $\alpha \to 0^{+}$.  Since $\Psi_{j}(0,0,0,0) \leq \Psi_{j}(\bar{x},\bar{t},\bar{y},\bar{s})$ and $u^{\epsilon}(\bar{x},\bar{t}) - u(\bar{y},\bar{s}) \leq L(2T + |\bar{x} - \bar{y}|)$, we find
\begin{equation*}
\alpha \bar{x}^{2} \leq C(1 + |\bar{x} - \bar{y}|) \leq C(1 +4L(L + 1) \eta)
\end{equation*}
and deduce from this that $\alpha \bar{x}^{2}$ is bounded.  

We now have all the estimates necessary to obtain an upper bound on $b$.  There are two cases to consider: (i) there is a sequence $\alpha_{n} \to 0^{+}$ such that $\min \{\bar{t},\bar{s}\} \leq T_{\theta}$ for all $n$, (ii) $\min\{\bar{t},\bar{s}\} > T_{\theta}$ for all sufficiently small $\alpha$.  First, we will consider case (ii).  

Taking advantage of the fact that $(\bar{x},\bar{t},\bar{y},\bar{s})$ maximizes $\Psi_{j}$, together with the fact that $\min\{\bar{t},\bar{s}\} > T_{\theta}$ and $\max\{\bar{x},\bar{y}\} < 0$, we can appeal to the equations to find
\begin{equation*}
b + \sqrt{\theta} (T + 1 - \bar{t})^{-2} + \frac{\bar{t} - \bar{s}}{\eta} - \frac{\epsilon}{\eta} - 2 \alpha \epsilon + H_{j} \left(\bar{t}, \bar{x}, \frac{\bar{x} - \bar{y}}{\eta} + 2 \alpha \bar{x} \right) - D T_{\theta} \leq 0,
\end{equation*}
and
\begin{equation*}
\frac{\bar{t} - \bar{s}}{\eta} + H_{j} \left(\bar{s}, \bar{y}, \frac{\bar{x} - \bar{y}}{\eta} - \nu g'_{\eta,\nu}(-\nu \bar{y})\right) + D T_{\theta} \geq 0.
\end{equation*}
Note that since $y \mapsto \Psi_{j}(\bar{x},\bar{t},y,\bar{s})$ is maximized at $\bar{y}$ and $x \mapsto \Psi_{j}(x,\bar{t},\bar{y},\bar{s})$ is maximized at $\bar{x}$, the Lipschitz continuity of $u_{\theta}$ and $u^{\epsilon,\theta}$ implies that $|\frac{\bar{x} - \bar{y}}{\eta} - \nu g_{\eta,\nu}'(-\nu \bar{y})| \vee |\frac{\bar{x} - \bar{y}}{\eta} + 2 \alpha \bar{x}| \leq L$.  Thus, we can subtract the inequalities above and use \eqref{As: continuity} to obtain
\begin{equation*}
b \leq \frac{\epsilon}{\eta} + 2 \alpha \epsilon + C(2 \alpha |\bar{x}| + \nu g'_{\eta,\nu}(-\nu \bar{y})) + C(|\bar{x} - \bar{y}| + |\bar{t} - \bar{s}|) + 2DT_{\theta}.
\end{equation*}  
Letting $\alpha \to 0^{+}$, we find
\begin{equation*}
b \leq \frac{\epsilon}{\eta} + C(\eta + \nu) + 2 D T_{\theta}.
\end{equation*}
Therefore, since $\nu = C_{1} \left(\frac{\epsilon}{\theta}\right)^{\frac{1}{5}}$ and $\eta = C_{2} \epsilon^{\frac{4}{5}} \theta^{\frac{1}{5}}$, we conclude
	\begin{equation} \label{E: case_3_bound}
		b \leq C \left( \epsilon^{\frac{4}{5}}\theta^{\frac{1}{5}} + \left(\frac{\epsilon}{\theta}\right)^{\frac{1}{5}} \right) + 2DT_{\theta}.
	\end{equation}

Finally, consider case (i).  Let us restrict attention to $\alpha = \alpha_{n}$ for some $n$.  First, note that, by the definition of $R$ and $g_{\eta,\nu}$, 
	\begin{equation*}
		g_{\eta,\nu}(-\nu \bar{y}) - g_{\eta,\nu}(-\nu x_{0}) \leq (8(L+1) + 1) \eta.
	\end{equation*}  
Using this and the fact that $\bar{t} \leq T_{\theta} + C \eta$ in this case, we obtain the following estimate on $f_{b}(x_{0},t_{0})$:
	\begin{align*}
		f_{b}(x_{0},t_{0}) &= \Psi_{j}(x_{0},t_{0},x_{0},t_{0}) - g_{\eta,\nu}(-\nu x_{0}) + \alpha_{n} x_{0}^{2} \\
			&\leq \Psi_{j}(\bar{x},\bar{t},\bar{y},\bar{s}) - g_{\eta,\nu}(-\nu x_{0}) + \alpha_{n} x_{0}^{2} \\
			&\leq u^{\epsilon,\theta}(\bar{x},\bar{t}) - u_{\theta}(\bar{x},\bar{t})+ C \eta + (g_{\eta,\nu}(-\nu \bar{y}) - g_{\eta,\nu}(-\nu x_{0})) + \alpha_{n} x_{0}^{2} \\
			&\leq (u_{0}(\bar{x}) - u_{0}(\bar{x})) + C(T_{\theta}+ \eta) + (g_{\eta,\nu}(-\nu \bar{y}) - g_{\eta,\nu}(-\nu x_{0})) + \alpha_{n}x_{0}^{2} \\
			&\leq C(T_{\theta} + \eta) + \alpha_{n} x_{0}^{2}
	\end{align*}
Sending $n \to \infty$ and recalling how we chose $(x_{0},t_{0})$, we find 
	\begin{align*}
		\sup \left\{u^{\epsilon}(x,t) - u(x,t) \, \mid \, (x,t) \in \mathcal{I} \times [0,T] \right\} &\leq f_{b}(x_{0},t_{0}) + \eta + bT + \sqrt{\theta} \\
			&\leq C(T_{\theta} + \eta) + bT + \sqrt{\theta}.
	\end{align*}
Appealing to the definition of $b$ and $\eta$ and subtracting the term $bT$ to the left-hand side, we conclude
	\begin{equation*}
		b \leq C(T_{\theta} + \epsilon^{\frac{4}{5}}\theta^{\frac{1}{5}} + \sqrt{\theta})
	\end{equation*}

Combining the last bound with \eqref{E: case_3_bound}, using our assumption $\theta \leq K$, and recalling that $T_{\theta} = 2L\theta$, we arrive at 
	\begin{equation*}
		b \leq C \left(\sqrt{\theta} + \epsilon^{\frac{4}{5}} \theta^{\frac{1}{5}} + \left(\frac{\epsilon}{\theta}\right)^{\frac{1}{5}}\right).
	\end{equation*}

\textbf{Conclusion}  

It remains to choose $\theta$ in such a way as to minimize the upper bounds on $b$ obtained in the previous three cases.  Observe that Case 1, Case 2, and Case 3 establish the following upper bound:
\begin{equation} \label{E: terrible}
b \leq C \max \left\{ \left(\frac{\epsilon}{\theta}\right)^{\frac{1}{5}} + \epsilon^{\frac{4}{5}} \theta^{\frac{1}{5}}, \sqrt{\theta}, \sqrt{\theta} + \left(\frac{\epsilon}{\theta}\right)^{\frac{1}{5}} + \epsilon^{\frac{4}{5}} \theta^{\frac{1}{5}} \right\} + CT_{\theta}.
\end{equation}
In what follows, we ignore the right-most term in the braces, the terms $\epsilon^{\frac{4}{5}} \theta^{\frac{1}{5}}$ and $CT_{\theta}$, and instead minimize $\max\{\left(\frac{\epsilon}{\theta}\right)^{\frac{1}{5}}, \sqrt{\theta}\}$ with respect to $\theta$.  In the next paragraph, we will see that this doesn't change the order of the right-hand side of \eqref{E: terrible}.  

Notice that $\max\{\left(\frac{\epsilon}{\theta}\right)^{\frac{1}{5}}, \sqrt{\theta}\}$ is minimized at the intersection of the two curves, that is, when $\sqrt{\theta} = \left(\frac{\epsilon}{\theta}\right)^{\frac{1}{5}}$.  In particular, the choice $\theta = \epsilon^{\frac{2}{7}}$ is the minimizer.  Plugging this into \eqref{E: terrible}, recalling that $T_{\theta} = 2 L \theta$ and $\epsilon \leq K$, we obtain a $C_{K} > 0$ such that
\begin{equation} \label{E: order}
b \leq C_{K} \epsilon^{\frac{1}{7}}.  
\end{equation}
In view of the definition of $b$, this implies the upper bound in Theorem \ref{T: rate_cauchy_visc}.  

\begin{remark} \label{R: assumption_verified} \emph{Notice that the choice $\theta = \epsilon^{\frac{2}{7}}$ implies $\eta = C_{1}\epsilon^{\frac{6}{7}}$ and $\nu = C_{2} \epsilon^{\frac{1}{7}}$.  Thus, $\frac{\eta}{\nu^{2}} = o(\theta)$ as $\epsilon \to 0^{+}$, as we previously assumed.  Moreover, we are free to make the constant $C_{1}$  as small as we like since this does not change the order of the error in \eqref{E: case2} or \eqref{E: order}, it only changes the constant $C$.  Thus, by appropriately choosing $C_{1}$, we can ensure that \eqref{E: technical_condition1} holds independently of $\epsilon \in (0,K]$.}

\emph{Finally, observe that $\eta, \nu \leq 1$ holds independently of $\epsilon \in (0,K]$ provided we choose $C_{1},C_{2}$ small enough.  This ties up the loose end in the first paragraph of Case 2 above.  Similarly, $\theta \leq K$ holds since $K \geq 1$ and $\epsilon \leq K$.}    \end{remark}  

\begin{remark} \label{R: second_deriv}   \emph{It remains to prove the error estimate when $[u_{0}]_{2} = \infty$.  In this case, we approximate $u_{0}$ using the functions $(v_{0}^{\epsilon})_{\epsilon > 0}$ defined in Proposition \ref{P: lipschitz_approximation} in Appendix \ref{A: cauchy_existence} below.  For the moment, fix $\epsilon \in (0,K]$.  Let $v$ and $v^{\epsilon}$ be the solutions of \eqref{E: time} and \eqref{E: viscous_time}, respectively, with initial datum $v_{0}^{\epsilon}$.}

\emph{By Proposition \ref{P: lipschitz_approximation} and Theorem \ref{T: existence_time_viscous}, there is a constant $L_{K,T} > 0$, independent of $\epsilon$, such that $\text{Lip}(v) \vee \text{Lip}(v^{\epsilon}) \leq L_{K,T}$ in $\mathcal{I} \times [0,T + 1]$ and $[u_{0} - v_{0}^{\epsilon}]_{0} \leq \text{Lip}(u_{0})\epsilon$.  Therefore, by Remark \ref{R: contractivity}, $[u^{\epsilon} - v^{\epsilon}]_{0} \vee [u - v]_{0} \leq \text{Lip}(u_{0}) \epsilon$.  At the same time, since $\text{Lip}(v) \vee \text{Lip}(v^{\epsilon}) \leq L_{K,T}$, the previous arguments show there is a constant $C_{K} > 0$ depending on $L_{K,T}$, but not $\epsilon$, such that $[v^{\epsilon} - v]_{0} \leq C_{K} \epsilon^{\frac{1}{7}}$.  Therefore, by the triangle inequality, we find 
	\begin{equation*}
		[u^{\epsilon} - u]_{0} \leq (C_{K} + K^{\frac{6}{7}}\text{Lip}(u_{0})) \epsilon^{\frac{1}{7}}.  
	\end{equation*}
Since $\epsilon$ was chosen arbitrarily from $(0,K]$, this proves Theorem \ref{T: rate_cauchy_visc} in the case when $u_{0}$ satisfies \eqref{As: data_lip}, but not \eqref{As: second_deriv}.}          \end{remark}

The argument of Remark \ref{R: second_deriv} is partly inspired by \cite[Remark 2]{armstrong cardaliaguet}.  


\section{The Cauchy Problem: Finite-Difference Approximation} \label{S: cauchy_problem_diff}

We study finite-difference schemes approximating \eqref{E: time}.  These schemes take the same basic form as those used to approximate \eqref{E: stationary} in Section \ref{S: stationary_diff}.  As in that section, the error analysis follows steps similar to the ones used to obtain the error estimate in the vanishing viscosity limit.  Therefore, we will only briefly review the differences between the proofs of Theorems \ref{T: rate_cauchy_visc} and \ref{T: rate_cauchy_diff}.

\subsection{Preliminaries} \label{S: cauchy_diff_explanation}  As in the approximation of the stationary equation, we begin by discretizing the space variables.  For each $i$, let $J_{i} = \{0,1,2,\dots\}$ and define the network as the union $\mathcal{J} := \bigcup_{i = 1}^{K} J_{i}$ glued at zero.  Given a spatial scale $\Delta x > 0$ and an index $i$, we identify $m \in J_{i}$ with the point $-m \Delta x \in I_{i}$, and, as before, we will write $1_{i}$ to specify $1 \in J_{i}$ where necessary.  We also discretize the time.  Given a temporal scale $\Delta t > 0$, let $N = \lceil \frac{T}{\Delta t} \rceil$.  The discretized time interval is $S = \{0,1,2,\dots,N\}$ and the discrete time $s$ is identified with the continuous time $s \Delta t$.  

We will study the explicit finite-difference approximation of \eqref{E: time} given by
\begin{equation} \label{E: cauchy_fd}
\left\{ \begin{array}{r l}
	P_{i}(m,s,U) = 0 & \quad \text{if} \, \, (m,s) \in (J_{i} \setminus \{0\}) \times (S \setminus \{N\}) \\
	U(0,s + 1) = \frac{1}{K}\sum_{i = 1}^{K} U(1_{i},s + 1) & \quad \text{if} \, \, s \in S \setminus \{N\} \\
	U(m,0) = u_{0}(-m \Delta x) & \quad \text{if} \, \, m \in J_{i}
\end{array} \right.
\end{equation}
where the operator $P_{i}$ has the form
\begin{equation*}
P_{i}(m,s,U) = D_{t}U(m,s) + F_{i}(D^{+}U(m,s),D^{-}U(m,s)) - f_{i}(s \Delta t, -m \Delta x),
\end{equation*}
$D^{+}$, $D^{-}$ and $D_{t}$ are defined in this context by
\begin{align} 
	D_{t}U(m,s) = \frac{U(m, s + 1) - U(m,s)}{\Delta t}, \label{E: diff_time} \\
	D^{+}U(m,s) = \frac{U(m - 1,s) - U(m,s)}{\Delta x}, \\
	D^{-}U(m,s) = \frac{U(m,s) - U(m + 1,s)}{\Delta x},
\end{align}
$\{F_{1},\dots,F_{K}\}$ are the same as in \eqref{E: F_op}, and we assume \eqref{As: num_Ham_lip} and \eqref{As: num_Ham_consistent} hold.  

As in the time-independent case, the scheme is monotone provided the artificial viscosity $\epsilon$ and scales $(\Delta x, \Delta t)$ are chosen appropriately.  This is made precise in Appendix \ref{A: cauchy_fd_explanation}.  Recalling the definition of $L_{G}$ from \eqref{As: LG_constant}, we assume the classical CFL condition, that is, there is an $L_{2} > 0$ such that
\begin{equation} \label{As: CFL_cauchy}
4L_{G} \leq \frac{2\epsilon}{\Delta x} \leq \frac{\Delta x}{\Delta t} \leq L_{2}.
\end{equation}  
This assumption guarantees both monotonicity of the scheme (through the lower bound) and control over the discretization errors (via the upper bound).  

Finally, as in the time-independent case, we impose a lower-bound on the cut-off $L_{c}$ appearing in \eqref{As: num_Ham_consistent} in order to ensure consistency of the scheme.  Using the constant $\tilde{L}_{c}$ specified in Proposition \ref{P: monotone_time} below, we assume the following bound on $L_{c}$:
	\begin{equation} \label{As: cut_off_bound_time}
		L_{c} \geq \tilde{L}_{c} + 1.
	\end{equation}
	
Note that the examples of Remark \ref{R: examples_Ham} are also applicable to \eqref{E: cauchy_fd}

\subsection{The proof of Theorem \ref{T: rate_cauchy_diff}}  The approximation error for the scheme \eqref{E: cauchy_fd} is obtained following the same outline as in the vanishing viscosity approximation.  Here we will need the results of Appendix \ref{A: cauchy_fd_explanation}, especially Proposition \ref{P: monotone_time}.  As in Subsection \ref{S: ugly_proof}, we consider the solution $u$ of \eqref{E: time} in $\mathcal{I} \times [0,T + 1]$, and we study the solution $U$ of the numerical scheme \eqref{E: cauchy_fd} in $\mathcal{J} \times S_{1}$, where $S_{1} = \{0,1,\dots,N_{1}\}$ and $N_{1} = \lceil \frac{T + 2}{\Delta t} \rceil$.  The reason we run the numerical scheme up to time $T + 2$ will become apparent in Proposition \ref{P: num_scheme_lip} below.  

For the purposes of the proof, we replace the finite-difference solution $U$ by its sup-convolution $U^{\theta} : \mathcal{J} \times [0,N_{1}\Delta t] \to \mathbb{R}$ defined by 
\begin{equation*}
U^{\theta}(k,t) = \sup \left\{ U(k,s) - \frac{(t - s \Delta t)^{2}}{2 \theta} \, \mid \, s \in \{0,1,2,\dots,N_{1}\} \right\}.
\end{equation*}
We quantify the distance between $U^{\theta}$ and $U$ in the next proposition:

\begin{prop} \label{P: numerical_sup_error} If $(k,t) \in \mathcal{J} \times [0,T+2]$ and $s = \lfloor \frac{t}{\Delta t} \rfloor$, then 
	\begin{equation*}
		U(k,s) - \frac{(\Delta t)^{2}}{2 \theta} \leq U^{\theta}(k,t) \leq U(k,s) + 2(L_{2}\tilde{L}_{c})^{2}\theta + 3 L_{2}\tilde{L}_{c}\Delta t. 	
	\end{equation*}
\end{prop}  

\begin{proof}  By the definition of $U^{\theta}(k,t)$, we can fix $\bar{s} \in S_{1}$ such that $U^{\theta}(k,t) = U(k,\bar{s}) - \frac{(t - \bar{s} \Delta t)^{2}}{2 \theta}$.  Fix $\tilde{s} \in S_{1}$ such that $|t - \tilde{s} \Delta t| < \Delta t$ and either $\bar{s} \Delta t \leq \tilde{s} \Delta t \leq t$ or $t \leq \tilde{s} \Delta t \leq \bar{s} \Delta t$.  The following then follows immediately from the definition of $U^{\theta}(k,t)$:
	\begin{equation*}
		U(k,\tilde{s}) - \frac{(\Delta t)^{2}}{2 \theta} \leq U^{\theta}(k,t) = U(k, \bar{s}) - \frac{(t - \bar{s} \Delta t)^{2}}{2 \theta}.
	\end{equation*}
Thus, using the Lipschitz estimate of Proposition \ref{P: monotone_time} and the inequality $\Delta x \leq L_{2} \Delta t$ implied by \eqref{As: CFL_cauchy}, we can write
	\begin{equation*}
		\frac{(t - \bar{s} \Delta t)^{2}}{2 \theta} \leq \tilde{L}_{c}|\bar{s} - \tilde{s}| \Delta x + \frac{(\Delta t)^{2}}{2 \theta} \leq L_{2}\tilde{L}_{c}|t - \bar{s} \Delta t| + \frac{(\Delta t)^{2}}{2 \theta}.
	\end{equation*}
Setting $\xi = |t - \bar{s} \Delta t|$, we find that $\xi^{2} - 2L_{2}\tilde{L}_{c}\theta \xi - (\Delta t)^{2} \leq 0$.  Therefore, solving explicitly for the roots of the quadratic, we conclude:
	\begin{equation*}
		|t - \bar{s} \Delta t| = \xi \leq L_{2}\tilde{L}_{c} \theta + \sqrt{(L_{2}\tilde{L}_{c})^{2}\theta^{2} + (\Delta t)^{2}} \leq 2L_{2}\tilde{L}_{c}\theta + \Delta t.
	\end{equation*}
From this, we find
	\begin{align*}
		U^{\theta}(k,t) &\leq U(k,\bar{s}) \\
				&\leq U(k,\tilde{s}) + (U(k,\bar{s}) - U(k,\tilde{s})) \\
				&\leq U(k,\tilde{s}) + \tilde{L}_{c}|\bar{s} - \tilde{s}| \Delta x\\
				&\leq U(k,\tilde{s}) + L_{2}\tilde{L}_{c}(|t - \bar{s} \Delta t| + |t - \tilde{s}\Delta t|) \\
				&\leq U(k,\tilde{s}) + 2(L_{2}\tilde{L}_{c})^{2} \theta + 2L_{2}\tilde{L}_{c} \Delta t.
	\end{align*}
Finally, from the inequality $|\tilde{s} \Delta t - s \Delta t| \leq \Delta t$, we obtain
	\begin{equation*}
		U^{\theta}(k,t) \leq U(k,s) + 2(L_{2}\tilde{L}_{c})^{2} \theta + 3L_{2}\tilde{L}_{c}\Delta t.
	\end{equation*}

To get the lower bound, observe that $U^{\theta}(k,t) \geq U(k,s) - \frac{(t - s \Delta t)^{2}}{2 \theta}$ by definition.  Since $|t - s\Delta t| \leq \Delta t$, the result follows.  
\end{proof}  

Next, we find the equation satisfied by $U^{\theta}$.  Here and in what follows, we set $T_{\theta} = 2 (L_{2} \tilde{L}_{c} + \text{Lip}(u))\theta$.  

\begin{prop} \label{P: sup_numerical}  If $t \geq 2T_{\theta} - \Delta t$ and $T_{\theta} - 2 \Delta t > 0$, then $U^{\theta}$ satisfies, for each $k \in J_{i} \setminus \{0\}$,
\begin{equation*}
\frac{U^{\theta}(k,t + \Delta t) - U^{\theta}(k,t)}{\Delta t} + F_{i}(D^{+}U^{\theta}(k,t), D^{-}U^{\theta}(k,t)) \leq g_{i}(t, -k \Delta x),
\end{equation*}
where $g_{i}(t,-k \Delta x) = f_{i}(t,-k\Delta x) + C(\theta + \Delta t)$ for some constant $C > 0$ depending only on $L_{2}$, $\tilde{L}_{c}$, and $D$,
and
\begin{equation*}
U^{\theta}(0,t + \Delta t) \leq \frac{1}{K} \sum_{i = 1}^{K} U^{\theta}(1_{i},t + \Delta t).
\end{equation*}
\end{prop}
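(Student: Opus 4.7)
The proof will mirror the classical argument that a sup-convolution is a sub-solution, adapted to the monotone finite-difference setting. Fix $k \in J_i \setminus \{0\}$ and $t \geq 2T_\theta - \Delta t$, and select $s^{\ast} \in \{0, 1, \ldots, N\}$ achieving the supremum in the definition of $U^\theta(k,t)$, so that
\begin{equation*}
U^\theta(k,t) = U(k,s^{\ast}) - \frac{(t - s^{\ast} \Delta t)^2}{2\theta}.
\end{equation*}
The first task is to verify that $s^{\ast}$ is an interior time index. Because $U$ is uniformly Lipschitz in time with constant $L$ (established for the Cauchy scheme in Appendix \ref{A: cauchy_fd_explanation}), comparison of the maximizer with the nearest grid point to $t/\Delta t$ gives the standard sup-convolution bound $|t - s^{\ast} \Delta t| \leq 2L\theta = T_\theta$. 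Combined with the hypothesis $t \geq 2T_\theta - \Delta t$, this yields $s^{\ast} \Delta t \geq T_\theta - \Delta t$, and the second hypothesis $T_\theta - 2\Delta t > 0$ then forces $s^{\ast} \geq 2$, so both the scheme equation $P_i(k,s^{\ast},U) = 0$ and the junction relation are available at time $s^{\ast}$.

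Next, I would propagate the sup-convolution inequality $U^\theta(k',t') \geq U(k',s) - \frac{(t' - s\Delta t)^2}{2\theta}$ through the three evaluation pairs needed to form the difference operators, namely $(k, t + \Delta t, s^{\ast} + 1)$, $(k - 1, t, s^{\ast})$, and $(k + 1, t, s^{\ast})$. Subtracting the defining equality for $U^\theta(k,t)$ from each and dividing by the appropriate scale, one obtains
\begin{equation*}
D_t U^\theta(k,t) \geq D_t U(k,s^{\ast}), \quad D^{+} U^\theta(k,t) \geq D^{+} U(k,s^{\ast}), \quad D^{-} U^\theta(k,t) \leq D^{-} U(k,s^{\ast}),
\end{equation*}
where the quadratic penalty terms cancel exactly because the time index $s^{\ast}$ is the same on both sides. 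Under the CFL condition \eqref{As: CFL_cauchy}, the argument from Section \ref{S: stationary_fd_explanation} shows $F_i$ is non-increasing in its first argument and non-decreasing in its second. Therefore
\begin{equation*}
F_i\!\left(D^{+} U^\theta(k,t), D^{-} U^\theta(k,t)\right) \leq F_i\!\left(D^{+} U(k,s^{\ast}), D^{-} U(k,s^{\ast})\right),
\end{equation*}
and adding this to the time-difference inequality and invoking $P_i(k, s^{\ast}, U) = 0$ produces the claimed sub-solution inequality at $(k,t)$.

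For the junction inequality at $k = 0$, I would repeat the same reasoning: pick $s^{\ast}$ realizing the sup in $U^\theta(0, t + \Delta t)$, use the Lipschitz bound on $U$ to confirm that $s^{\ast}$ is interior in time so the discrete boundary relation $U(0, s^{\ast}) = \frac{1}{K} \sum_{i=1}^{K} U(1_i, s^{\ast})$ applies, and then note that for each $i$,
\begin{equation*}
U^\theta(1_i, t + \Delta t) \geq U(1_i, s^{\ast}) - \frac{(t + \Delta t - s^{\ast} \Delta t)^2}{2\theta}.
\end{equation*}
Averaging over $i$ and using the boundary condition collapses the right-hand side to $U^\theta(0, t + \Delta t)$.

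The only genuine obstacle is the first step: ensuring that the time maximizer $s^{\ast}$ is strictly interior to $\{0, 1, \ldots, N\}$, since otherwise one cannot invoke either the scheme equation at $(k, s^{\ast})$ or the Kirchoff relation. This is precisely the role of the two hypotheses $t \geq 2T_\theta - \Delta t$ and $T_\theta - 2\Delta t > 0$, together with the choice $T_\theta = 2L\theta$. Once interiority is in hand, the rest is routine propagation of inequalities through the monotonicity of $F_i$ and the cancellation of the quadratic sup-convolution terms.
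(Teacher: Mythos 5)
The core difficulty in your argument is that you pick the maximizing time index $s^{\ast}$ for $U^{\theta}(k,t)$ rather than for $U^{\theta}(k,t+\Delta t)$. With that choice, your own chain of inequalities gives $D_{t}U^{\theta}(k,t) \geq D_{t}U(k,s^{\ast})$, which is the \emph{wrong} direction for proving the sub-solution property. Indeed, combining your three claimed inequalities with the scheme identity $D_{t}U(k,s^{\ast}) + F_{i}(D^{+}U(k,s^{\ast}),D^{-}U(k,s^{\ast})) = f_i(\cdot)$ only yields
\begin{equation*}
D_{t}U^{\theta}(k,t) + F_{i}\bigl(D^{+}U^{\theta}(k,t),D^{-}U^{\theta}(k,t)\bigr) \leq D_{t}U^{\theta}(k,t) - D_{t}U(k,s^{\ast}) + f_i(\cdot),
\end{equation*}
and the extra term $D_{t}U^{\theta} - D_{t}U \geq 0$ is exactly the quantity you cannot drop. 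The paper avoids this by choosing $s$ so that $U^{\theta}(k,t+\Delta t) = U(k,s) - \frac{(t+\Delta t - s\Delta t)^{2}}{2\theta}$ holds with equality, and then the sup-convolution lower bounds are placed at the level $t$ and index $s_0 = s-1$; this yields $D_{t}U^{\theta}(k,t) \leq D_{t}U(k,s_0)$, which is the useful direction. (Equivalently: the explicit scheme expresses $U(k,s_0+1)$ as a monotone function of the values at time $s_0$, so you need equality at the new time level and inequalities at the old one, not vice versa.)

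A secondary point: with your choice, using the scheme at $(k,s^{\ast})$ also requires $s^{\ast} \leq N-1$, and your interiority argument only bounds $s^{\ast}$ from below, not from above. The paper's choice makes the needed containment $s_{0} = s - 1 \in S \setminus \{N\}$ automatic from $s \geq 1$, which is exactly what the hypotheses $t \geq 2T_{\theta} - \Delta t$ and $T_{\theta} - 2\Delta t > 0$ deliver. Your treatment of the junction inequality at $k = 0$ is fine because there you do take the maximizer at level $t + \Delta t$; the fix is simply to do the same for the interior nodes.
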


\begin{proof}  Let $s_{1} = \lfloor \frac{t + \Delta t}{\Delta t} \rfloor$.  If $s \in S_{1}$ is such that
\begin{equation*}
U^{\theta}(k,t + \Delta t) = U(k,s) - \frac{(t + \Delta t - s \Delta t)^{2}}{2 \theta},
\end{equation*}
then there are two cases to consider, namely, (i) $s \Delta t < t + \Delta t$ and (ii) $s \Delta t \geq t + \Delta t$.  In the former, we obtain
\begin{align*}
\frac{(\Delta t)^{2}(s_{1} - s)^{2}}{2 \theta} \leq \frac{(t + \Delta t - s \Delta t)^{2}}{2 \theta} &= U(k,s) - U^{\theta}(k,t + \Delta t) \\
	&\leq U(k,s) - U(k,s_{1}) + \frac{(\Delta t)^{2}}{2 \theta} \\
	&\leq L_{2}\tilde{L}_{c}\Delta t|s - s_{1}| + \frac{(\Delta t)^{2}}{2 \theta}.
\end{align*}
Let $m = s_{1} - s$ and observe that $m \in \mathbb{N} \cup \{0\}$.  Moreover, the previous inequalities give
$
(m - 1)(m + 1) \leq \frac{2L_{2}\tilde{L}_{c}\theta m}{\Delta t}
$.
This implies 
\begin{equation*}
m\Delta t \leq L_{2} \tilde{L}_{c} \theta + \sqrt{(L_{2}\tilde{L}_{c})^{2} \theta^{2} + (\Delta t)^{2}} \leq 2 L_{2}\tilde{L}_{c} \theta + \Delta t,
\end{equation*}
and, thus, 
\begin{equation*}
|(t + \Delta t) - s \Delta t| \leq |(t + \Delta t) - s_{1} \Delta t| + m \Delta t \leq 2L_{2} \tilde{L}_{c} \theta + 2\Delta t.
\end{equation*}
Hence
\begin{equation*}
s \Delta t \geq (t + \Delta t) - 2L \theta - 2\Delta t \geq T_{\theta} - 2\Delta t > 0,
\end{equation*}
and, in particular, $s = s_{0} + 1$ for some $s_{0} \in S_{1}$.  

In case (ii), since $s \Delta t \geq t + \Delta t > 0$, we immediately deduce the existence of an $s_{0}$ as in the previous paragraph.  

Suppose $k \in J_{i} \setminus \{0\}$.  Since $s \Delta t = (s_{0} + 1) \Delta t$ for some $s_{0} \in S_{1}$, we find
\begin{equation*}
\frac{U(k,s_{0} + 1) - U(k,s_{0})}{\Delta t} + F_{i}(D^{+}U(k,s_{0}),D^{-}U(k,s_{0})) = f_{i}(s_{0}\Delta t, -k \Delta x).
\end{equation*}
In view of the inequalities
\begin{equation} \label{E: useful_ineq}
\left\{ \begin{array}{r l}
U^{\theta}(k,t + \Delta t) = U(k,s_{0} + 1) - \frac{(t - s_{0}\Delta t)^{2}}{2 \theta} \\
U^{\theta}(k',t) \geq U(k',s_{0}) - \frac{(t - s_{0}\Delta t)^{2}}{2 \theta} & \quad \text{if} \, \, k' \in J_{i}
\end{array} \right.
\end{equation}
monotonicity of the scheme, assumption \eqref{As: time_bound}, and the estimate $|t - s_{0} \Delta t| \leq 2L_{2} \tilde{L}_{c}\theta + 2 \Delta t$ obtained above, we find
\begin{equation*}
\frac{U^{\theta}(k,t + \Delta t) - U^{\theta}(k,t)}{\Delta t} + F_{i}(D^{+}U^{\theta}(k,t), D^{-}U^{\theta}(k,t)) \leq g_{i}(t,-k\Delta x).
\end{equation*}

On the other hand, if $k = 0$, then summing over $i$ in \eqref{E: useful_ineq} yields
\begin{equation*}
	\sum_{i = 1}^{K} \left(U^{\theta}(0,t + \Delta t) - U^{\theta}(1_{i},t + \Delta t)\right) \leq 0.
\end{equation*}
\end{proof}

As in the vanishing viscosity case, $U^{\theta}$ inherits regularity from $U$.  In the discrete spatial variable, it is straightforward to show that $\text{Lip}(U^{\theta}(\cdot,t)) \leq \tilde{L}_{c} \Delta x$ independent of $t \in [0,T + 2]$.  The time variable is more involved, and for this reason it is convenient to consider $U$ up to time $T + 2$.  Here is the result that we use:

\begin{prop} \label{P: num_scheme_lip}  Suppose $\Delta t \leq 1$, $2T_{\theta} \leq t \leq T + 1$, and $k \in \bigcup_{i = 1}^{K} J_{i}$.  If $p \in \partial^{-} U^{\theta}(k,t)$, then $|p| \leq L_{2}\tilde{L}_{c} + \frac{\Delta t}{2 \theta}$.  In particular, for such a $k$, if $t_{1},t_{2} \in [2T_{\theta},T + 1]$, then $|U^{\theta}(k,t) - U^{\theta}(k,s)| \leq (L_{2}\tilde{L}_{c} + \frac{\Delta t}{2 \theta})|t - s|$.   \end{prop} 

\begin{proof}  Observe that the inequalities \eqref{E: useful_ineq} provide a bound on the time derivatives of $U^{\theta}$.  Indeed, if $2T_{\theta} \leq t \leq T + 1$ and $p \in \partial^{-} U^{\theta}(k,t)$, then
\begin{equation*}
U^{\theta}(k,t + \Delta t) \geq U^{\theta}(k,t) + p \Delta t - \frac{(\Delta t)^{2}}{2 \theta}
\end{equation*}
and
\begin{equation*}
U^{\theta}(k, t - \Delta t) \geq U^{\theta}(k,t) - p \Delta t - \frac{(\Delta t)^{2}}{2 \theta}.
\end{equation*}
Since $2 T_{\theta} - \Delta t \leq t - \Delta t$, we can use \eqref{E: useful_ineq} to find 
\begin{equation*}
p \geq \frac{U^{\theta}(k,t) - U^{\theta}(k,t - \Delta t)}{\Delta t} - \frac{\Delta t}{2 \theta} \geq - \left(L + \frac{\Delta t}{2 \theta}\right).
\end{equation*}
Note that since $t \leq T + 1$ and $\Delta t \leq 1$, it follows that $t + \Delta t \in [0,T + 2]$, and, in particular, $U^{\theta}(k, t + \Delta t)$ is well-defined.  Thus, arguing as before, we find
\begin{equation*}
p \leq \frac{U^{\theta}(k, t + \Delta t) - U^{\theta}(k,t)}{\Delta t} + \frac{\Delta t}{2 \theta} \leq L_{2} \tilde{L}_{c} + \frac{\Delta t}{2 \theta}.
\end{equation*}

To see that $|U^{\theta}(k,t_{1}) - U^{\theta}(k,t_{2})| \leq (L_{2} \tilde{L}_{c} + \frac{\Delta t}{2 \theta})|t_{1}- t_{2}|$, we argue using the fact that $U^{\theta}(k,\cdot)$, as the sum of a convex function and a smooth one, is Lipschitz in $[2T_{\theta},T + 1]$, and $\partial^{-}U^{\theta}(k,t) = \{U_{t}^{\theta}(k,t)\}$ almost everywhere.   
\end{proof}

With Propositions \ref{P: numerical_sup_error}, \ref{P: sup_numerical}, and \ref{P: num_scheme_lip} in hand, we obtain an upper bound on $U - u$ arguing as in the vanishing viscosity case with only minor modifications needed to accommodate the fact that the space variable in $U^{\theta}$ is discrete.  To start with, we define 
	\begin{equation*}
	b = \frac{1}{2T}\sup \left\{ U(k,s) - u(-k\Delta x, s \Delta t) \, \mid \, (k,s) \in \mathcal{I} \times S\right\},
	\end{equation*}
we set $\theta = \epsilon^{\frac{2}{7}}$, and we let $f_{b} : \mathcal{J} \times [0,T + 1] \to \mathbb{R}$ be given by
	\begin{equation*}
		f_{b}(x,t) = U^{\theta}(k,t) - u_{\theta}(-k\Delta x, t) - bt - \sqrt{\theta}(T + 1 - t)^{-1}.
	\end{equation*}
As before, we study points where $f_{b}$ attains, or almost attains, its supremum, splitting the analysis into the same three cases as in Subsection \ref{S: ugly_proof}.

We argue in Case 1 exactly as before using Proposition \ref{P: numerical_sup_error} to quantify the difference between $U^{\theta}$ and $U$.

In Case 2, we once again reduce to stationary equations.  Here we are assuming that $f_{b}$ has a maximum at $(m_{0},t_{0})$ with $d(-m_{0}\Delta x,0) \leq \delta$ and $t_{0} > 2 T_{\theta} - \Delta t$, where once again $\delta = \frac{C_{0} \eta}{\nu}$ for some large enough $C_{0}$ to be determined in Case 3, $\eta = C_{1} \epsilon^{\frac{6}{7}}$, and $\nu = C_{2} \epsilon^{\frac{1}{7}}$ for some $C_{1},C_{2} > 0$ sufficiently small.  Applying Proposition \ref{P: sup_numerical} and adapting Proposition \ref{P: time_rate} to $U^{\theta}$, we find an $A > 0$ such that if $m \in J_{i} \setminus \{0\}$ satisfies $d(-m\Delta x,-m_{0}\Delta x) \leq A \theta$, then
\begin{align} \label{E: discrete_stationary_equation}
&U_{t}^{\theta}(m_{0},t_{0}) - 2 \sqrt{A} \left(\frac{d(-m\Delta x, -m_{0} \Delta x)}{\theta}\right)^{\frac{1}{2}} \nonumber\\
	&\qquad \qquad + F_{i}(D^{+}U^{\theta}(m,t_{0}),D^{-}U^{\theta}(m,t_{0})) \leq g_{i}(t, -m\Delta x) + \frac{\Delta t}{2 \theta}.
\end{align}
This is the discrete stationary equation satisfied by $m \mapsto U^{\theta}(m,t_{0})$.  Notice also that this equation is monotone in the sense of Definition \ref{D: monotone} by \eqref{As: CFL_cauchy}.  We use this equation together with the stationary equation solved by $u_{\theta}$ to carry out the same analysis as in Section \ref{S: ugly_proof}.  

We remark that in this case, we use the test function $\Phi_{i} : J_{i} \times \overline{I_{i}} \to \mathbb{R}$ given by 
	\begin{equation*}
		\Phi_{i}(k,y) = U^{\theta}(k,t_{0}) - u_{\theta}(y,t_{0}) - \frac{(-k\Delta x - y)^{2}}{2 \eta} - p_{i}(-k \Delta x - y) + \nu y,
	\end{equation*}
where $p_{i} = D^{+}U^{\theta}(1_{i},t_{0}) + \frac{\Delta x}{\eta}$.  Notice that the choice of $(p_{1},\dots,p_{K})$ forces the first component of any maximum point of $\Phi_{i}$ to be away from the junction, just as in the analysis of the time-independent scheme in Subsection \ref{S: error_analysis_num_stationary}. 

In Case 3, the arguments are as in the vanishing viscosity case with minor changes to accommodate the discrete spatial variable in $U^{\theta}$.  Again, the definition of the test function $\Psi_{j}$ from Subsection \ref{S: ugly_proof} is modified slightly.  In particular, we define $\Psi_{j} : (J_{j} \times [0,T]) \times (\overline{I_{j}} \times [0,T]) \to \mathbb{R}$ by
	\begin{align*}
		\Psi_{j}(k,t,y,s) &= U^{\theta}(k,t) - u_{\theta}(y,s) - \frac{(-k\Delta x - y)^{2}}{2 \eta} - \frac{(t - s)^{2}}{2 \eta} - bt \\ 
			&\quad- \sqrt{\theta}(T + 1 - t)^{-1} + g_{\eta,\nu}(-\nu y) - \alpha (k\Delta x)^{2}.
	\end{align*}
where $g_{\eta,\nu}$ is defined as before, but with $-m_{0}\Delta x$ replacing $x_{0}$.

As in the analysis of the finite-difference approximation of \eqref{E: stationary} there are discretization errors, but none of these effect the rate.  For example, a $\frac{\Delta t}{2 \theta}$ term appears as a discretization error in \eqref{E: discrete_stationary_equation}, but this is much smaller than $\epsilon^{\frac{1}{7}}$ as $\epsilon \to 0^{+}$.  

Finally, we remark that, as in the vanishing viscosity case, the arguments we previously described only work if $\epsilon$ is small enough.  For example, in Proposition \ref{P: num_scheme_lip}, we required that $\Delta t \leq 1$, and, in order to use \eqref{As: num_Ham_consistent}, we will need to impose another upper bound on $\epsilon$.  On the other hand, if $\epsilon$ is too large, we use a priori estimates to get the desired bounds.  Indeed, if $(m,s) \in \mathcal{J} \times S$, then 
	\begin{align*}
		U(m,s) - u(-m\Delta x, s \Delta t) &\leq (u_{0}(-m\Delta x) + \text{Lip}(u)T) - (u_{0}(-m\Delta x) \\
		&\quad \quad - L_{2}\tilde{L}_{c}T) \\
		&\leq (\text{Lip}(u) + L_{2} \tilde{L}_{c})T.
	\end{align*}
Thus, for a given $\epsilon_{0} > 0$, if $C$ is sufficently large, then we find $U(m,s) - u(-m\Delta x, s \Delta t) \leq (\text{Lip}(u) + L_{2} \tilde{L}_{c})T \leq C \epsilon^{\frac{1}{6}}$ for all $\epsilon \geq \epsilon_{0}$.  

Notice that in this case, the constant $C$ does not depend on an upper bound on $\epsilon$, unlike the vanishing viscosity case.  That is, there is no $K$ in Theorem \ref{T: rate_cauchy_diff}.  The reason for this is the Lipschitz bound $\tilde{L}_{c}$ does not depend on $\epsilon$, but instead on $L_{2}$.  However, this is not really an improvement.  In particular, by \eqref{As: CFL_cauchy}, if $\epsilon > 0$ is too large, then $S = \{0,1\}$ and then the scheme tells us very little.  Thus, as in the vanishing viscosity approximation of \eqref{E: time}, the rate of convergence only really makes sense when $\epsilon$ is small.

\begin{remark} \label{R: half_relaxed_time} As in the time-independent setting, the finite-difference scheme \eqref{E: cauchy_fd} can be used together with the method of half-relaxed limits to prove the existence of solutions of \eqref{E: time} when $u_{0} \in \text{Lip}(\mathcal{I})$.  If we let $U^{\epsilon}$ denote the solution of \eqref{E: cauchy_fd}, suppressing the dependence on $\Delta x$ and $\Delta t$ while assuming that the assumptions of Theorem \ref{T: rate_cauchy_diff} hold, we define $u^{*}$ and $u_{*}$ in $\mathcal{I} \times [0,T]$ by
	\begin{align*}
		u^{*}(x,t) = \limsup_{\delta \to 0^{+}} \sup \left\{ U^{\epsilon}(m,s) \, \mid \, d(-m\Delta x, x) + |t - s \Delta t| + \epsilon < \delta\right\}, \\
		u_{*}(x,t) = \liminf_{\delta \to 0^{+}} \inf \left\{ U^{\epsilon}(m,s) \, \mid \, d(-m\Delta x, x) + |t - s \Delta t| + \epsilon < \delta\right\}. 
	\end{align*} 
In view of the uniform estimates on $\text{Lip}(U^{\epsilon})$, one can prove that $u^{*},u_{*} \in \text{Lip}(\mathcal{I} \times [0,T])$.  Moreover, $u^{*}$ is a sub-solution of \eqref{E: time}, while $u_{*}$ is a super-solution.  Therefore, by comparison, $u^{*} \leq u_{*}$.  Since the definition implies $u_{*} \leq u^{*}$, we conclude that $u^{*} = u_{*}$.  Letting $u = u^{*} = u_{*}$, we see that $u$ is a uniformly continuous solution of \eqref{E: time}, and a statement similar to \eqref{E: local_convergence} also holds.  

Now that we have proved that \eqref{E: time} has a solution when $u_{0} \in \text{Lip}(\mathcal{I})$, the general case when $u_{0} \in UC(\mathcal{I})$ can be recovered by approximation as in the proof of Theorem \ref{T: existence_time} below.  \end{remark}     



\appendix


\section{Reformulated Kirchoff Condition} \label{A: reformulation}

We present an equivalent definition of viscosity solutions of Kirchoff problems for the general problem:
\begin{equation} \label{K eq}
	\left\{
		\begin{array}{r l}
			F_{i}(x,u,u_{x_{i}}) = 0 & \text{in} \, \, I_{i} \\
			\sum_{i = 1}^{K} u_{x_{i}} = B & \text{on} \, \, \{0\} 
		\end{array}
	\right.
\end{equation}
where $F_{i} : \overline{I_{i}} \times \mathbb{R} \times \mathbb{R} \to \mathbb{R}$ is continuous for each $i$.  We start with sub-solutions:

\begin{prop} \label{P: alternative}  $u \in USC(\mathcal{I})$ is a sub-solution of \eqref{K eq} if and only if for each $\varphi \in C^{2}(\mathcal{I})$ and any local maximum $x_{0}$ of $u - \varphi$, the following inequalities are satisfied:
\begin{equation*}
\left\{ \begin{array}{r l}
		F_{i}(x_{0},u(x_{0}),\varphi_{x_{i}}(x_{0})) \leq 0 & \text{if} \, \, x_{0} \in I_{i} \\
		\min_{i, \tilde{\theta} \in [0,1]} F_{i}\left(0,u(0),\varphi_{x_{i}}(0) + \tilde{\theta} \left( \sum_{j = 1}^{K} \varphi_{x_{i}}(0) - B\right)^{-}\right) \leq 0 & \text{if} \, \, x_{0} = 0
		\end{array} \right.
\end{equation*}
\end{prop}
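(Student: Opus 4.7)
The plan is to prove the equivalence in both directions.  The $(\Leftarrow)$ direction, that the reformulated condition implies the standard sub-solution definition, is immediate by case analysis on the sign of $S = \sum_{j = 1}^{K} \varphi_{x_{j}}(0)$.  If $S \geq 0$, then $S^{-} = 0$, so the reformulated inequality at $\theta = 0$ yields $F_{i}(0, u(0), \varphi_{x_{i}}(0)) \leq 0$ for some $i$, which bounds the second argument of the min in the standard definition.  If $S < 0$, the first argument $S$ of that min is already negative, so the condition is automatic.  The interior case $x_{0} \in I_{i}$ is identical in both formulations.

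The $(\Rightarrow)$ direction requires work only at $x_{0} = 0$.  Fix $\varphi \in C^{2}_{*}(\cup \overline{I_{i}})$ with $u - \varphi$ attaining a local max at $0$ and write $p_{j} = \varphi_{x_{j}}(0)$, $S = \sum_{j} p_{j}$.  If $S > 0$, the standard sub-solution condition applied to $\varphi$ already gives $\min_{j} F_{j}(0, u(0), p_{j}) \leq 0$, which is the reformulation at $\theta = 0$.  When $S \leq 0$, the standard condition at $\varphi$ is vacuous, so I apply it instead to perturbed test functions.  For each small $\eta > 0$, I build $\varphi_{\eta}$ by adding to $\varphi$ a smooth strict-maximum penalization $\eta g(x)$, where $g \in C^{2}_{*}$ satisfies $g(0) = g_{x_{j}}(0) = 0$ for all $j$ and $g > 0$ elsewhere, together with cutoff-localized corrections that shift the gradient at $0^{-}$ on edge $j$ upward by a prescribed amount $a_{j}(\eta) \geq 0$ with $a_{j}(\eta) \leq S^{-}$ and $\sum_{j} a_{j}(\eta)$ slightly above $S^{-}$.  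The quadratic term $\eta g$ guarantees that, for sufficiently small $\eta$, the perturbed function $u - \varphi_{\eta}$ retains its local max at $0$ in spite of the linear corrections.  The standard sub-solution condition applied to $\varphi_{\eta}$ then produces, from the positivity of $\sum_{j}(p_{j} + a_{j}(\eta))$, an index $j^{*} = j^{*}(\eta)$ with $F_{j^{*}}(0, u(0), p_{j^{*}} + a_{j^{*}}(\eta)) \leq 0$; extracting a subsequence along which $j^{*}$ is constant and $a_{j^{*}}(\eta)/S^{-} \to \theta_{\infty} \in [0, 1]$, the continuity of $F_{j^{*}}$ yields the desired reformulation inequality.

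The main obstacle is ensuring simultaneously that the shifts $a_{j}(\eta)$ are large enough to produce a positive sum of perturbed gradients while remaining small enough that the max of $u - \varphi_{\eta}$ stays at $0$.  The admissible shift size on edge $j$ is controlled by how strictly the max of $u - \varphi - \eta g$ is attained at $0$, which in turn depends on $\eta$ and on the slack between $p_{j}$ and the extremal one-sided sub-gradient $p_{j}^{*} = \liminf_{x \to 0^{-}} (u(x) - u(0))/x$.  In generic situations the total slack $\sum_{j}(p_{j}^{*} - p_{j})$ is at least $S^{-}$, which is precisely what permits a feasible distribution of the $a_{j}(\eta)$; in degenerate cases one falls back on the interior sub-solution condition.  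By standard viscosity arguments, Lipschitz sub-solutions satisfy $u(0) + F_{i}(0, u(0), p) \leq 0$ for some $p$ in the one-sided sub-differential at $0$ on each edge $i$, and this additional information, combined with the Kirchoff coupling across edges, closes the argument.
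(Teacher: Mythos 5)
Your $(\Leftarrow)$ direction is correct and matches what the paper dismisses as immediate.  The $(\Rightarrow)$ direction, however, contains two genuine gaps that undermine the argument precisely where the proposition is nontrivial.

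First, the construction $\varphi_{\eta} = \varphi + \eta g + (\text{linear corrections of size } a_{j}(\eta))$ cannot work as described.  Since $g(0) = g_{x_{j}}(0) = 0$, near the junction $g(x) = o(|x|)$; a first-order correction $a_{j}(\eta)\,x$ with $a_{j}(\eta)$ bounded away from zero (as you require, since $\sum_{j} a_{j}(\eta)$ is supposed to exceed the fixed quantity $S^{-} > 0$) dominates $\eta g(x)$ no matter how small $\eta$ is.  Whether the gradient on edge $j$ can be raised by $a$ while keeping the maximum of $u - \varphi_{\eta}$ at $0$ is a first-order question governed entirely by whether $\xi_{j} + a \in J^{+}_{j}u(0)$, i.e.\ by the slack $\lambda_{j,0}$; the quadratic penalizer is simply irrelevant to it.  Thus your claim that ``for sufficiently small $\eta$, the perturbed function retains its local max at $0$'' fails whenever the total slack $\sum_{j} \lambda_{j,0}$ is smaller than $S^{-}$ — and, contrary to what you assert, this is not a non-generic situation: if the original test function already has $\xi_{j} = \liminf_{x \to 0^{-}}(u(x)-u(0))/x$ on every edge, the slack is zero on every edge.

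Second, and more fundamentally, you have not supplied the key lemma that drives the paper's proof (Lemma~\ref{L: slopes}).  Its content is that the interior sub-solution inequality on $I_{i}$, applied at suitable interior maxima, forces $F_{i}(0, u(0), \xi_{i} + \lambda_{i,0}) \leq 0$ at the \emph{extremal} one-sided sub-gradient $\xi_{i} + \lambda_{i,0}$, whenever $\lambda_{i,0} < \infty$.  With this in hand, the degenerate case $\lambda_{j,0} \leq S^{-}$ for all $j$ is handled by writing $\lambda_{j,0} = \theta_{j} S^{-}$ with $\theta_{j} \in [0,1]$, so the reformulated inequality at $\theta = \theta_{j}$ follows directly, independently of any test-function perturbation.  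Your concluding sentence gestures at this (``sub-solutions satisfy $F_{i}(0,u(0),p) \leq 0$ for some $p$ in the one-sided sub-differential''), but it is both imprecise (the inequality is needed at the specific extremal $p$, not ``some'' $p$), incorrectly hypothesized (the proposition does not assume $u$ Lipschitz, and Lemma~\ref{L: slopes} does not need it), and unproven.  The Kirchoff coupling does not ``close the argument'' by itself; the whole point is that in the degenerate case one cannot push the flux positive, and the inequality must instead come from the one-edge Lemma~\ref{L: slopes} applied edge by edge.  Until you state and prove that lemma (or cite it) and replace the $\eta g$ device with a direct appeal to sub-jets, the proof is not complete.
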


In fact, the proposition only requires $\varphi \in C^{1}(\mathcal{I})$, but we will not expand on that here.

Before proceeding, we need to recall the definitions of first-order differential sub-jets and super-jets.  Given an upper semi-continuous function $u : \mathcal{I} \to \mathbb{R}$ and an $x \in \overline{I_{i}}$, we say that $p \in J^{+}_{i}u(x)$ if and only if 
\begin{equation*}
u(y) \leq u(x) + p(y - x) + o(|y - x|) \quad \text{if} \, \, y \in \overline{I_{i}},
\end{equation*}
where $\lim_{y \to x} \frac{o(|y - x|)}{|y - x|} = 0$.  Similarly, given a lower semi-continuous function $v : \mathcal{I} \to \mathbb{R}$ and an $x \in \overline{I_{i}}$, we say that $q \in J^{-}_{i}v(x)$ if and only if 
\begin{equation*}
v(y) \geq v(x) + q(y - x) + o(|y-x|) \quad \text{if} \, \, y \in \overline{I_{i}}.
\end{equation*}

Note that there is a $\varphi \in C^{2}(\mathcal{I})$ such that $u - \varphi$ has a local maximum at $0$ if and only if $\xi_{i} := \varphi_{x_{i}}(0) \in J_{i}^{+}u(0)$ for each $i$.  Therefore, in what follows, we often work with $K$-tuples $(\xi_{1},\dots,\xi_{K})$ instead of test functions.

As in the Neumann problem, Proposition \ref{P: alternative} rests on the next lemma.    

\begin{lemma} \label{L: slopes}  Fix $i \in \{1,\dots,K\}$ and assume that $u$ is an upper semi-continuous sub-solution of
\begin{equation*}
F_{i}(x,u,u_{x_{i}}) = 0 \quad \text{in} \, \, I_{i}.
\end{equation*}
Let $\xi_{i} \in J_{i}^{+}u(0)$ and set $\lambda_{i,0} = \sup\{\lambda \geq 0 \, \mid \, \xi_{i} + \lambda \in J^{+}_{i}u(0) \}$.  If $\lambda_{i,0} < \infty$, then 
\begin{equation*}
F_{i}(0,u(0),\xi_{i} + \lambda_{i,0}) \leq 0.
\end{equation*}
\end{lemma}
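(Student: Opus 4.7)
The plan is to exploit the maximality of $\lambda_{i,0}$ by building, via a penalized test-function argument, a sequence of interior points of $I_{i}$ converging to $0$ at which the subsolution inequality holds with slope approaching $\bar\xi := \xi_{i} + \lambda_{i,0}$; passing to the limit then yields the desired inequality at $0$.

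First I would record two preliminaries. (i) The set $J_{i}^{+}u(0)$ is closed in $\mathbb{R}$ (a routine $\tfrac{\varepsilon}{2}$-argument on the defining inequality), so the supremum defining $\lambda_{i,0}$ is attained and $\bar\xi \in J_{i}^{+}u(0)$; equivalently, $u(y) \leq u(0) + \bar\xi y + \omega(|y|)|y|$ for $y \in \overline{I_{i}}$ near $0$, with $\omega(r) \to 0^{+}$ as $r \to 0^{+}$. (ii) By maximality, $\bar\xi + \varepsilon \notin J_{i}^{+}u(0)$ for every $\varepsilon > 0$; negating the defining inequality produces $\eta_{\varepsilon} > 0$ and a sequence $y_{n}^{(\varepsilon)} \to 0^{-}$ with
\begin{equation*}
u\bigl(y_{n}^{(\varepsilon)}\bigr) > u(0) + (\bar\xi + \varepsilon)\, y_{n}^{(\varepsilon)} + \eta_{\varepsilon}\, |y_{n}^{(\varepsilon)}|.
\end{equation*}

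Next, for small $\varepsilon > 0$ and a large parameter $\beta > 0$, consider the test function $\varphi_{\varepsilon,\beta}(y) := (\bar\xi + \varepsilon) y + \beta y^{2}$ on a fixed compact interval $[-R,0]$ on which $u$ is bounded above (by USC). For $\beta$ large one has $(u - \varphi_{\varepsilon,\beta})(-R) < u(0)$, and upper semicontinuity produces a maximum $y^{*} = y^{*}_{\varepsilon,\beta} \in [-R,0]$. Choosing $n$ with $\beta |y_{n}^{(\varepsilon)}| < \eta_{\varepsilon}$ in (ii) yields
\begin{equation*}
(u - \varphi_{\varepsilon,\beta})(y_{n}^{(\varepsilon)}) > u(0) = (u - \varphi_{\varepsilon,\beta})(0),
\end{equation*}
so $y^{*} \neq 0$; combined with the previous bound, $y^{*} \in (-R, 0) \subset I_{i}$, and the subsolution hypothesis gives
\begin{equation*}
F_{i}\bigl(y^{*},\, u(y^{*}),\, (\bar\xi + \varepsilon) + 2 \beta y^{*}\bigr) \leq 0.
\end{equation*}

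The remaining and essential step is the limit $\varepsilon \to 0^{+}$, $\beta \to \infty$. Combining $u(y^{*}) \geq u(0) + (\bar\xi + \varepsilon) y^{*} + \beta (y^{*})^{2}$ (from $(u - \varphi_{\varepsilon,\beta})(y^{*}) \geq u(0)$) with the super-jet bound in (i) delivers the bootstrap
\begin{equation*}
\beta |y^{*}| \leq \omega(|y^{*}|) + \varepsilon,
\end{equation*}
which—after appropriately coordinated limits—forces both $y^{*} \to 0$ and $\beta y^{*} \to 0$. Together with $u(y^{*}) \to u(0)$ (by USC and the matching lower bound) and the slope $(\bar\xi + \varepsilon) + 2\beta y^{*} \to \bar\xi$, continuity of $F_{i}$ concludes $F_{i}(0, u(0), \bar\xi) \leq 0$. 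The principal obstacle is precisely the control of this slope perturbation: the penalty $\beta y^{2}$ is needed both to rule out $y^{*} = 0$ and to prevent $y^{*} = -R$, but its derivative $2\beta y^{*}$ enters the subsolution inequality, and it is the super-jet estimate at the maximal slope $\bar\xi$ (not at $\bar\xi + \varepsilon$) that furnishes the decay $\beta |y^{*}| = o(1)$ making the perturbation vanish.
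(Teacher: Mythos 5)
Your proof is correct and follows essentially the same strategy as the paper: exploit the maximality of $\lambda_{i,0}$ to build a test function whose maximum is forced into the interior $I_{i}$ (the paper perturbs a touching function by a small linear term $\alpha(\delta)x$, you perturb the slope to $\bar\xi + \varepsilon$ and add a quadratic penalty $\beta y^{2}$), apply the interior sub-solution inequality there, and send the perturbation to zero. The only substantive difference is how the slope error at the interior maximum is controlled—the paper's carefully calibrated $\alpha(\delta)=\min\{\delta,\mu(\delta)/(2\delta)\}$ versus your bootstrap $\beta|y^{*}|\le\omega(|y^{*}|)+\varepsilon$—and both yield the same conclusion; you should, however, spell out the iterated/coupled limit $\beta\to\infty$, $\varepsilon\to0^{+}$ (a subsequence extraction is needed for $\beta y^{*}$) rather than leave it as "appropriately coordinated."
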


\begin{proof}  The proof is the same as the one in \cite[Lemma 3]{neumann}.  For the sake of completeness, we reproduce it here.  Since $J_{i}^{+}u(0)$ is closed, we can pick $\psi \in C^{2}(\overline{I_{i}})$ such that $u - \psi$ has a strict local maximum at $0$ in $\overline{I_{i}}$, $u(0) = \psi(0)$, and $\psi_{x_{i}}(0) = \xi_{i} + \lambda_{i,0}$.  Fix $\delta >0$, set $\mu(\delta) = \psi(-\delta_{i}) - u(-\delta_{i})$ and $\alpha(\delta) = \min\{\delta,\frac{\mu(\delta)}{2 \delta}\}$, and let $x_{\delta}$ be a maximum of $u - \psi - \alpha(\delta)x$ in $\overline{I^{\delta}_{i}} \subseteq \overline{I_{i}}$.  

Observe that $x_{\delta} \neq 0$.  Indeed, if $x_{\delta} = 0$, then 
$$\xi_{i} + \lambda_{i,0} + \alpha(\delta) =\psi_{x_{i}}(0) + \alpha(\delta) \in J^{+}u_{i}(0),$$
contradicting the definition of $\lambda_{i,0}$.  

Additionally, $x_{\delta} \neq -\delta_{i}$ since
\begin{align*}
u(-\delta_{i}) - \psi(-\delta_{i}) + \alpha(\delta)\delta &\leq \frac{u(-\delta_{i}) - \psi(-\delta_{i})}{2} \\
	&< 0 = u(0) - \psi(0) < u(x_{\delta}) - \psi(x_{\delta}) - \alpha(\delta) x_{\delta}.
\end{align*}
Thus, $x_{\delta} \in (0,\delta)$ and the sub-solution property of $u$ gives
\begin{equation*}
F_{i}(x_{\delta},u(x_{\delta}),\psi_{x_{i}}(x_{\delta}) + \alpha(\delta)) \leq 0.
\end{equation*}
From the inequality $u(0) - \psi(0) < u(x_{\delta}) - \psi(x_{\delta}) - \alpha(\delta) x_{\delta}$, the upper semi-continuity of $u$ implies $u(x_{\delta}) \to u(0)$.  Therefore, observing that 
	\begin{equation*}
		\lim_{\delta \to 0^{+}} (x_{\delta},\alpha(\delta),u(x_{\delta})) = (0,0,u(0)),
	\end{equation*}
the result follows.  
\end{proof}  

In the proof of Proposition \ref{P: alternative} we will use the following fact about sub-solutions of \eqref{K eq}.  In fact, the corresponding property of the Neumann problem is actually embedded in the definition in \cite{neumann}. 

\begin{prop} \label{P: helpful_neumann_idea} Suppose $u$ is a sub-solution of \eqref{K eq}, $\varphi \in C^{2}(\mathcal{I})$, and $u - \varphi$ has a local maximum at $0$.  If $\sum_{i = 1}^{K} \varphi_{x_{i}}(0) \geq B$, then $\min_{i} F_{i}(0,u(0),\varphi_{x_{i}}(0)) \leq 0$.\end{prop}  

\begin{proof}  By the definition of sub-solution, it suffices to consider the case when $\sum_{i = 1}^{K} \varphi_{x_{i}}(0) = B$.  Define $(\xi_{1},\dots,\xi_{K})$ by $\xi_{i} = \varphi_{x_{i}}(0)$ and let $(\lambda_{1,0},\dots,\lambda_{K,0})$ be defined as in Lemma \ref{L: slopes}.  If $\lambda_{j,0} = 0$ for some $j$, then Lemma \ref{L: slopes} implies $F_{j}(0,u(0),\xi_{j}) \leq 0$.  On the other hand, if $\min_{i}\lambda_{i,0} > 0$, then, for small enough $\delta > 0$, $\xi_{i} + \delta \in J_{i}^{+}u(0)$ holds, no matter the choice of $i$.  From $\sum_{i = 1}^{K} (\xi_{i} + \delta) = B + \delta K$ and the sub-solution property, we find $\min_{i}F_{i}(0,u(0),\xi_{i} + \delta) \leq 0$.  We conclude by sending $\delta \to 0^{+}$.     \end{proof}

We continue with the

\begin{proof}[Proof of Proposition \ref{P: alternative}]  Since one direction is immediate, here we prove only the ``only if" statement.  

Suppose $(\xi_{1},\dots,\xi_{K})$ is a $K$-tuple satisfying $\xi_{i} \in J^{+}_{i}u(0)$ for each $i$.   

In what follows, we use the notation in the statement of Lemma \ref{L: slopes}.  If there is a $j \in \{1,2,\dots,K\}$ such that $\left(\sum_{i = 1}^{K} \xi_{i} -B\right)^{-} < \lambda_{j,0}$,
then let $\tilde{\xi}_{k} = \xi_{k}$ if $k \neq j$ and $\tilde{\xi}_{j} = \xi_{j} + \left(\sum_{i = 1}^{K} \xi_{i} -B\right)^{-}$.  For each $i$, $\tilde{\xi}_{i} \in J_{i}^{+} u(0)$ and
\begin{equation} \label{E: flux}
\sum_{i = 1}^{K} \tilde{\xi}_{i} = \left(\sum_{i = 1}^{K} \xi_{i}\right) + \left(\sum_{j = 1}^{K} \xi_{j} - B\right)^{-} \geq B.
\end{equation}
Thus, Proposition \ref{P: helpful_neumann_idea} implies $\min_{i} F_{i} \left(0,u(0), \tilde{\xi}_{i} \right) \leq 0$.
From this and the definition of $(\tilde{\xi}_{1},\dots,\tilde{\xi}_{K})$, we conclude
\begin{equation*}
\min_{j} \min_{\tilde{\theta} \in [0,1]} F_{j}\left(0,u(0), \xi_{j} + \tilde{\theta} \left(\sum_{i = 1}^{K} \xi_{i} -B\right)^{-} \right) \leq 0.
\end{equation*}

It only remains to consider the case when $\lambda_{j,0} \leq \left(\sum_{i = 1}^{K} \xi_{i}-B\right)^{-}$ independently of the choice of $j$.  In this case, we can fix $(\tilde{\theta}_{1},\tilde{\theta}_{2},\dots,\tilde{\theta}_{K}) \in [0,1]^{K}$ such that $\lambda_{j,0} = \tilde{\theta}_{j} \left(\sum_{i = 1}^{K} \xi_{i} - B\right)^{-}$.  Therefore, Lemma \ref{L: slopes} yields that, for each $j$,
\begin{equation*}
 \min_{\tilde{\theta} \in [0,1]} F_{j} \left(0,u(0), \xi_{j} + \tilde{\theta} \left(\sum_{i = 1}^{K} \xi_{i} - B\right)^{-} \right) \leq F_{j}(0,u(0),\xi_{j} + \lambda_{j,0}) \leq 0.
\end{equation*}\end{proof}

The result for super-solutions is stated next.  Since the proof is so similar, we omit the details.

\begin{prop} \label{P: neumann_sup}  A function $v \in \text{LSC}(\mathcal{I})$ is a viscosity super-solution of \eqref{K eq} if and only if for each $\varphi \in C^{2}(\mathcal{I})$ and any local minimum $x_{0}$ of $u - \varphi$, the following inequalities are satisfied:
\begin{equation*}
\left\{ \begin{array}{r l}
		F_{i}(x_{0},v(x_{0}),\varphi_{x_{i}}(x_{0})) \geq 0 & \text{if} \, \, x_{0} \in I_{i}\\
		\max_{i, \tilde{\theta} \in [0,1]} F_{i}\left(0,v(0),\varphi_{x_{i}}(0) - \tilde{\theta} \left( \sum_{j = 1}^{K} \varphi_{x_{i}}(0) -B \right)^{+}\right) \geq 0 & \text{if} \, \, x_{0} = 0
		\end{array} \right.
\end{equation*}
\end{prop}

There is an analogous reformulation of time-dependent equations like \eqref{E: time}.  We do not prove it here since we have no immediate use for it and it does not simplify the uniqueness proof presented in Section \ref{S: time_comparison}.


\section{Dimensionality Reduction Lemma}  \label{A: dimensionality_reduction}

In this section, we show how to obtain time-independent equations from those in which time-derivatives do not appear.  The following result implies Proposition \ref{P: time_freezing}:

\begin{lemma} \label{L: critical}  Assume that, for each $i$, $F_{i} : [0,T] \times \mathcal{I} \times \mathbb{R}^{2} \to \mathbb{R}$ is a continuous function, and fix $B \in \mathbb{R}$ and $\delta > 0$.  Let the upper semi-continuous function $u : \bigcup_{i = 1}^{K} \overline{I_{i}^{\delta}} \times [0,T] \to \mathbb{R}$ be a sub-solution of
\begin{equation} \label{E: junction}
\left\{ \begin{array}{r l}
			F_{i}(t,x,u,u_{x_{i}}) = 0 & \text{in} \, \, I_{i}^{\delta} \times (0,T) \\
			\sum_{i = 1}^{K} u_{x_{i}} = B & \text{on} \, \, \{0\} \times (0,T)
			\end{array} \right.
\end{equation}
For each $t_{0} \in (0,T]$, the function $u(\cdot,t_{0}) : \bigcup_{i = 1}^{K} \overline{I_{i}^{\delta}} \to \mathbb{R}$ is a sub-solution of
\begin{equation*}
\left\{ \begin{array}{r l}
			F_{i}(t_{0},x,u(\cdot,t_{0}),u_{x_{i}}(\cdot,t_{0})) = 0 & \text{in} \, \, I_{i}^{\delta} \\
			\sum_{i = 1}^{K} u_{x_{i}}(\cdot,t_{0}) = B & \text{on} \, \, \{0\}
			\end{array} \right.
\end{equation*} \end{lemma}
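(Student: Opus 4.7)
The plan is to reduce \eqref{E: junction} to a stationary system at the fixed time $t_0$ by penalizing the time variable, exploiting the crucial feature that no time derivatives appear in \eqref{E: junction}. Fix $t_0 \in (0,T]$ and suppose $\psi \in C^1_*(\cup \overline{I_i})$ is such that $u(\cdot, t_0) - \psi$ has a local maximum at some $x_0 \in \bigcup_i \overline{I_i}$. By replacing $\psi$ with $\psi(x) + d(x, x_0)^2$, which remains in $C^1_*(\cup \overline{I_i})$, we may assume the maximum is strict on a closed network-neighborhood $N_r = \{x : d(x, x_0) \leq r\}$ of $x_0$.

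For each small $\sigma > 0$, set $\varphi_\sigma(x, t) = \psi(x) + \sigma^{-1}(t - t_0)^2$; this is a valid test function in $C^{1,1}_*(\cup \overline{I_i} \times [0,T])$. Let $(x_\sigma, t_\sigma)$ maximize $u - \varphi_\sigma$ on the compact set $N_r \times [(t_0 - \delta) \vee 0, (t_0 + \delta) \wedge T]$ for a sufficiently small $\delta > 0$; upper-semicontinuity of $u$ guarantees existence. A standard viscosity-solution argument, comparing $(x_\sigma, t_\sigma)$ with $(x_0, t_0)$ and using boundedness of $u$, shows $(t_\sigma - t_0)^2 / \sigma \to 0$, hence $t_\sigma \to t_0$. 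Strictness of the spatial maximum together with the upper-semicontinuity of $u$ then force both $x_\sigma \to x_0$ and $u(x_\sigma, t_\sigma) \to u(x_0, t_0)$ as $\sigma \to 0^+$.

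Once $\sigma$ is small, $t_\sigma > 0$, so the sub-solution property of $u$ for \eqref{E: junction} applies at $(x_\sigma, t_\sigma)$ with test function $\varphi_\sigma$. The key observation is that although $\partial_t \varphi_\sigma(x_\sigma, t_\sigma) = 2 \sigma^{-1}(t_\sigma - t_0)$ may be unbounded, this derivative does not appear anywhere in \eqref{E: junction}. We therefore obtain $F_i(x_\sigma, t_\sigma, u(x_\sigma, t_\sigma), \psi_{x_i}(x_\sigma)) \leq 0$ when $x_\sigma \in I_i$, and the junction inequality
\[
\min \left\{ \min_j F_j \bigl(0, t_\sigma, u(0, t_\sigma), \psi_{x_j}(0^-)\bigr), \; \sum_{j=1}^K \psi_{x_j}(0^-) \right\} \leq 0
\]
when $x_\sigma = 0$. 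Continuity of each $F_i$ and of the derivatives $\psi_{x_j}$ up to the junction allow us to let $\sigma \to 0^+$ and recover the desired stationary sub-solution inequality at $x_0$, with test function $\psi$.

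The delicate point is the junction case $x_0 = 0$: the maximizer $x_\sigma$ may sit either at $0$ or in the interior of some edge $I_i$. After extracting a subsequence, either $x_\sigma \equiv 0$, in which case the junction inequality passes directly to the limit, or $x_\sigma$ lies in a single edge $I_i$, in which case the interior inequality gives $F_i(0, t_0, u(0, t_0), \psi_{x_i}(0^-)) \leq 0$ in the limit, which is a fortiori stronger than the required minimum inequality. Either alternative yields the conclusion, completing the proof of the sub-solution claim; the symmetric argument (with $\varphi_\sigma(x,t) = \psi(x) - \sigma^{-1}(t-t_0)^2$) applies if the reformulation for super-solutions is ever needed.
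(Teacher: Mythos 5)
Your proof is correct and follows essentially the same approach as the paper: penalize the time variable with a quadratic term $\sigma^{-1}(t - t_0)^2$, use the fact that no time derivatives appear in \eqref{E: junction} so the possibly unbounded $\partial_t \varphi_\sigma$ is harmless, establish $x_\sigma \to x_0$ and $t_\sigma \to t_0$ via strictness, split into the cases $x_\sigma \in I_j$ versus $x_\sigma = 0$, and pass to the limit using continuity of the $F_i$. The only cosmetic difference is that you localize to a compact neighborhood $N_r$ and strictify the maximum explicitly, whereas the paper assumes a strict global maximum at the outset; both are standard reductions.
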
  

We remark that a version of Lemma \ref{L: critical} for super-solutions follows from it by replacing $u$ by $-u$.

\begin{proof}  Fix a $t_{0} \in (0,T]$.  Given $\varphi \in C^{2}(\mathcal{I})$, suppose $u(\cdot,t_{0})- \varphi$ has a strict global maximum in $\bigcup_{i = 1}^{K} \overline{I_{i}^{\delta}}$ at $x_{0} \in \bigcup_{i = 1}^{K} I_{i}^{\delta}$.  We consider only the case when $x_{0} = 0$, the other case being slightly easier.

For each $\epsilon > 0$, let $\Phi_{\epsilon} : \bigcup_{i = 1}^{K} \overline{I_{i}^{\delta}} \times [0,T] \to \mathbb{R}$ be the function given by 
\begin{equation*}
\Phi_{\epsilon}(x,t) = u(x,t) - \varphi(x) - \frac{(t- t_{0})^{2}}{2 \epsilon}.
\end{equation*}
Write $\Phi_{\epsilon}(x,t) = u(x,t) - \Psi_{\epsilon}(x,t)$ and note that $\Psi_{\epsilon} \in C^{2,1}(\mathcal{I} \times [0,T])$.  Let $(x_{\epsilon}, t_{\epsilon})$ denote a maximum point of $\Phi_{\epsilon}$ its domain.  Since $0$ is a strict global maximum of $u(\cdot,t_{0}) - \varphi$, it follows that $t_{\epsilon} \to t_{0}$, $x_{\epsilon} \to 0$, and $u(x_{\epsilon},t_{\epsilon}) \to u(0,t_{0})$ as $\epsilon \to 0^{+}$.  Fix $\epsilon_{1} > 0$ such that $t_{\epsilon} > 0$ if $\epsilon \in (0,\epsilon_{1})$.  If there is a sequence $\epsilon_{n} \to 0$ such that $x_{\epsilon_{n}} \in I_{j}$ for some $j$ and each $n$, then we immediately obtain
\begin{equation*}
F_{j}(t_{\epsilon_{n}},x_{\epsilon_{n}},u(x_{\epsilon_{j}},t_{\epsilon_{j}}),\varphi_{x_{j}}(x_{\epsilon_{n}})) = F_{j}(t_{\epsilon_{n}}, x_{\epsilon_{n}}, u(x_{\epsilon_{j}},t_{\epsilon_{j}}),\Psi_{x_{j}}(x_{\epsilon_{n}},t_{\epsilon_{n}})) \leq 0.
\end{equation*} 
Sending $n \to \infty$, we recover $F_{j}(t_{0},0,u(x_{0},t_{0}),\varphi_{x_{j}}(0)) \leq 0$.  

It remains to consider the case when there is an $\epsilon_{2} > 0$ such that $x_{\epsilon} = 0$ for all $\epsilon \in (0,\epsilon_{2})$.  Fix such an $\epsilon$.  For each $j \in \{1,2,\dots,K\}$, the map $$(x,t) \mapsto u(x,t) - \varphi(x) - \frac{(t - t_{0})^{2}}{2 \epsilon}$$ defined in $\overline{I_{j}^{\delta}} \times [0,T]$ has a local maximum at $(0,t_{\epsilon})$.  Thus, 
\begin{align*}
\min \left\{ \min_{i} F_{i}(t_{\epsilon},0,u(0,t_{\epsilon}),\varphi_{x_{i}}(0)), \sum_{i = 1}^{K} \varphi_{x_{i}}(0) - B \right\} \leq 0.
\end{align*}
We conclude by sending $\epsilon \to 0^{+}$ and appealing to continuity of the functions $F_{1},\dots,F_{K}$.
\end{proof}

\section{Existence of Solutions of the Cauchy Problems} \label{A: cauchy_existence}

In this section, we prove the existence of solutions of \eqref{E: time} and \eqref{E: viscous_time}.  The main results proved herein follow:

\begin{theorem} \label{T: existence_time}  If $u_{0} \in \text{UC} \left(\mathcal{I} \right)$, then there is a $u \in \text{UC}(\mathcal{I} \times [0,T])$ solving \eqref{E: time}.  If, in addition, $u_{0} \in \text{Lip}(\mathcal{I})$, then $u \in \text{Lip}(\mathcal{I} \times [0,T])$, and $\text{Lip}(u)$ depends on $u_{0}$ only through $\text{Lip}(u_{0})$.  \end{theorem}

\begin{theorem}  \label{T: existence_time_viscous} Fix $\epsilon > 0$.  If $u_{0} \in \text{UC} \left( \mathcal{I} \right)$, then there is a $u^{\epsilon} \in \text{UC}(\mathcal{I} \times [0,T])$ solving \eqref{E: viscous_time}.  Moreover, if $[u_{0}]_{1} + [u_{0}]_{2} < \infty$, then there is a $C > 0$ depending only on $[u_{0}]_{1} + \epsilon[u_{0}]_{2}$ such that $\text{Lip}(u^{\epsilon}) \leq C$.     \end{theorem}  

We prove Theorem \ref{T: existence_time} by sending $\epsilon \to 0^{+}$ in Theorem \ref{T: existence_time_viscous}.  Therefore, the main thrust of this section is the proof of Theorem \ref{T: existence_time_viscous} and associated estimates.  

The proof of Theorem \ref{T: existence_time_viscous} is divided into three steps.  First, we use the estimate proved by von Below in \cite{von below} and Schaefer's fixed point theorem to obtain solutions of \eqref{E: viscous_time} when $u_{0}$ is a smooth function satisfying some compatibility conditions.  Next, we prove Lipschitz estimates when the initial data is sufficiently regular.  Finally, we approximate arbitrary initial data by smooth data and use the comparison principle to pass to the limit.    

Recall that in Remark \ref{R: half_relaxed_time} above we observed that an alternative proof of Theorem \ref{T: existence_time} can be obtained using the finite-difference scheme \eqref{E: cauchy_diff} and the method of half-relaxed limits.  

\subsection{Existence for Regular Data}  Here we obtain solutions of \eqref{E: viscous_time} using a priori H\"{o}lder estimates for linear parabolic equations on networks and Schaefer's fixed point theorem.  

To begin with, for a given $R > 0$, we let $\{\tilde{H}_{1}^{(R)},\dots,\tilde{H}_{K}^{(R)}\}$ take the form 
	\begin{equation*}
		\tilde{H}_{i}^{(R)}(t,x,p) = \psi^{(R)}(p) H_{i}(t,x,p) + (1 - \psi^{(R)}(p)) R,
	\end{equation*}
where $\psi^{(R)} : \mathbb{R} \to [0,1]$ is a smooth cut-off function satisfying $\psi^{(R)}(p) = 1$ if $|p| \leq \frac{R}{2}$ and $\psi^{(R)}(p) = 0$ if $|p| \geq R$.  Notice that $\{\tilde{H}_{1}^{(R)},\dots,\tilde{H}_{K}^{(R)}\}$ are bounded functions on their respective domains, and the assumptions \eqref{As: continuity} and \eqref{As: time_bound} continue to hold. 

The result is stated next:

\begin{prop} \label{P: regular_existence}  Suppose $a > 0$ and $u_{0} \in C^{3}(\mathcal{I})$ satisfies, for each $i \in \{1,2,\dots,K\}$,
	\begin{align}
		\epsilon u_{0,x_{i}x_{i}}(0) - H_{i}(0,0,u_{0,x_{i}}(0)) &= \epsilon u_{0,x_{1}x_{1}}(0) - H_{1}(0,0,u_{0,x_{1}}(0)) \label{E: compatibility_1} \\
		\sum_{i = 1}^{K} u_{0,x_{i}}(0) &= 0 \label{E: compatibility_2} \\
		[u_{0}]_{1} + [u_{0}]_{2} + [u_{0}]_{3} &< \infty
	\end{align}
Assume, in addition, that $R \geq 2 [u_{0}]_{1}$.  
Then there is a viscosity solution $u^{(a)}: \bigcup_{i = 1}^{K} \overline{I^{a}_{i}} \times [0,T] \to \mathbb{R}$ of the following equation:
	\begin{equation*}
		\left\{
			\begin{array}{r l}
				u^{(a)}_{t} - \epsilon u^{(a)}_{x_{i}x_{i}} + \tilde{H}^{(R)}_{i}(t,x,u^{(a)}_{x_{i}}) = 0 & \text{in} \, \, I_{i}^{a} \times (0,T) \\
				\sum_{i = 1}^{K} u^{(a)}_{x_{i}} = 0 & \text{on} \, \, \{0\} \times (0,T) \\
				u^{(a)} = u_{0} & \text{on} \, \, \bigcup_{i = 1}^{K} \overline{I_{i}^{a}} \times \{0\} \\
				u^{(a)} = \beta_{i} & \text{on} \, \, \{-a_{i}\} \times (0,T)
			\end{array}
		\right.
	\end{equation*}   
where the functions $\beta_{1},\dots,\beta_{K} : [0,T] \to \mathbb{R}$ are given by 
	\begin{equation} \label{E: compatibility_3}
		\beta_{i}(t) = u_{0}(-a_{i}) + \left(\epsilon u_{0,x_{i}x_{i}}(-a_{i}) - \tilde{H}^{(R)}_{i}(0,-a_{i},u_{0,x_{i}}(-a_{i}))\right)t.
	\end{equation}   
For each $i \in \{1,2,\dots,K\}$, the functions $u^{(a)}$, $u^{(a)}_{t}$, $u^{(a)}_{x_{i}}$, and $u^{(a)}_{x_{i}x_{i}}$ are H\"{o}lder continuous in $\overline{I_{i}^{a}} \times [0,T]$.  
\end{prop}   

A similar result has been obtained in \cite{achdou} starting with weak solutions in $L^{p}$ spaces.  

Our proof of Proposition \ref{P: regular_existence} follows the same general outline presented in \cite[Chapter 5]{ladyzenskaja}.  As in the fixed point arguments contained there, the next remark will play a significant role here.  For a proof, see, for example, \cite[Lemma 3.1]{ladyzenskaja}.

\begin{remark} \label{R: interpolation}  Suppose $I \subseteq \mathbb{R}$ is an open interval and $u : \overline{I} \times [0,T] \to \mathbb{R}$ is twice continuously differentiable in space and once continuously differentiable in time.  Then $u_{x}$ is $\frac{1}{2}$-H\"{o}lder continuous in time with a H\"{o}lder constant that only depends on $I$ and the suprema of $|u_{t}|$ and $|u_{xx}|$.    \end{remark}  

It will be convenient in what follows to use the semi-norms $[\cdot]_{\alpha}$ and $[\cdot]_{1 + \alpha}$ on functions with domain $\bigcup_{i = 1}^{K} \overline{I_{i}^{a}} \times [0,T]$, abusing the notation somewhat.  By this, we mean the semi-norms as defined in Subsection \ref{S: notation}, but with $\overline{I_{i}}$ replaced everywhere in the definitions with $\overline{I_{i}^{a}}$.  

\begin{proof}[Proof of Proposition \ref{P: regular_existence}]  First, for each $\alpha \in (0,1)$, define a norm on functions $v : \bigcup_{i = 1}^{K} \overline{I_{i}^{a}} \times [0,T] \to \mathbb{R}$ by 
	\begin{equation*}
		\|v\|_{\alpha} = [v]_{0} + [v]_{\alpha} + \max_{i} \, [v_{x_{i}}]_{i,0} + [v]_{1 + \alpha}.
	\end{equation*}
Let $V_{\alpha}$ be the Banach space of functions $v$ with $\|v\|_{\alpha} < \infty$.  We will find the solution as the fixed point of a certain operator on $V_{\alpha}$.  

Fix $\alpha \in (0,1)$.  We claim we can define a compact, continuous operator $T : V_{\alpha} \to V_{\alpha}$ so that $u = T(v)$ solves the equation
	\begin{equation} \label{E: approximate_equation}
            \left\{
            	\begin{array}{r l}
            		u_{t} - \epsilon u_{x_{i}x_{i}} + \tilde{H}^{(R)}_{i}(x,t,v_{x_{i}}) = 0 & \text{in} \, \, I_{i}^{a} \times (0,T) \\
            		\sum_{i = 1}^{K} u_{x_{i}} = 0 & \text{on} \, \, \{0\} \times (0,T) \\
            		u = u_{0} & \text{on} \, \, \bigcup_{i = 1}^{K} \overline{I_{i}^{a}} \times \{0\} \\
            		u = \beta_{i} & \text{on} \, \, \{-a\} \times (0,T)
            	\end{array}
            \right.
           \end{equation}
 Indeed, since $[v]_{1 + \alpha} < \infty$ and $\tilde{H}^{(R)}_{i}(0,0,u_{0,x_{i}}(0)) = H_{i}(0,0,u_{0,x_{i}}(0))$ for all $i$ by the choice of $R$, the compatibility conditions \eqref{E: compatibility_1}, \eqref{E: compatibility_2}, and \eqref{E: compatibility_3} imply the result of \cite{von below} is applicable.  In particular, a bounded solution $u$ of \eqref{E: approximate_equation} exists and the functions $u_{t},u_{x_{1}x_{1}},\dots,u_{x_{K}x_{K}}$ are bounded and continuous.  Thus, Remark \ref{R: interpolation} implies $[u]_{2} < \infty$, and $u \in V_{\alpha}$ follows.  Arguing as in Theorem \ref{T: viscous_comparison_time}, we see that $u$ is uniquely determined.  Therefore, $T$ is well-defined.
 
We claim that $T$ is compact and continuous.  Suppose $(v_{n})_{n \in \mathbb{N}} \subseteq V_{\alpha}$ and $\|v_{n}\|_{\alpha} \leq C$ independently of $n$.  Let $u^{(n)} = T(v_{n})$.  The main result of \cite{von below} implies $u^{(n)},u^{(n)}_{t},u^{(n)}_{x_{1}x_{1}},\dots,u^{(n)}_{x_{K}x_{K}}$ are bounded continuous functions with bounds depending on $(v_{n})_{n \in \mathbb{N}}$ only through the constant $C$.  Thus, Remark \ref{R: interpolation} implies $[u^{(n)}]_{1} + [u^{(n)}]_{2}$ is uniformly bounded.  Since $\alpha < 1$ and $\{u^{(n)}(\cdot,0)\}_{n \in \mathbb{N}} = \{u_{0}\}$, it follows that $(u^{(n)})_{n \in \mathbb{N}}$ is relatively compact in $V_{\alpha}$.  Therefore, by definition, $T$ is compact.  Since solutions of \eqref{E: approximate_equation} in $V_{\alpha}$ are unique, if, in addition, $v_{n} \to v$ in $V_{\alpha}$, then it is straightforward to check that $T(v)$ is the only possible subsequential limit point of $(u^{(n)})_{n \in \mathbb{N}}$.  In particular, $u^{(n)} \to T(v)$ in $V_{\alpha}$, which proves that $T$ is continuous.  

Finally, we check the hypotheses of Schaefer's fixed point theorem (cf.\ \cite[Section 9.2.2]{evans}).  Recall that we need to find a $C >0$ such that if $u \in V_{\alpha}$ satisfies $u= \sigma T(u)$ for some $\sigma \in [0,1]$, then $\|u\|_{\alpha} \leq C$.
Indeed, arguing as in Proposition \ref{P: linearized_argument} below, we see that $u_{t}$ is bounded independently of $\sigma$.  Since $\{\tilde{H}_{1}^{(R)},\dots,\tilde{H}_{K}^{(R)}\}$ are bounded functions, the equation implies $u_{x_{i}x_{i}}$ is also bounded independently of $\sigma$ and $i$.  From this, we obtain a bound on $[u]_{2}$ by Remark \ref{R: interpolation}.  Finally, the regularity of $u_{0}$ and the uniform bound on $[u]_{2}$ gives a bound on $\max_{i} \, [u_{x_{i}}]_{i,0} + [u]_{1 + \alpha}$, and this together with the uniform bound on $u_{t}$ provides one for $[u]_{0} + [u]_{\alpha}$.  It follows that $\|u\|_{\alpha}$ is bounded independently of $\sigma$.  

By Schaefer's theorem, we conclude there is a $u^{(a)} \in V_{\alpha}$ such that $T(u^{(a)}) = u^{(a)}$.  The regularity of $u^{(a)}$ and its derivatives follows directly from the result of \cite{von below}.  
 \end{proof}  
 
%

\subsection{A priori bounds}  In the previous subsection, we showed that smooth initial data have smooth solutions, provided certain compatibility conditions are satisfied.  Now we prove some a priori estimates satisfied by these solutions.  

We start with a bound on the time derivative, which follows from \eqref{As: time_bound} and the maximum principle.

\begin{prop} \label{P: linearized_argument} Let $a> 0$.  If $u_{0}$ and $R > 0$ satisfy the hypotheses of Proposition \ref{P: regular_existence}, and if $u^{(a)}$ is the solution obtained therein, then 
	\begin{equation*}
		u^{(a)}_{t}(x,0) = \epsilon u_{0,x_{i}x_{i}}(x) - \tilde{H}^{(R)}_{i}(x,0,u_{0,x_{i}}(x)) \quad \text{if} \, \, x \in \overline{I_{i}^{a}}, \, \, i \in \{1,2,\dots,K\},
	\end{equation*}
and, for each $(x,t) \in \mathcal{I} \times [0,T]$,
	\begin{equation} \label{E: time_derivative_bound}
		|u^{(a)}_{t}(x,t)| \leq [u^{(a)}_{t}(\cdot,0)]_{0} + Dt.
	\end{equation}
\end{prop}  

\begin{proof}  The claim concerning $u^{(a)}_{t}(\cdot,0)$ follows from the regularity established in Proposition \ref{P: regular_existence}.

Given $\zeta \in (0,T)$, define $v^{\zeta} : \bigcup_{i = 1}^{K} \overline{I_{i}^{a}} \times [0,T - \zeta] \to \mathbb{R}$ by $v^{\zeta}(x,t) = \frac{u^{(a)}(x,t + \zeta) - u^{(a)}(x,t)}{\zeta}$.  An immediate computation shows $v^{\zeta}$ is a classical solution of a linear parabolic equation of the following form:
	\begin{equation*}
		\left\{
			\begin{array}{r l}
				v^{\zeta}_{t} - \epsilon v^{\zeta}_{x_{i}x_{i}} + b_{i}^{\zeta}(x,t) v^{\zeta}_{x_{i}} - g_{i}^{\zeta}(x,t) = 0 & \text{in} \, \, I^{a}_{i} \times (0,T - \zeta) \\
				\sum_{i = 1}^{K} v^{\zeta}_{x_{i}} = 0 & \text{on} \, \, \{0\} \times (0,T - \zeta) \\
				v^{\zeta} = u^{(a)}_{t}(\cdot,0) & \text{on} \, \, \{-a_{i}\} \times (0,T - \zeta)
			\end{array}
		\right.
	\end{equation*}
Notice that $\{b_{1}^{\zeta},\dots,b_{K}^{\zeta}\}$ and  $\{g_{1}^{\zeta},\dots,g_{K}^{\zeta}\}$ are bounded functions by \eqref{As: continuity} and \eqref{As: time_bound}.  Specifically, the functions $\{g_{1}^{\zeta},\dots,g_{K}^{\zeta}\}$ are bounded above and below by $D$ and $-D$, respectively, independently of the choice of $\zeta$.  

We claim that if $ (x,t) \in \bigcup_{i = 1}^{K} \overline{I_{i}^{a}} \times [0,T-\zeta]$, then
	\begin{equation} \label{E: linear_upper_bound}
		v^{\zeta}(x,t) - Dt \leq \sup \left\{v^{\zeta}(x,0) \, \mid \, x \in \bigcup_{i = 1}^{K} \overline{I_{i}^{a}}\right\}.
	\end{equation}
To see this, fix $K > 0$ strictly greater than the suprema of the functions $\{|b_{1}^{\zeta}|,\dots,|b_{K}^{\zeta}|\}$ and notice that if $\delta > 0$, then the function $(x,t) \mapsto v^{\zeta}(x,t) - \delta x - (D + K\delta)t$ cannot attain its maximum in $\bigcup_{i = 1}^{K} (\overline{I^{a}_{i}} \setminus \{-a_{i}\})\times (0,T - \zeta]$.  Recalling that $v^{\zeta}$ is constant on $\bigcup_{i = 1}^{K} \{-a_{i}\} \times [0,T - \zeta]$ and sending $\delta \to 0^{+}$, we recover \eqref{E: linear_upper_bound}.  

Notice that for each $\zeta' \in (0,T)$, the H\"{o}lder continuity of $u^{(a)}_{t}$ implies $v^{\zeta} \to u^{(a)}_{t}$ uniformly in $\bigcup_{i = 1}^{K} \overline{I_{i}^{a}} \times [0,T - \zeta']$ as $\zeta \to 0^{+}$.  Thus, after sending $\zeta \to 0^{+}$ in \eqref{E: linear_upper_bound}, we find $u^{(a)}_{t}(x,t) \leq [u^{(a)}_{t}(\cdot,0)]_{0} + Dt$.  To see that $u^{(a)}_{t}(x,t) \geq -( [u^{(a)}_{t}(\cdot,0)]_{0} + Dt)$, we repeat the previous argument, replacing $v^{\zeta}$ by $-v^{\zeta}$. \end{proof}

Next, we leverage the bound on the time derivative to obtain a matching bound on the first order space derivatives.

\begin{prop} \label{P: space_lipschitz_estimate}  If $u^{(a)}$ is the solution obtained in Proposition \ref{P: regular_existence} and we define $C_{1} = [u_{t}(\cdot,0)]_{0} + DT$, then there is an $L > 0$ independent of $a$, depending on $u_{0}$ only through $[u_{0}]_{1} + \epsilon [u_{0}]_{2}$, and such that if $a > 2\left(2C_{1} + 1\right)T$ and $R \geq 2KL$, then
	\begin{equation} \label{E: space_lipschitz_estimate}
		|u^{(a)}(x,t) - u^{(a)}(y,t)| \leq KLd(x,y) \quad \text{if} \, \, d(x,0), d(y,0) \leq \frac{a}{2}, \, \, t \in [0,T].
	\end{equation}
\end{prop}  

\begin{proof}  First, let $L_{0} = [u_{0}]_{1} + \epsilon [u_{0}]_{2}$.  By \eqref{As: coercive}, there is an $L_{1} \geq 1$ such that
	\begin{equation*}
		-(C_{1} + 1) + H_{i}(t,x,p) \geq 1 \quad \text{if} \, \, |p| \geq L_{1}, \, \, i \in \{1,2,\dots,K\}.
	\end{equation*}
Let $L_{2} = L_{0} + L_{1}$.  Notice that since $C_{1}$ depends on $u_{0}$ only through $[u_{0}]_{1} + \epsilon [u_{0}]_{2}$, it follows that $L_{2}$ depends on $u_{0}$ only through that quantity.  Assume in what follows that $R \geq 2K(3L_{2} + 1)$.  

Fix $i \in \{1,2,\dots,K\}$ and $(x,t) \in \overline{I_{i}^{a}} \times (0,T)$ such that $d(x,0) \leq \frac{a}{2}$.  Define the test function $\varphi : \mathcal{I} \to \mathbb{R}$ exactly as in \eqref{E: knife_test_function}, but with $u^{(a)}(x,t)$ in place of $u(x)$ and $3L_{2} + 1$ in place of $L$.  Finally, define $w : \mathcal{I} \times [0,t] \to \mathbb{R}$ by 				
	\begin{equation*}
		w(y,s) = \varphi(y) + (C_{1} + 1)(t - s).
	\end{equation*}

We claim that the function $(y,s) \mapsto u^{(a)}(y,s) - w(y,s)$ defined in $\bigcup_{i  =1}^{K} \overline{I_{i}^{a}} \times [0,t]$ is maximized at $(x,t)$.  First, note that $u^{(a)}(x,t) - w(x,t) = 0$.  Moreover, if $s < t$, then the inequality $[u_{t}^{(a)}]_{0} \leq C_{1}$ implies
	\begin{equation*}
		u^{(a)}(x,s) - w(x,s) = u^{(a)}(x,s) - u^{(a)}(x,t) - (C_{1} + 1)(t - s) \leq - (t -s) < 0.
	\end{equation*}  
Therefore, the maximum does not occur at a point of the form $(x,s)$, where $s \in [0,t)$.  
	
If $(y,s)$ is the maximum of $u^{(a)} - w$ in $\bigcup_{i = 1}^{K} \overline{I^{a}_{i}} \times [0,t]$, $y \in \bigcup_{i = 1}^{K} I_{i}^{a} \setminus \{x\}$, and $s \in (0,t]$, then, in view of the choice of $R$, the equation yields
	\begin{equation*}
		-(C_{1} + 1) + H_{i}(t,x, u (3L_{2} + 1)) \leq 0 \quad \text{for some} \, \, u \in \{-1,1, K,-K\},
	\end{equation*}
contradicting the choice of $L_{1}$.  We get a contradiction similarly in the case when $y = 0$ and $s \in (0,t]$.  

If $(y,s)$ is the maximum, $y \neq x$, and $s = 0$, then the inequalities $[u_{0}]_{1} \leq L_{2}$ and $[u^{(a)}_{t}]_{0} \leq C_{1}$ yield the following
	\begin{equation*}
		0 \leq u^{(a)}(y,0) - w(y,0) \leq u_{0}(y) - u_{0}(x) - (3L_{2}+1) d(x,y) < 0,
	\end{equation*}
which is a contradiction.  

Finally, if $(y,s)$ is the maximum and $y = -a_{j}$ for some $j$, then the assumption $d(x,0) \leq \frac{a}{2}$, the inequalities $[u_{0}]_{1} \leq L_{2}$ and $[u^{(a)}_{t}]_{0} \leq C_{1}$, and the assumption $a > 2\left(2C_{1} + 1\right)T$ all come together to imply the following:
		\begin{align*}
		u^{(a)}(-a_{j},s) &\geq w(-a_{j},s) \\
			&= \varphi(-a_{j}) + (C_{1} + 1)(t - s) \\
			&\geq u^{(a)}(x,t) + (3L_{2} + 1)\left(\frac{a}{2}\right) + (C_{1} + 1)(t - s) \\
			&\geq \left(u_{0}(x) - u_{0}(-a_{j})\right) - C_{1}(t + s) + u^{(a)}(-a_{j},s) + (3L_{2} + 1) \left(\frac{a}{2}\right) \\
			&\quad\quad  + (C_{1} + 1)(t - s) \\
			&\geq -L_{2} \left(\frac{3a}{2}\right) - (2C_{1} + 1) T + (3L_{2} + 1) \left(\frac{a}{2}\right) + u^{(a)}(-a_{j},s) \\
			&> u^{(a)}(-a_{j},s),
	\end{align*}
which is another contradiction.  Therefore, the function $(y,s) \mapsto u^{(a)}(y,s) - w(y,s)$ is maximized in $\bigcup_{i = 1}^{K} \overline{I_{i}^{a}} \times [0,t]$ at the point $(x,t)$.  

Thus, restricting to points $(y,s) = (y,t)$, we find
	\begin{equation*}
		u^{(a)}(y,t) - u^{(a)}(x,t) \leq K (3L_{2} + 1) |x - y| \quad \text{if} \, \, y \in \overline{I_{j}}.  
	\end{equation*}
After setting $L = 3L_{2} + 1$, we conclude that \eqref{E: space_lipschitz_estimate} holds.    \end{proof}

\subsection{Viscosity solutions}  Now we prove Theorems \ref{T: existence_time} and \ref{T: existence_time_viscous}.

To prove these, we need to ensure that we can approximate the initial datum with a regular function that satisfies the compatibility conditions \eqref{E: compatibility_1} and \eqref{E: compatibility_2}.  That is the purpose of the next two results.  

\begin{lemma} \label{L: calculus_exercise_compatibility}  Suppose $p : (-\infty,0] \to \mathbb{R}$ is a thrice continuously differentiable function for which there is a constant $C_{p} > 0$ such that, for each $x \in (-\infty,0]$,
\begin{equation*}
|p'(x)| + |p''(x)| \leq C_{p}
\end{equation*}
and $\sup \left\{|p'''(x)| \, \mid \, x \in (-\infty,0]\right\} < \infty$.  
Let $b \in \mathbb{R}$.  There is a universal constant $C'_{p} > 0$ depending only on $C_{p}$ and $b$ such that if $\zeta > 0$, then there is a thrice continuously differentiable function $p_{\zeta} : (-\infty,0] \to \mathbb{R}$ such that $p_{\zeta}(0) = p(0)$, $p_{\zeta}'(0) = p'(0)$, $p_{\zeta}''(0) = b$, $\sup \left\{|p_{\zeta}'''(x)| \, \mid \, x \in (-\infty,0]\right\} < \infty$, and, for each $x \in (-\infty,0]$,
	\begin{align*}
		|p_{\zeta}'(x)| + |p_{\zeta}''(x)| &\leq C_{p}'\\
		|p_{\zeta}(x) - p(x)| &\leq C_{p}' \zeta^{2}
	\end{align*}
\end{lemma}

\begin{proof}  Given $\zeta > 0$, choose a smooth function $\varphi_{\zeta} : (-\infty,0] \to \mathbb{R}$ such that 
\begin{align*}
\varphi_{\zeta}(x) = 0 \quad \text{if} \, \, x \in (-\infty,-2\zeta]&, \quad
\varphi_{\zeta}(x) = 1 \quad \text{if} \, \, x \in [-\zeta,0], \\
\max \left\{ \zeta |\varphi_{\zeta}'(x)|, \zeta^{2} |\varphi_{\zeta}''(x)| \right\} &\leq C_{0} \quad \text{if} \, \, x \in (-\infty,0],
\end{align*}
where $C_{0} \geq 1$ is a universal constant independent of $\zeta$, $p$, $C_{p}$, and $b$.  

Define $q(x) = p(0) + p'(0)x + \frac{b x^{2}}{2}$ and then let $p_{\zeta} : (-\infty,0] \to \mathbb{R}$ be given by
\begin{equation*}
p_{\zeta}(x) = (1 - \varphi_{\zeta}(x))p(x) + \varphi_{\zeta}(x)q(x).
\end{equation*}
The choice of $\varphi_{\zeta}$ implies $p_{\zeta}(0) = p(0)$, $p_{\zeta}'(0) = p'(0)$, and $p_{\zeta}''(0) = b$.  Moreover, $\sup \left\{ |p'''_{\zeta}(x)| \, \mid \, x \in(-\infty,0]\right\} < \infty$ holds.

Differentiating $p_{\zeta}$, we find
\begin{align*}
	p_{\zeta}'(x) &= (1 - \varphi_{\zeta}(x))p'(x) + \varphi_{\zeta}(x) q'(x) + \varphi_{\zeta}'(x)(q(x) - p(x)), \\
	p_{\zeta}''(x) &= (1 - \varphi_{\zeta}(x))p''(x) + \varphi_{\zeta}(x) q''(x) + 2\varphi_{\zeta}'(x)(q'(x) - p'(x)) \\
		&\qquad+ \varphi_{\zeta}''(x)(q(x) - p(x)).
\end{align*}
Thus, the regularity of $p$ and the definition of $\varphi_{\zeta}$ imply the desired bounds by Taylor expansion at $0$.  \end{proof}  

%
%

Now we use Lemma \ref{L: calculus_exercise_compatibility} to show how to approximate a $C^{3}(\mathcal{I})$ function by one that satisfies the compatibility conditions.  

\begin{prop} \label{P: compatibility_conditions_stuff} Suppose $u_{0} \in C^{3}(\mathcal{I})$ satisfies $\sum_{i = 1}^{K} u_{0,x_{i}}(0) = 0$ and $[u_{0}]_{1} + [u_{0}]_{2} + [u_{0}]_{3} < \infty$.  
Then there is a universal constant $C' > 0$ depending only on $[u_{0}]_{1} + [u_{0}]_{2}$ such that for all $\zeta > 0$, there is a $u_{0}^{\zeta} \in C^{3}(\mathcal{I})$ satisfying the following conditions:
	\begin{itemize}
		\item[(i)] $[u_{0}^{\zeta}]_{1} + [u_{0}^{\zeta}]_{2} + [u_{0}^{\zeta}]_{3} < \infty$ 
		\item[(ii)] For each $i \in \{1,2,\dots,K\}$,
			\begin{align*}
				u^{\zeta}_{0,x_{i}}(0) &= u_{0,x_{i}}(0) \\
				-\epsilon u^{\zeta}_{0,x_{i}x_{i}}(0) + H_{i}(0,0,u^{\zeta}_{0,x_{i}}(0)) &= - \epsilon u^{\zeta}_{0,x_{1}x_{1}}(0) + H_{1}(0,0,u^{\zeta}_{0,x_{1}}(0)) \\
			\end{align*}
		\item[(iii)] For each $i \in \{1,2,\dots,K\}$ and each $x \in \overline{I_{i}}$,
			\begin{align}
				|u^{\zeta}_{0,x_{i}}(x)| + |u^{\zeta}_{0,x_{i}x_{i}}(x)| &\leq C' \label{E: key_uniform_bound} \\
				|u^{\zeta}_{0}(x) - u_{0}(x)| &\leq C'\zeta^{2} \label{E: easy_convergence}
			\end{align}
	\end{itemize}
\end{prop}  

\begin{proof}  Define $\{b_{1},\dots,b_{K}\}$ by $b_{1} = 1$ and
\begin{equation}
b_{i} = \epsilon^{-1}\left(\epsilon - H_{1}(0,0,u_{0,x_{1}}(0)) + H_{i}(0,0,u_{0,x_{i}}(0))\right).
\end{equation}
Notice that this immediately implies $\{b_{1},\dots,b_{K}\}$ satisfy, for each $i$,
\begin{equation}
-\epsilon b_{i} + H{i}(0,0,u_{0,x_{i}}(0)) = -\epsilon b_{1} + H_{1}(0,0,u_{0,x_{1}}(0)).
\end{equation}

Now apply Lemma \ref{L: calculus_exercise_compatibility} to obtain functions $\{\psi^{\zeta,(1)},\dots,\psi^{\zeta,(K)}\}$ and a constant $C' > 0$ so that, for each $i \in \{1,2,\dots,K\}$, $\psi^{\zeta,(i)}$ has domain $(-\infty,0)$ and the following relations hold:
\begin{align}
\sup \left\{ |\psi^{\zeta,(i)}_{x}(x)| + |\psi^{\zeta,(i)}_{xx}(x)| \, \mid \, x \leq 0 \right\} &\leq C' \\
\psi^{\zeta,(i)}(0) &= u_{0}(0) \label{E: function} \\
\psi^{\zeta,(i)}_{x}(0) &= u_{0,x_{i}}(0) \\
\psi^{\zeta,(i)}_{xx}(0) &= b_{i} \\
\sup \left\{ |\psi^{\zeta,(i)}(x) - u_{0}(x)| \, \mid \, x \in \overline{I_{i}}\right\} &\leq C' \zeta^{2}
\end{align}
By construction, $\{\psi^{\zeta,(1)},\dots,\psi^{\zeta,(K)}\}$ come together to form a function $u^{\zeta}_{0} \in C^{3}(\mathcal{I})$ with the desired properties.  
\end{proof}  

In the proof that follows, we will not use the $\epsilon$ superscript to denote solutions of \eqref{E: viscous_time}.  Since we are only dealing with \eqref{E: viscous_time} and not \eqref{E: time} in the proof, we hope this will not cause too much confusion.

\begin{proof}[Proof of Theorem \ref{T: existence_time_viscous}]  First, assume $u_{0} \in C^{3}(\mathcal{I})$ and $[u_{0}]_{1} + [u_{0}]_{2} + [u_{0}]_{3} < \infty$.  For $\zeta > 0$ sufficiently small, let $u^{\zeta}_{0}$ be the function obtained from Proposition \ref{P: compatibility_conditions_stuff}, and fix $R \geq 2C'$, where $C'$ is the constant defined in the proposition.  For each $a > 0$, let $u^{(a),\zeta}$ be the solution of \eqref{E: approximate_equation}  with initial datum $u_{0}^{\zeta}$ obtained in Proposition \ref{P: regular_existence}.  

By Propositions \ref{P: linearized_argument} and \ref{P: space_lipschitz_estimate} and the uniform bound \eqref{E: key_uniform_bound}, there are constants $B, L, a_{0} > 0$, all independent of $\zeta$, such that if $a \geq a_{0}$ and $R \geq 2KL$, then $[u_{t}^{(a),\zeta}]_{0} \leq B$ and \eqref{E: space_lipschitz_estimate} holds with $u^{(a)} = u^{(a),\zeta}$.  Henceforth, assume $R \geq 2KL$.  

The estimates obtained in the previous paragraph imply we can fix a sequence $(a_{n})_{n \in \mathbb{N}} \subseteq [a_{0},\infty)$ and a function $u^{\zeta} : \mathcal{I} \times [0,T] \to \mathbb{R}$ such that $\lim_{n \to \infty} a_{n} = \infty$ and $u^{\zeta} = \lim_{n \to \infty} u^{(a_{n}),\zeta}$ locally uniformly in $\mathcal{I} \times [0,T]$.  The local uniform convergence and the stability of viscosity solutions together imply $u^{\zeta}$ is a solution of \eqref{E: viscous_time} with Hamiltonians $\{\tilde{H}^{(R)}_{1},\dots,\tilde{H}^{(R)}_{K}\}$ and initial datum $u_{0}^{\zeta}$.  

Since $\lim_{n \to \infty} a_{n} = \infty$, \eqref{E: space_lipschitz_estimate} shows that $u^{\zeta}$ satisfies $\text{Lip}(u^{\zeta}(\cdot,t)) \leq KL$ for all $t \in [0,T]$.  
Thus, as $\tilde{H}^{(R)}_{i}(t,x,p) = H_{i}(t,x,p)$ for all $|p| \leq KL$, it follows that $u^{\zeta}$ is actually a solution of \eqref{E: viscous_time} with the Hamiltonians $\{H_{1},\dots,H_{K}\}$. By Theorem \ref{T: viscous_comparison_time}, we deduce that the limit is unique and, in fact, $u^{\zeta} = \lim_{a \to \infty} u^{(a),\zeta}$ locally uniformly in $\mathcal{I} \times [0,T]$.

Finally, we  send $\zeta \to 0^{+}$.  Since $u^{\zeta}_{0} \to u_{0}$ uniformly in $\mathcal{I}$ as $\zeta \to 0^{+}$, Remark \ref{R: contractivity} implies $(u^{\zeta})_{\zeta > 0}$ is uniformly Cauchy in $\mathcal{I} \times [0,T]$.  In particular, $u = \lim_{\zeta \to 0^{+}} u^{\zeta}$ exists uniformly in $\mathcal{I} \times [0,T]$ and the stability of viscosity solutions implies $u$ solves \eqref{E: viscous_time} with initial datum $u_{0}$.  

Since $L$ and $B$ were independent of $\zeta$, the uniform convergence $u^{\zeta} \to u$ implies $\text{Lip}(u) \leq B + L$.

To remove the $C^{3}(\mathcal{I})$ assumption, we argue by approximation.  That is, if $u_{0} \in C^{1}(\mathcal{I})$ and $[u_{0}]_{1} + [u_{0}]_{2} < \infty$, we obtain the solution $u$ of \eqref{E: viscous_time} and show that it is in $\text{Lip}(\mathcal{I} \times [0,T])$ by approximating $u_{0}$ with functions $(u_{0,n}) \subseteq C^{3}(\mathcal{I})$ such that $u_{0,n} \to u_{0}$ uniformly in $\mathcal{I}$ and $\sup \left\{ [u_{0,n}]_{1} + [u_{0,n}]_{2} \, \mid \, n \in \mathbb{N} \right\} < \infty$.  Since the proof that it is possible to do this is very similar to some of the arguments presented in Section \ref{S: useful_approximations}, we omit it.  

Finally, if $u_{0} \in UC(\mathcal{I})$, then, arguing as in Remark \ref{R: BUC_approximation} below, we can fix a sequence $(u_{0}^{(n)})_{n \in \mathbb{N}} \in C^{1}(\mathcal{I})$ satisfying $[u_{0}^{(n)}]_{1} + [u_{0}^{(n)}]_{2} < \infty$ for each $n$ and such that $u_{0}^{(n)} \to u_{0}$ uniformly in $\mathcal{I}$ as $n \to \infty$.  By the previous step, we can let $u^{(n)}$ be the solution of \eqref{E: viscous_time} with initial datum $u_{0}^{(n)}$, and Remark \ref{R: contractivity} shows that $(u^{(n)})_{n \in \mathbb{N}}$ is uniformly Cauchy in $\mathcal{I} \times [0,T]$.  Therefore, as before, the limit $u = \lim_{n \to \infty} u^{(n)}$ is a continuous viscosity solution of \eqref{E: viscous_time}.  In fact, $u \in UC(\mathcal{I} \times [0,T])$, being the uniform limit of such functions.  \end{proof}

\subsection{Existence of solutions of \eqref{E: time}}  Finally, we establish the existence of solutions of \eqref{E: time}.  Here, as in the error analysis, we invoke Proposition \ref{P: lipschitz_approximation}.

\begin{proof}[Proof of Theorem \ref{T: existence_time}] First, assume $u_{0} \in \text{Lip}(\mathcal{I})$.  By Proposition \ref{P: lipschitz_approximation} below, there is a family $(v_{0}^{\epsilon})_{\epsilon > 0} \subseteq C^{1} \left(\mathcal{I} \right)$ such that $\lim_{\epsilon \to 0^{+}} [v_{0}^{\epsilon} - u_{0}]_{0} =0$ and $\sup\{[v_{0}^{\epsilon}]_{1} + \epsilon [v_{0}^{\epsilon}]_{2} \, \mid \, \epsilon > 0\} \leq C'$, where $C'$ only depends on $\text{Lip}(u_{0})$.

For each $\epsilon > 0$, let $v^{\epsilon}$ be the solution of \eqref{E: viscous_time} with initial datum $v_{0}^{\epsilon}$.  Since $[v_{0}^{\epsilon}]_{1} + \epsilon [v_{0}^{\epsilon}]_{2}$ is bounded, Theorem \ref{T: existence_time_viscous} implies there is an $L' > 0$ depending on $C'$, but not $\epsilon$, such that $\text{Lip}(v^{\epsilon}) \leq L'$.

In view of the uniform Lipschitz estimate, we can fix $(\epsilon_{n})_{n \in \mathbb{N}}$ and a function $u : \mathcal{I} \times [0,T] \to \mathbb{R}$ such that $\lim_{n \to \infty} \epsilon_{n} = 0$ and $u = \lim_{n \to \infty} v^{\epsilon_{n}}$.  By the stability of viscosity solutions, $u$ solves \eqref{E: time} with initial datum $u_{0}$.  In fact, Theorem \ref{T: inviscid_comparison_time} implies $u$ is independent of the choice of subsequence, and, thus, $u = \lim_{\epsilon \to 0^{+}} v^{\epsilon}$.  Moreover, $\text{Lip}(u) \leq L'$.  

In general, if $u_{0} \in UC(\mathcal{I})$, then there is a sequence $(u_{0}^{(n)})_{n \in \mathbb{N}} \subseteq \text{Lip}(\mathcal{I})$ such that $u_{0}^{(n)} \to u_{0}$ uniformly in $\mathcal{I}$ as $n \to \infty$.  (See Remark \ref{R: BUC_approximation}.)  Let $u^{(n)}$ denote the solution of \eqref{E: time} with initial datum $u_{0}^{(n)}$.  By Remark \ref{R: contractivity}, $(u^{(n)})_{n \in \mathbb{N}}$ is uniformly Cauchy in $\mathcal{I} \times [0,T]$.  Invoking stability of viscosity solutions, we conclude that the limit $u = \lim_{n \to \infty} u^{(n)}$ is a solution of \eqref{E: time} with initial datum $u_{0}$.  Moreover, as a uniform limit of such functions, $u \in UC(\mathcal{I} \times [0,T])$.  \end{proof}    

\subsection{A useful approximation result} \label{S: useful_approximations} In the error analysis of Section \ref{S: cauchy_problem}, we used the following result:

\begin{prop} \label{P: lipschitz_approximation} Let $u_{0} \in \text{Lip}\left( \mathcal{I} \right)$.  For each $\epsilon >0$, there is a $v_{0}^{\epsilon} \in C^{1} \left(\mathcal{I}\right)$ and a universal constant $C > 0$ such that:
	\begin{align*}
		[v_{0}^{\epsilon} - u_{0}]_{0} &\leq \text{Lip}(u_{0})\epsilon \\
		[v_{0}^{\epsilon}]_{1} &\leq C \text{Lip}(u_{0}) \\
		[v_{0}^{\epsilon}]_{2} &\leq C \epsilon^{-1} \text{Lip}(u_{0})
	\end{align*}
Moreover, $v_{0}^{\epsilon}$ can be chosen in such a way that both $v_{0}^{\epsilon}(0) = u_{0}(0)$ and $\sum_{i = 1}^{K} v^{\epsilon}_{0,x_{i}}(0) = 0$.  
\end{prop}  

The same method used to prove Proposition \ref{P: lipschitz_approximation} below can be used to establish more general approximation results for functions on $\mathcal{I}$ with varying degrees of regularity.  We will not expound on those here.  However, since we use an approximation result for functions in $\text{UC}(\mathcal{I})$ in the proof of Theorems \ref{T: existence_time} and \ref{T: existence_time_viscous}, we include its statement as a remark:

\begin{remark} \label{R: BUC_approximation}  Arguing as in the proof that follows, we can show that if $u_{0} \in \text{UC}(\mathcal{I})$, then there is a sequence of functions $(u^{(n)}_{0})_{n \in \mathbb{N}} \subseteq C^{2}(\mathcal{I})$ satisfying $[u^{(n)}_{0}]_{1} + [u^{(n)}_{0}]_{2} < \infty$, $\sum_{i = 1}^{K} u_{0,x_{i}}^{(n)}(0) = 0$, and such that 
	\begin{equation*}
		[u_{0}^{(n)} - u_{0}]_{0} \leq \omega(2 n^{-1}),
	\end{equation*}
where $\omega$ is the modulus of continuity of $u_{0}$ in $\mathcal{I}$.  \end{remark}

\begin{proof}[Proof of Proposition \ref{P: lipschitz_approximation}]  First, given $\epsilon > 0$, let $\varphi_{\epsilon}$ be as in the proof of Lemma \ref{L: calculus_exercise_compatibility}.  Additionally, let $\rho : \mathbb{R} \to [0,\infty)$ be a smooth symmetric function supported in $(-1,1)$ and satisfying $\int_{-\infty}^{\infty} \rho(x) \, dx = 1$.  

Define $\tilde{\psi}_{i}^{\epsilon} : \overline{I_{i}} \to \mathbb{R}$ by $\tilde{\psi}_{i}^{\epsilon}(x) = \epsilon^{-1} \int_{-\infty}^{\infty} u_{0,i}(y) \rho(\epsilon^{-1}(x - y)) \, dy$, where $u_{0,i}$ is given by $u_{0,i}(x) = u_{0}(x_{i})$ if $x < 0$ and $u_{0,i}(x) = u_{0}(0)$, otherwise.  Recall the following well-known properties of $\tilde{\psi}_{i}^{\epsilon}$:
	\begin{align*}
		\sup \left\{|\tilde{\psi}_{i}^{\epsilon}(x) - u_{0}(x)| \, \mid \, x \in I_{i} \right\} &\leq \text{Lip}(u_{0})\epsilon \\
		\sup \left\{ |\tilde{\psi}_{i,x_{i}}^{\epsilon}(x)| \, \mid \, x \in I_{i} \right\} &\leq \text{Lip}(u_{0}) \\
		\sup \left\{|\tilde{\psi}_{i,x_{i}x_{i}}^{\epsilon}(x)| \, \mid \, x \in I_{i} \right\} &\leq C \text{Lip}(u_{0}) \epsilon^{-1} 
	\end{align*}    
We proceed by combining $\{\tilde{\psi}^{\epsilon}_{1},\dots,\tilde{\psi}^{\epsilon}_{K}\}$ into a function on $\mathcal{I}$.  

Define $v_{0}^{\epsilon} : \mathcal{I} \to \mathbb{R}$ by 
	\begin{equation*}
		v_{0}^{\epsilon}(x) = (1 - \varphi_{\epsilon}(x)) \tilde{\psi}^{\epsilon}_{i}(x) + \varphi_{\epsilon}(x) u_{0}(0) \quad \text{if} \, \, x \in \overline{I_{i}}, \, \, i \in \{1,2,\dots,K\}.
	\end{equation*}
Observe that $\min\{|\varphi_{\epsilon}'(x)|, |\varphi_{\epsilon}''(x)|\} > 0$ only if $x \in [-2\epsilon, \epsilon]$.  Moreover, for such $x$, the following inequality holds:
	\begin{equation*}
		|\tilde{\psi}^{\epsilon}_{i}(x) - u_{0}(0)| \leq |\tilde{\psi}^{\epsilon}_{i}(x) - u_{0}(x)| + |u_{0}(x) - u_{0}(0)| \leq 3 \text{Lip}(u_{0}) \epsilon.
	\end{equation*}
Therefore, we can argue as in Lemma \ref{L: calculus_exercise_compatibility} to see that $v_{0}^{\epsilon}$ satisfies the required estimates.  

Finally, $v_{0}^{\epsilon}(x) = u_{0}(0)$ if $x \in \bigcup_{i = 1}^{K} \overline{I_{i}^{\epsilon}} $ so $\sum_{i = 1}^{K} v^{\epsilon}_{0,x_{i}}(0) = 0$.  
\end{proof}



\section{Time-Dependent Finite-Difference Schemes}  \label{A: cauchy_fd_explanation}

In this section, we show that the finite-difference scheme approximating \eqref{E: time} is monotone provided a CFL-type condition is satisfied.  We also establish the required regularity properties of the solution.

We begin by introducing the necessary terminology.  A function $V : \mathcal{J} \times S \to \mathbb{R}$ is said to be a sub-solution of the scheme \eqref{E: cauchy_fd} if it satisfies the system of inequalities obtained by replacing all equal signs with $\leq$.  Analogously, a function $W$ on the same domain is called a super-solution of the scheme \eqref{E: cauchy_fd} if it satisfies the system of inequalities obtained by replacing all equal signs with $\geq$.  As in the stationary case, the scheme is monotone when sub- and super-solutions obey a discrete maximum principle.  This is made precise in the following definition.

\begin{definition} \label{D: monotone_time} The finite-difference scheme \eqref{E: cauchy_fd} is called \emph{monotone} if the following two criteria hold:

(i) If $V,\chi : \mathcal{J} \times \{0,1,\dots,N\} \to \mathbb{R}$, $V$ is a sub-solution of \eqref{E: cauchy_fd}, and $V - \chi$ has a global maximum at $(m,s)$ with $s > 0$ and $m \in J_{i}$, then
	\begin{equation*}
	\frac{\chi(m,s) - \chi(m,s - 1)}{\Delta t} + F_{i}(D^{+}\chi(m,s - 1),D^{-}\chi(m,s - 1)) \leq f_{i}(s\Delta t, -m \Delta x)
	\end{equation*}
if $m \neq 0$, and
	\begin{equation*}
	\sum_{i = 1}^{K} (\chi(0,s) - \chi(1_{i},s)) \leq 0, \quad \text{otherwise}.
	\end{equation*}

(ii) If $W,\chi : \mathcal{J} \times \{0,1,\dots,N\} \to \mathbb{R}$, $W$ is a super-solution of \eqref{E: cauchy_fd}, and $W - \chi$ has a global minimum at $(m,s)$ with $s > 0$ and $m \in J_{i}$, then
	\begin{equation*}
	\frac{\chi(m,s) - \chi(m,s - 1)}{\Delta t} + F_{i}(D^{+}\chi(m,s - 1),D^{-}\chi(m,s - 1)) \geq f_{i}(s\Delta t, -m \Delta x)
	\end{equation*} 
if $m \neq 0$, and
	\begin{equation*}
	\sum_{i = 1}^{K} (\chi(0,s) - \chi(1_{i},s)) \geq 0, \quad \text{otherwise}.
\end{equation*}
\end{definition}  

As in the time-independent setting, when we use the term ``monotone" in reference to \eqref{E: cauchy_fd}, we always mean it in the sense of the previous definition.

The error analysis of \eqref{E: cauchy_fd} uses a discrete version of Lipschitz continuity.  Specifically, given a function $U : \mathcal{J} \times S \to \mathbb{R}$, we say that $U$ is Lipschitz if 
\begin{equation*}
\text{Lip}(U) := \sup \left\{|U(m,s) - U(k,r)| \, \mid \, \frac{d(-m\Delta x, -k \Delta x)}{\Delta x} + |s - r| \leq 1 \right\} < \infty.
\end{equation*}

The following result gives sufficient conditions under which the scheme \eqref{E: cauchy_fd} is monotone and the solution is Lipschitz.  Recall that $L_{G}$ is a uniform bound on the Lipschitz constants of the numerical Hamiltonians $G_{1},\dots,G_{K}$, and $L_{c}$ is the cut-off in assumption \eqref{As: num_Ham_consistent}. 

\begin{prop} \label{P: monotone_time} There is an $\tilde{L}_{c} > 0$ depending only on $\text{Lip}(u_{0})$, $D$, $L_{G}$, $L_{2}$, and $T$ such that if \eqref{As: CFL_cauchy} holds and $L_{c} \geq \tilde{L}_{c}$, then the finite-difference scheme \eqref{E: cauchy_fd} is monotone and the solution $U$ of \eqref{E: cauchy_fd} satisfies $\text{Lip}(U) \leq \tilde{L}_{c} \Delta x$.    \end{prop}

\begin{proof}  From \eqref{As: CFL_cauchy}, we see that $\epsilon \geq 2L_{G} \Delta x$ and $\frac{\Delta x}{\Delta t} - \frac{ \epsilon}{\Delta x} - 2L_{G} \geq 0$.  From this, it follows that the expression	
	\begin{equation*}
		\frac{\chi(k,s) - \chi(k,s-1)}{\Delta t} + F_{i}(D^{+}\chi(k,s-1),D^{-}\chi(k,s-1))
	\end{equation*}  
is non-increasing in the variables $\chi(k,s-1)$, $\chi(k+1,s-1)$, and $\chi(k-1,s-1)$.  We leave it to the reader to verify that this implies \eqref{E: cauchy_fd} is monotone according to Definition \ref{D: monotone_time}.  

To see that $U$ is Lipschitz, we argue as in the continuum case.  To start with, define $V : \mathcal{J} \times (S \setminus \{N\}) \to \mathbb{R}$ by $V(k,s) = \frac{U(k,s + 1) - U(k,s)}{\Delta t}$.  Observe that if $s \in S \setminus \{N,N - 1\}$ and $k \in \mathcal{J} \setminus \{0\}$, then
	\begin{equation} \label{E: linearized_scheme}
		D_{t}V(k,s) + B_{i}^{+}(k,s) D^{+}V(k,s) + B_{i}^{-}(k,s) D^{-}V(k,s) - D_{t} \Gamma(k,s) = 0,
	\end{equation}
where the coefficients of the equation are defined as follows:
	\begin{align*}
	B_{i}^{+}(k,s) &= \frac{F_{i}(D^{+}U(k,s + 1),D^{-}U(k,s+1)) - F_{i}(D^{+}U(k,s),D^{-}U(k,s+1))}{D^{+}U(k,s + 1) - D^{+}U(k,s)}, \\
	B_{i}^{-}(k,s) &= \frac{F_{i}(D^{+}U(k,s),D^{-}U(k,s+1)) - F_{i}(D^{+}U(k,s),D^{-}U(k,s))}{D^{-}U(k,s+1) - D^{-}U(k,s)}, \\
	\Gamma(k,s) &= f_{i}(s \Delta t, -k\Delta x).
	\end{align*}     
The discussion in the previous paragraph implies $B_{i}^{+} \leq 0$ and $B_{i}^{-} \geq 0$ pointwise in $\mathcal{J} \times (S \setminus \{N\})$.  

In addition to \eqref{E: linearized_scheme}, $V$ satisfies $\sum_{i = 1}^{K} D^{+}V(1_{i},s) = 0$ if $s \in S \setminus \{1,N\}$.  Notice that if we define a scheme using \eqref{E: linearized_scheme} and this discrete Kirchoff condition, then the signs of $B_{i}^{+}$ and $B_{i}^{-}$ imply it is monotone in $\mathcal{J} \times (S \setminus \{N\})$ in the sense of Definition \ref{D: monotone_time}. 

By \eqref{As: time_bound}, $|D_{t}\Gamma| \leq D$ pointwise in $\mathcal{J} \times S \setminus \{N\}$.  Therefore, using monotonicity and arguing as in Proposition \ref{P: linearized_argument}, we find that if $(k,s) \in \mathcal{J} \times (S \setminus \{N\})$, then
	\begin{equation*}
		|V(k,s)| \leq \sup \left\{ |V(k,0)| \, \mid \, k \in \mathcal{J} \right\} + DT. 
	\end{equation*} 
In particular, since $V(k,0)$ is determined by $u_{0}$, there is a constant $C_{0}$ depending only on $\text{Lip}(u_{0})$ such that $|V| \leq C_{0} + DT$ pointwise.  Notice that, by \eqref{As: CFL_cauchy}, $C_{0}$ can be chosen independent of $\Delta x$ and $\epsilon$, though it does depend on $L_{2}$.

Now we show that the finite differences $D^{+}U$ and $D^{-}U$ are uniformly bounded.  Indeed, if we fix $s \in S \setminus \{N\}$, then the function $m \mapsto U(m,s)$ defined in $\mathcal{J}$ satisfies the stationary finite difference equation
	\begin{equation} \label{E: stopped_time_discrete}
		V(m,s) + F_{i}(D^{+}U(m,s),D^{-}U(m,s)) = f_{i}(s\Delta t,-m\Delta x) \quad \text{in} \, \, J_{i}.
	\end{equation} 
Since $V$ is uniformly bounded and the assumption $\epsilon \geq 2 L_{G} \Delta x$ implies the difference equation \eqref{E: stopped_time_discrete} is monotone, we can argue exactly as in Theorem \ref{T: stationary_scheme} to see that there is an $\tilde{L}_{c} > 0$ depending only on $C_{0}$ and $D$, but not on $s$, such that if $L_{c} \geq \tilde{L}_{c}$, then $\text{Lip}(U(\cdot,s)) \leq \tilde{L}_{c}\Delta x$.  

The bound we obtained through the equation only applies if $s < N$.  To get a bound at $s = N$, observe that the assumption $\frac{\Delta x}{\Delta t} \geq 2 L_{G}$ implies
	\begin{align*}
		|U(k + 1,N) - U(k,N)| &\leq |U(k + 1,N) - U(k+1,N-1)| \\
			&\quad + |U(k+1,N-1)- U(k,N-1)| \\
			&\quad + |U(k,N-1) - U(k,N)| \\
			&\leq 2(C_{0} + DT)\Delta t + \tilde{L}_{c}\Delta x \\
			&\leq \left(\frac{C_{0} + DT}{L_{G}} + \tilde{L}_{c} \right) \Delta x
	\end{align*}
Thus, making $\tilde{L}_{c}$ larger if necessary, we can assume that $\text{Lip}(U(\cdot,s)) \leq \tilde{L}_{c}$ independently of $s \in S$.  Making $\tilde{L}_{c}$ larger again, we can assume that $\tilde{L}_{c} \geq \frac{C_{0} + DT}{L_{G}}$ and, thus, $|V| \leq \tilde{L}_{c}L_{G}$ pointwise.  From this and the assumption that $\frac{\Delta t}{\Delta x} \leq L_{G}^{-1}$, we conclude that $\text{Lip}(U) \leq \tilde{L}_{c}\Delta x$ on $\mathcal{J} \times S$.  \end{proof}

%
 


\section{Proof of Theorem \ref{T: comparison}} \label{A: purely_technical}  In this section, we take on the hardest step in the comparison results presented above.  In order to apply \cite[Lemma 3.1]{time-dependent}, we need to understand, roughly speaking, the extent to which the equation ``sees" the differentiability (or lack thereof) of a sub-solution or super-solution at the junction.

%
%
%

In what follows, given $u : (-\infty,0] \to \mathbb{R}$ and $x \in (-\infty,0]$, we define $J^{+}u(x)$ to be the set of all $p \in \mathbb{R}$ such that
	\begin{equation*}
		u(y) \leq u(x) + p(y - x) + o(|y -x|) \quad \text{as} \, \, y \to x.
	\end{equation*}
$J^{-}u(x)$ is defined by $J^{-}u(x) = -J^{+}(-u)(x)$.  

Notice that this is analogous to the definitions in Appendix \ref{A: reformulation}.  In particular, given $x \in \overline{I_{i}}$ and $u : \mathcal{I} \to \mathbb{R}$, if $u_{i} : (-\infty,0] \to \mathbb{R}$ is defined by restricting $u$ to $\overline{I}_{i}$, then $J^{+}_{i}u(x) = J^{+}u_{i}(x)$ and $J^{-}_{i}u(x) = J^{-}u_{i}(x)$.  

\begin{lemma} \label{L: differentiable_structure} If $u : (-\infty,0] \to \mathbb{R}$ is upper semi-continuous continuous and $u_{x}(0)$ exists, then there are sequences $(x_{n}^{+})_{n \in \mathbb{N}} \subseteq (-\infty,0)$, $(p_{n}^{+})_{n \in \mathbb{N}} \subseteq \mathbb{R}$ such that 
\begin{itemize}
\item[(a)] $p_{n}^{+} \in J^{+}u(x_{n}^{+})$ for each $n \in \mathbb{N}$
\item[(b)] $\lim_{n \to \infty} p_{n}^{+} = u_{x}(0)$ 
\item[(c)] $\lim_{n \to \infty} x_{n}^{+} = 0$ 
\item[(d)] $\lim_{n \to \infty} u(x_{n}^{+}) = u(0)$
\end{itemize}

Similarly, if $v : (-\infty,0] \to \mathbb{R}$ is lower semi-continuous and $v_{x}(0)$ exists, then there are sequences $(x_{n}^{-})_{n \in \mathbb{N}} \subseteq (-\infty,0)$ and $(q_{n}^{-})_{n \in \mathbb{N}} \subseteq \mathbb{R}$ such that
\begin{itemize}
\item[(a)] $q_{n}^{-} \in J^{-}v(x_{n}^{-})$ for each $n \in \mathbb{N}$
\item[(b)] $\lim_{n \to \infty} q_{n}^{-} = v_{x}(0)$ 
\item[(c)] $\lim_{n \to \infty} x_{n}^{-} = 0$ 
\item[(d)] $\lim_{n \to \infty} v(x_{n}^{-}) = v(0)$
\end{itemize}
\end{lemma}  

\begin{proof}  Regarding $(x_{n}^{+},p_{n}^{+})$, this follows from the proof of Lemma \ref{L: slopes} and the fact that, in this case, $J^{+}u(0) = (-\infty,u_{x}(0)]$.  To obtain the sequences $(x_{n}^{-},p_{n}^{-})$, use the fact that $-v$ is upper semi-continuous and $J^{+}(-v)(x) = -J^{-}v(x)$.  \end{proof}

When the solution is not differentiable at the junction, Lemma \ref{L: differentiable_structure} is replaced by the following one:

\begin{lemma} \label{L: oscillatory_structure} Suppose $u : (-\infty,0] \to \mathbb{R}$ is continuous and $u_{x}(0)$ does not exist.  Let $p^{+} = \limsup_{x \to 0^{-}} \frac{u(x) - u(0)}{x}$ and $p^{-} = \liminf_{x \to 0^{-}} \frac{u(x) - u(0)}{x}$.  If $p \in (p^{-},p^{+})$, then there is a sequence $(x_{n}^{+})_{n \in \mathbb{N}} \subseteq (-\infty,0)$ such that 
\begin{itemize}
\item[(a)] $p \in J^{+}u(x_{n}^{+})$ for all $n \in \mathbb{N}$
\item[(b)] $\lim_{n \to \infty} x_{n}^{+} = 0$ 
\item[(c)] $\lim_{n \to \infty} u(x_{n}^{+}) = u(0)$
\end{itemize}

Similarly, suppose $v : (-\infty,0] \to \mathbb{R}$ is continuous and $v_{x}(0)$ does not exist.  Let $q^{+} = \limsup_{x \to 0^{-}} \frac{v(x) - v(0)}{x}$ and $q^{-} = \liminf_{x \to 0^{-}} \frac{v(x) - v(0)}{x}$.  If $q \in (q^{-},q^{+})$, then there is a sequence $(x_{n}^{-})_{n \in \mathbb{N}} \subseteq (-\infty,0)$ such that
\begin{itemize}
\item[(a)] $q \in J^{-}v(x_{n}^{-})$ for all $n \in \mathbb{N}$
\item[(b)] $\lim_{n \to \infty} x_{n}^{-} = 0$
\item[(c)] $\lim_{n \to \infty} v(x_{n}^{-}) = v(0)$
\end{itemize}
\end{lemma}

\begin{proof}  We only provide the arguments in the upper semi-continuous case since the lower semi-continuous case follows by a transformation as in the previous lemma.  

First, observe that since $p^{+} > p^{-}$, $u$ crosses the line $x \mapsto px$ infinitely often as $x \to 0^{-}$.  Therefore, there is a sequence $(y_{n})_{n \in \mathbb{N}} \subseteq (-\infty,0)$ such that 
\begin{itemize}
\item[(i)] $y_{n} < y_{n + 1}$ for all $n \in \mathbb{N}$,
\item[(ii)] $\lim_{n \to \infty} y_{n} = 0$, 
\item[(iii)] $\frac{u(y_{n}) - u(0)}{y_{n}} \leq p$ for all $n \in \mathbb{N}$, and
\item[(iv)] For all $n \in \mathbb{N}$, $y \mapsto u(y) - u(0) - py$ has a positive maximum in $[y_{n},y_{n + 1}]$.
\end{itemize}
For each $n \in \mathbb{N}$, let $x_{n}^{+}$ be a point in $[y_{n},y_{n + 1}]$ where $y \mapsto u(y) - u(0) - py$ is maximized.  Notice that (iii) and (iv) imply $x_{n}^{+} \in (y_{n},y_{n + 1})$.  Therefore, $p \in J^{+}u(x_{n}^{+})$.  Moreover, $\lim_{n \to \infty} x_{n}^{+} = 0$, and, thus, by assumption, $\lim_{n \to \infty} u(x_{n}^{+}) = u(0)$.    \end{proof}  

Finally, we have the ingredients necessary to establish Theorem \ref{T: comparison}.  For the sake of clarity, we begin by boiling Lemmas \ref{L: differentiable_structure} and \ref{L: oscillatory_structure} down into the form we will use in the proof.  

\begin{prop}  \label{P: hard_part}  Fix $i \in \{1,2,\dots,K\}$.  Suppose that $u : (-\infty,0] \to \mathbb{R}$ is a continuous sub-solution (resp.\ super-solution) of $u + H_{i}(x,u_{x}) = 0$ in $(-\infty,0)$.  Let $p^{+} = \limsup_{x \to 0^{-}} \frac{u(x) - u(0)}{x}$ and $p^{-} = \liminf_{x \to 0^{-}} \frac{u(x) - u(0)}{x}$.  If $p \in (p^{-},p^{+})$, then $u(0) + H_{i}(0,p) \leq 0$ (resp.\ $\geq 0$).

If $|p^{+}| < \infty$ (resp.\ $|p^{-}| < \infty$), then the conclusion holds with $p = p^{+}$ (resp.\ $p = p^{-}$) as well.  \end{prop}  

\begin{proof}  We only provide the arguments when $u$ is a sub-solution since the super-solution case follows in the same way.  

Fix $p \in (p^{-},p^{+})$.  Notice that Lemmas \ref{L: differentiable_structure} and \ref{L: oscillatory_structure} together imply that there is a sequence $(x_{n},p_{n})_{n \in \mathbb{N}} \subseteq (-\infty,0) \times \mathbb{R}$ such that $p_{n} \in J^{+}u(x_{n})$ independently of $n \in \mathbb{N}$ and $\lim_{n \to \infty} (x_{n},p_{n},u(x_{n})) = (0,p,u(0))$.  Since $x_{n} < 0$, we can invoke the sub-solution property to find
\begin{equation*}
u(x_{n}) + H_{i}(x_{n},p_{n}) \leq 0,
\end{equation*}
which, upon sending $n \to \infty$, becomes $u(0) + H_{i}(0,p) \leq 0$.

If $|p^{+}| < \infty$, then $u(0) + H_{i}(0,p^{+}) \leq 0$ follows from the continuity of $p \mapsto H_{i}(0,p)$.  The same can be said if $|p^{-}| < \infty$.       \end{proof}  

The proof of Theorem \ref{T: comparison} is now an application of Proposition \ref{P: hard_part} and Remark \ref{R: calculus}:

\begin{proof}[Proof of Theorem \ref{T: comparison}] We will only give the details for sub-solutions.  In addition to $(p_{1}^{+},\dots,p_{K}^{+})$, let us also define $(p_{1}^{-},\dots,p_{K}^{-})$ by 
	\begin{equation*}
		p_{i}^{-} = \liminf_{I_{i} \ni x \to 0} \frac{u(x) - u(0)}{x}.
	\end{equation*}  

Proposition \ref{P: hard_part} implies (i) directly.  Additionally, it shows that if $\tilde{p}_{i} \geq p_{i}^{-}$ for some $i \in \{1,2,\dots,K\}$, then \eqref{E: crucial_part} in (ii) holds.

It only remains to establish (ii) in the case when $\tilde{p}_{i} < p_{i}^{-}$ for all $i \in \{1,2,\dots,K\}$.  Remark \ref{R: calculus} implies that, in this case, if $\varphi : \mathcal{I} \to \mathbb{R}$ is given by 
\begin{equation*}
\varphi(x) = u(0) + \tilde{p}_{i}x \quad \text{if} \, \, x \in \overline{I_{i}},
\end{equation*} 
then $u - \varphi$ has a local maximum at $0$.  Therefore, since $u$ is a sub-solution, we find
\begin{equation*}
\min \left\{ \sum_{i = 1}^{K} \tilde{p}_{i}, u(0) + \min_{i} H_{i}(0,\tilde{p}_{i}) \right\} \leq 0.
\end{equation*}
Thus, \eqref{E: crucial_part} holds, as claimed.           
\end{proof}  

%
%
%
%
%


\section*{Acknowledgments}

It is a pleasure to acknowledge P.E.\ Souganidis for suggesting this problem and for enlightening discussions.  Credit is due as well to the anonymous reviewers for their sage advice and for pointing out a number of typos, and to M.\ Sardarli for helpful comments.

The author was partially supported by the National Science Foundation Research Training Group grant DMS-1246999.  


\end{document}